\documentclass{article}

\usepackage[T2C]{fontenc}
\usepackage[utf8]{inputenc}
\usepackage[english,russian]{babel}
\usepackage{amsfonts,amssymb,mathrsfs,amscd,longtable}
\usepackage{mathrsfs}
\usepackage[umnobzor]{sks2e-tru}
\usepackage[all]{xy}
\usepackage[most]{tcolorbox}
\usepackage{booktabs} 
\usepackage{mathtools}
\mathtoolsset{showonlyrefs}

\usepackage{algorithm}
\usepackage{algorithmic}

\usepackage{dsfont}

\def\dm{}
\def\dd{}

\usepackage{hyperref}

\usepackage[round]{natbib}
\renewcommand{\bibname}{References}

\usepackage{makecell}
\usepackage{pifont}

\def\bad{\spaceskip=0.33emplus0.6emminus0.15em\immediate\write5{\string\bad}}

\numberwithin{equation}{section}

\makeatletter

\def\PlotAlph#1{\expandafter\@PlotAlph\csname c@#1\endcsname}

\def\@PlotAlph#1{%
 \ifcase#1\or $\mathrm{A}1$\else \@ctrerr \fi
}%

\theoremstyle{plain}
\newtheorem{theorem}{Theorem}

\newtheorem{lemma}{Lemma}

\theoremstyle{definition}

\newtheorem{proof}{Proof}
\newtheorem{remark}{Remark}
\newtheorem{example}{Example}

\newcommand{\E}{\mathbb E}

\renewcommand{\epsilon}{\varepsilon}

\allowdisplaybreaks

\usepackage{url}
\usepackage{algorithm}
\usepackage{algorithmic}
\usepackage{tabularx}
\usepackage{paralist}
\usepackage{mathtools}

\usepackage{bbm} 

\usepackage{makecell}
\usepackage{multirow}
\usepackage{booktabs}

\usepackage{nicefrac}       

\usepackage[flushleft]{threeparttable} 

\usepackage{multirow}
\usepackage{colortbl}
\definecolor{bgcolor}{rgb}{0.8,1,1}
\definecolor{bgcolor2}{rgb}{0.8,1,0.8}
\definecolor{niceblue}{rgb}{0.0,0.19,0.56}

\usepackage{hyperref}
\usepackage{sidecap}
\usepackage{pifont}

\usepackage{subfigure}

\newcommand{\R}{\mathbb{R}}

\def\<#1,#2>{\left\langle #1,#2\right\rangle}

\usepackage{xspace}

\providecommand{\nsubset}{\not\subset}

\newcommand{\argmin}{\mathop{\arg\!\min}}

\newcommand{\cD}{{\cal D}}

\newcommand{\cO}{{\cal O}}

\newcommand{\EE}{\mathbb{E}}

\begin{document}

\title{Stochastic Optimization and Data Science}
\author[A.~Avetisyan et al.]{A.~Avetisyan, D.~ Dvinskikh, A.~Gasnikov,  V.~Temlyakov, N.~Tupitsa, D.~Turdakov}
\address{Университет Иннополис, Высшая школа экономики, Инстит системного программирования РАН им. В.П. Иванникова, Математический институт РАН им. В.А. Стеклова, Сколтех, МФТИ}


\date{27.03.2026}
\udk{517.538}

\maketitle

\begin{fulltext}

\begin{abstract}

This paper aims to motivate stochastic optimization problems 
\dm{from a statistical perspective and a statistical learning perspective,}
where the goal is to maximize \dm{the} log-likelihood or minimize \dm{the} population risk. We briefly describe \dm{the} two main approaches\dm{: 
\textit{offline} (Monte Carlo / Sample Average Approximation) and   \textit{online} (Stochastic Approximation) approaches -- to solve the}
expectation minimization problems. 

\end{abstract}

\begin{keywords}
stochastic optimization, statistical learning, sample average approximation, stochastic approximation, empirical risk minimization, population risk minimization, maximum likelihood estimation, stochastic gradient methods, sample complexity, high-probability guarantees, Bayesian estimation, regularization
\end{keywords}

\markright{Stochastic optimization and Data science}

\footnotetext[0]{ This work was supported by a grant, provided by the Ministry of Economic Development of the Russian Federation (agreement  dated June 20, 2025 No. 139-15-2025-011, identifier 000000C313925P4G0002).
}

\section{Introduction}
This survey is devoted to solving stochastic optimization problems. Such problems play a crucial role in modern data analysis. One could even go so far as to say that data analysis in its modern sense essentially involves deriving and solving corresponding stochastic optimization problems
\begin{equation}\label{eq:SOP}
     \min\limits_{x\in Q \subseteq \R^n}\left\{f(x): = \EE_{\xi\sim \cD}[f(x,\xi)]\right\}, 
\end{equation}
 where function $f$ is convex in $x$ ($x\in Q,$ $Q$ is a convex set), and $\E_{\xi\sim \cD} [f(x,\xi)]$ is the expectation of $f$ with respect to random variable $\xi$ with unknown distribution $\cD$.

The first part of this review explains how stochastic optimization problems arise in mathematical statistics and machine learning.
Further, two alternative approaches are proposed for solving these stochastic optimization problems: \textit{offline} Monte Carlo / Sample Average Approximation and   \textit{online} Stochastic Approximation approaches.
Each approach for convex problem formulations is quite well developed. We notice that the crucial results in online setup were obtained in outstanding book \cite{nemirovski1983problem}, see also \cite{nemirovski2009robust} and in offline approach
were obtained in the last two decades, see e.g.,
\cite{shapiro2005complexity,shalev2009stochastic}.
The last part of the survey devoted to modern mathematically strong results that improve optimal rates from the previous parts under additional assumptions.

\section{Motivation to stochastic optimization}\label{Sec:StocMotiv}
\dm{According to} \cite{shapiro2021lectures}, 
 <<Optimization problems involving stochastic models occur in almost all areas of science and engineering, so diverse as telecommunication, medicine, or finance, to name just a few. This stimulates the interest in rigorous ways of formulating, analyzing, and solving such problems. Due to the presence of random parameters in the model, the theory combines
concepts of the optimization theory, the theory of probability and statistics, and functional
analysis. Moreover, in the recent years the theory and methods of stochastic programming have
undergone major advances.>> 
This <<major advances>> are strongly stimulated by the
\dm{explosion}
of interest 
\dm{in}
\textit{Data science} problems. In the last decade, 
\dm{several good books have appeared on the relationship between}
\textit{Stochastic Optimization} and \textit{Data Science} \cite{shapiro2021lectures,shalev2014understanding,bach2021learning}. In this section, we briefly describe \dm{the} two main 
\dm{origins}
of stochastic optimization problems in Data Science: 1) 
\dm{\textit{Statistical} origin}
(\textit{maximum likelihood estimation}) and 2) \textit{Machine Learning} 
\dm{origin} (\textit{stochastic gradient descent} and \textit{regularized expected risk minimization}).

\subsection{Statistical motivation}\label{Sec:StatisticsMotiv}
We start with the 
\dm{simplest example}.
Let $x^\star \in \R$ be an unknown scalar parameter,  $\eta \sim \mathcal{N}\left(0,\sigma^2\right)$ \dm{be} Gaussian noise. Assume that we can measure
\begin{center}
$\xi^k = x^\star + \eta^k$, $k=1,...,N$,
\end{center}
where $\eta^k$ \dm{are} i.i.d. (independent identically distributed as $\eta$). \textbf{The goal is to estimate $x^\star$ from $\left\{\xi^k\right\}_{k=1}^N$.}

The main observation is the following: $x^\star$ is a solution of the stochastic optimization problem
\begin{equation}
    \min\limits_{x\in \R} \EE_{\xi}\left[f(x,\xi):=(\xi - x)^2\right], \label{eq:exp_minimization_2}
\end{equation}
where $\xi \sim \mathcal{N}\left(x^\star,\sigma^2\right)$. Indeed, 
$$\EE_{\xi}(\xi - x)^2 = \EE_{\xi} \xi^2 - 2x\EE_{\xi} \xi + x^2 = (x^\star)^2 + \sigma^2 - 2xx^\star + x^2 = (x^\star - x)^2 +\sigma^2$$
attains minimum in $x = x^\star$. 
\dm{However,} $x^\star$ \dm{is unknown} (and \dm{probably} $\sigma^2$). 
\dm{How  problem \eqref{eq:exp_minimization_2} can be solved?}
Since $\left\{\xi^k\right\}_{k=1}^N$ 
\dm{are} available, 
\dm{the} Monte Carlo approach \dm{can be employed}. This approach consists in  replacing  problem \eqref{eq:exp_minimization_2} by \dm{its} empirical version
\begin{equation}
    \min\limits_{x\in \R} \frac{1}{N}\sum_{k=1}^N (\xi^k - x)^2. \label{eq:exp_minimization_3}
\end{equation}
\dm{The solution to the problem \eqref{eq:exp_minimization_3} can be easily provided}
\begin{equation}
\bar{x}^N = \frac{1}{N}\sum_{k=1}^N \xi^k.
\label{eq:bar_x}
\end{equation}
In statistics, \dm{this  average} is known  as \dm{the} 
\dm{\textit{sample mean},}
which is the best know\dm{n} (unbiased and with the smallest variance, see Theorem~\ref{Th:leCam1} below) estimate for the unknown parameter in the described parametric model, see Theorem~\ref{Th:leCam1} 
\dm{hereinafter}.

The 
solution \eqref{eq:bar_x} \dm{can} be
\dm{also}
obtained by \dm{the following} online procedure ($x^0=0$)
\begin{equation}
    x^{k+1} =x^k - \frac{1}{2k}\nabla_x f(x^k,\xi^k) = x^k - \frac{1}{k}(x^k - \xi^k), \qquad k=1,...,N,
\end{equation}
where $\nabla_x f (x,\cdot)$ is the gradient of $f$ with respect to $x$.
This procedure corresponds to the \textit{Stochastic Gradient Descent} (SGD) for  $2$-strongly convex in \dm{the}  $\ell_2$-norm stochastic optimization problem \eqref{eq:exp_minimization_2}.

\dm{A natural question arising here:}
by what scheme was $f(x,\xi)$ selected in \eqref{eq:exp_minimization_2}? Probably there are many ways to choose $f(x,\xi)$. If so, what is the <<best>> way \dm{to do it}?  
\dm{Further,}
we briefly describe the basics of \dm{the} maximum likelihood theory, which allows us to
answer  these questions. 

Assume that some random variable $\xi$ depends on \dm{an} unknown vector of parameters $x^\star \in \R^n$. Let $p(x,\xi)$ be \dm{the} probability (probability density function) that we observe $\xi$ if the true vector of parameters is $x \in \R^n$. In the 
\dm{aforementioned}
example\dm{,} $n=1$ and probability density function was
$$p(x,\xi) = \frac{1}{\sqrt{2\pi\sigma^2}}\exp\left\{-\frac{(\xi - x)^2}{2\sigma^2}\right\}.$$
If i.i.d. samples $\left\{\xi^k\right\}_{k=1}^N$ are available let us introduce \dm{the} likelihood $$p\left(x,\left\{\xi^k\right\}_{k=1}^N\right) = \prod_{k=1}^N p(x,\xi^k).$$
Perhaps\dm{,} one of the most productive ideas in statistics is to estimate \dm{the} true vector of parameters $x^\star$ as a vector that maximize\dm{s} likelihood $p\left(x,\left\{\xi^k\right\}_{k=1}^N\right)$. This problem can be equivalently reformulated as minimization of (normalized) negative log-likelihood
\begin{equation}\label{eq:MLE_N}
    \min\limits_{x\in \R^n} \left[-\frac{1}{N}\log p\left(x,\left\{\xi^k\right\}_{k=1}^N\right) = - \frac{1}{N}\sum_{k=1}^N \log p(x,\xi^k)\right].
\end{equation}
This minimization problem can be considered as \dm{the} empirical (sometimes called \textit{Monte Carlo}) version of \dm{the} stochastic optimization problem 
\begin{equation}
\min\limits_{x\in \R^n} \EE_{\xi} \left[-\log p(x,\xi)\right].
\label{eq:x_star}
\end{equation}
In particular, for
\dm{the aforementioned}
Gaussian model\dm{,} problem \eqref{eq:x_star} has the following form  
$$\min\limits_{x\in \R} \EE_{\xi} \left[\frac{1}{2\sigma^2}(\xi - x)^2 + \frac{1}{2}\log\left(2\pi\sigma^2\right)\right],$$
which is equivalent to \eqref{eq:exp_minimization_2}.

Moreover, the observation that the true value of unknown vector of parameters $x^\star$ is a solution of \eqref{eq:x_star} holds 
in the general case, i.e.\dm{,}
$$x^\star \in \text{Arg}\min\limits_{x\in \R^n} \EE_{\xi} \left[-\log p(x,\xi)\right],$$
where $\text{Arg}\min$ means the set of all $\arg\min$.
Indeed,\footnote{For certainty, here $p(x,\xi)$ is assumed  to be a probability density function.} 
$$\EE_{\xi} \left[-\log p(x,\xi)\right] = - \int p(x^\star,\xi) \log p(x,\xi) d\xi \ge - \int p(x^\star,\xi) \log p(x^\star,\xi) d\xi$$
since (\textit{the Jensen's inequality} for \dm{the} entropy)
$$KL\left(p(x^\star,\cdot),p(x,\cdot)\right) = \int p(x^\star,\xi) \log \left(\frac{p(x^\star,\xi)}{p(x,\xi)}\right) d\xi \ge 0$$ 
and $KL\left(p(x^\star,\cdot),p(x,\cdot)\right) = 0$, when $x = x^\star$. Here $KL$ is the Kullback-Leibler divergence.

\dm{Thus}, we 
explained that in the general case $f(x,\xi):= -\log p(x,\xi)$ in \eqref{eq:exp_minimization_2} and \dm{the} maximum likelihood approach is nothing more than \dm{the} Monte Carlo approach for Stochastic optimization problem \eqref{eq:x_star}. 

Definitely the main gem of statistics is 
the
theorem about asymptotic properties of the \textit{maximum likelihood estimation} (MLE) (see also \eqref{eq:MLE_N} -- factor $1/N$ does not matter)
\begin{equation}\label{eq:MLE}
\hat{x}_{MLE}^N = \arg\max\limits_{x\in \R^n} p\left(x,\left\{\xi^k\right\}_{k=1}^N\right) = \arg\min\limits_{x\in \R^n} \left[- \log p\left(x,\left\{\xi^k\right\}_{k=1}^N\right)\right].
\end{equation}
\dm{The next theorem presents an} informal variant of this theorem.
\begin{theorem}\label{Th:leCam1}
Assume that $p(x,\xi)$ is sufficiently smooth and the set $$\left\{\xi: p(x,\xi)>0 \right\}$$ does not depend on $x$.\footnote{This is \dm{satisfied} for Gaussian noise model $\xi = x + \eta$, but is not \dm{satisfied} if the noise $\eta$ is uniformly distributed on $[0,x]$. We emphasis, that this assumption is informal.} Then
\begin{enumerate}
    \item for all unbiased statistics $\tilde{x}^N\left(\left\{\xi^k\right\}_{k=1}^N\right)$ with finite second moment\dm{, the} \textit{Rao--Cramer inequality} holds\footnote{$A\succcurlyeq B$ means that for all $z\in\R^n$ $\langle z, (A - B)z\rangle \ge 0$.}
$$\EE_{\left\{\xi^k\right\}_{k=1}^N} \left[\left(\tilde{x}^N\left(\left\{\xi^k\right\}_{k=1}^N\right) - x^\star\right)\left(\tilde{x}^N\left(\left\{\xi^k\right\}_{k=1}^N\right) - x^\star\right)^T\right] \succcurlyeq \left[NI_{x^\star}\right]^{-1},$$
where 
$$I_{x^\star} = \EE_{\xi}\left[\nabla_x \log p(x^\star,\xi)\left(\nabla_x \log p(x^\star,\xi)\right)^T \right]$$
\dm{is the} \textit{Fisher information matrix}.\footnote{Note that $KL\left(p(x^\star,\cdot),p(x^\star+y,\cdot)\right)\simeq \frac{1}{2}\langle y, I_{x^\star}y\rangle$.}

\item  MLE $\hat{x}_{MLE}^N\left(\left\{\xi^k\right\}_{k=1}^N\right)$ (see \eqref{eq:MLE}) has asymptotically\footnote{
Under assumption that $N\to\infty$.} normal (Gaussian) distribution $\mathcal{N}\left(x^\star,\left[NI_{x^\star}\right]^{-1}\right)$ and 
\dm{the} Rao--Cramer inequality \dm{turns into the equality}. 
\dm{This means that} MLE has \dm{the} asymptotically  smallest variance along all the directions and 
\dm{no matter what $x^\star$ is.}
\end{enumerate}
\end{theorem}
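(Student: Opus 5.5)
The statement is informal, so I would make the regularity assumptions explicit and use them freely. I will assume that $p(x,\xi)$ is differentiable (three times where needed) in $x$, that differentiation under the integral sign is allowed, that $I_{x^\star}\succ 0$, and, for part 2, that the MLE is consistent ($\hat{x}_{MLE}^N\to x^\star$ in probability). The support assumption is what makes this work: since $\{\xi: p(x,\xi)>0\}$ does not depend on $x$, we may differentiate $\int p(x,\xi)\,d\xi=1$ in $x$. This gives the zero-mean property of the score
$$s(x,\xi):=\nabla_x\log p(x,\xi),\qquad \EE_\xi\, s(x^\star,\xi)=0 .$$
Differentiating once more gives the information identity $I_{x^\star}=-\EE_\xi \nabla^2_x\log p(x^\star,\xi)$.

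\textbf{Part 1 (Rao--Cramer).} Let $S_N=\sum_{k=1}^N s(x^\star,\xi^k)$. By independence and the zero-mean property, $\EE S_N S_N^T=NI_{x^\star}$.

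Unbiasedness means $\int \tilde{x}^N \prod_k p(x,\xi^k)\,d\xi = x$ for all $x$. Differentiating at $x^\star$ (same support, so no boundary terms) gives $\EE[\tilde{x}^N S_N^T]=E$, the identity matrix. Since $\EE S_N=0$, this is the same as $\EE[(\tilde{x}^N-x^\star)S_N^T]=E$.

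Now apply the matrix Cauchy--Schwarz inequality. For any $z$, set $w=(NI_{x^\star})^{-1}z$. Then
$$0\le \EE\bigl\langle z,\tilde{x}^N-x^\star\bigr\rangle^2-2\langle z,w\rangle+\langle w,NI_{x^\star}w\rangle .$$
This follows from expanding $\EE\bigl(\langle z,\tilde{x}^N-x^\star\rangle-\langle w,S_N\rangle\bigr)^2\ge 0$. With this choice of $w$ the right-hand side becomes $\EE\langle z,\tilde{x}^N-x^\star\rangle^2-\langle z,(NI_{x^\star})^{-1}z\rangle$. Hence the covariance matrix is $\succcurlyeq [NI_{x^\star}]^{-1}$.

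\textbf{Part 2 (asymptotic normality).} I would Taylor-expand the first-order condition $\sum_k s(\hat{x}_{MLE}^N,\xi^k)=0$ around $x^\star$:
$$0=\frac{1}{\sqrt N}S_N+\Bigl(\frac1N\sum_{k}\nabla^2_x\log p(\bar{x}^N,\xi^k)\Bigr)\sqrt N\,(\hat{x}_{MLE}^N-x^\star),$$
with intermediate points $\bar{x}^N$ taken row-wise. The terms are handled as follows.

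\begin{enumerate}
\item By the CLT, $S_N/\sqrt N\Rightarrow\mathcal N(0,I_{x^\star})$.
\item By the law of large numbers and the information identity, $\frac1N\sum_k\nabla^2_x\log p(x^\star,\xi^k)\to -I_{x^\star}$.
\item The matrix in the expansion is evaluated at $\bar{x}^N$, not $x^\star$. Replacing one by the other costs $O(\|\bar{x}^N-x^\star\|)$ times an integrable envelope for the third derivatives. By consistency this error is $o_P(1)$.
\end{enumerate}

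Slutsky's lemma then gives $\sqrt N(\hat{x}_{MLE}^N-x^\star)\Rightarrow I_{x^\star}^{-1}\mathcal N(0,I_{x^\star})=\mathcal N(0,I_{x^\star}^{-1})$. This is the claimed law with covariance $[NI_{x^\star}]^{-1}$, so the bound of part 1 is attained asymptotically. The optimality "along all directions" is just the $\succcurlyeq$ ordering read off coordinate-wise via $z$.

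\textbf{Main obstacle.} The hard step is justifying consistency of the MLE, together with the uniform control of the Hessian near $x^\star$ in item 3. I would first try a Wald-type argument. By the KL computation preceding the theorem, $x^\star$ is the unique minimizer of $\EE_\xi[-\log p(x,\xi)]$, assuming identifiability. A uniform law of large numbers on a compact neighborhood then forces the empirical minimizer to converge to $x^\star$. Since the statement is informal, I would state consistency and the dominating envelope explicitly as part of the "sufficiently smooth" hypothesis rather than prove them in full.
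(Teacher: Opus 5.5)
The paper gives no proof of Theorem~\ref{Th:leCam1}. It is stated as an informal summary of classical facts, and the reader is referred to \cite{ibragimov2013statistical,spokoiny2015basics}. Your argument is the standard Cram\'er-type proof, and it is correct under the regularity you make explicit: differentiation under the integral, $I_{x^\star}\succ 0$, a third-derivative envelope, and consistency.
\begin{itemize}
\item Differentiating the unbiasedness identity gives $\EE[(\tilde{x}^N-x^\star)S_N^T]$ equal to the identity matrix.
\item The choice $w=(NI_{x^\star})^{-1}z$ in $\EE\bigl(\langle z,\tilde{x}^N-x^\star\rangle-\langle w,S_N\rangle\bigr)^2\ge 0$ then yields exactly the matrix inequality of Part 1.
\item The row-wise mean-value expansion of the likelihood equation, combined with the CLT, the LLN and Slutsky, gives $\sqrt{N}(\hat{x}_{MLE}^N-x^\star)\Rightarrow\mathcal{N}(0,I_{x^\star}^{-1})$.
\end{itemize}

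What this route does not deliver is the last sentence of Part 2. The likelihood-ratio (local asymptotic normality) theory of the cited references does cover it.
\begin{itemize}
\item \textbf{Moments.} You obtain convergence in law, pointwise in $x^\star$. The MLE is in general biased, so Part 1 does not apply to it. Without uniform integrability you also cannot conclude $N\,\EE[(\hat{x}_{MLE}^N-x^\star)(\hat{x}_{MLE}^N-x^\star)^T]\to I_{x^\star}^{-1}$. So in your proof, \emph{the Rao--Cramer inequality turns into equality} is a statement about the limit law only. The method of \cite{ibragimov2013statistical} does give convergence of moments, uniformly in $x^\star$ on compacts.
\item \textbf{Optimality.} More importantly, \emph{asymptotically smallest variance no matter what $x^\star$ is} does not follow from Part 1 plus pointwise asymptotic normality. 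In the Gaussian model, take the sample mean \eqref{eq:bar_x} and replace it by $0$ whenever its absolute value is below $N^{-1/4}$ (Hodges' estimator). Its limiting variance is $\sigma^2=I_{x^\star}^{-1}$ for $x^\star\neq 0$ but $0$ at $x^\star=0$. The rigorous form of the claim is H\'ajek's convolution and local asymptotic minimax theorems, which use uniformity over $1/\sqrt{N}$-neighbourhoods of $x^\star$. Reading optimality off the $\succcurlyeq$ ordering of Part 1 is only legitimate against unbiased competitors.
\item \textbf{Consistency.} A ULLN on a compact neighbourhood controls only the minimiser over that neighbourhood, not the global MLE on $\R^n$. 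You can close this in one of three ways:
\begin{itemize}
\item add a Wald-type condition at infinity;
\item use convexity of $x\mapsto-\log p(x,\xi)$: if the empirical objective exceeds its value at $x^\star$ on a small sphere, the global minimiser is trapped inside (the same kind of segment argument as in the proof of Theorem~\ref{Th:ShapiroNonConvex2});
\item settle for Cram\'er's consistent root of the likelihood equation.
\end{itemize}
\end{itemize}
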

The remark about unbiased statistics is significant, see e.g. James--Stein biased statistic    \cite{james1961estimation}. 

As a consequence of this theorem\dm{,} 
\dm{the} asymptotically  smallest confiden\dm{ce} set around the MLE \dm{can be constructed}. 
The online approach (based on \dm{the} SGD, proper stepsize policy and \dm{the} \textit{Polyak--Juditsky--Ruppert averaging}) leads to a similar asymptotic result. 

Unfortunately, \dm{the} asymptotic theory does not fully characterize the real state of affairs when $N$ is not sufficiently large. 
\dm{Indeed, let us}
consider the Bernoullie parametric model (coin flip\dm{ping}) with likelihood $p(x,\xi) = x^{\xi}(1-x)^{1-\xi}$ and $x^\star > 0$ small enough. Then while $N\lesssim 1/x^\star$ with positive probability, for the MLE $\hat{x}^N = 0$ \cite{shalev2014understanding}. Hence $\EE_{\xi} \left[-\log p(0,\xi)\right] = \infty$ is not well defined.

Modern offline asymptotic theory of statistics \cite{ibragimov2013statistical} (le Cam's theory) was further developed in \dm{a} partially non\dm{-}asymptotic and misspecification\footnote{If the parametric model is wrong, MLE \dm{can} be interpreted as \dm{the} asymptotically  best way to estimate the KL-projection of the true vector of parameters on the parametric model.} directions, see e.g.  \cite{spokoiny2012parametric}. In this book, we mainly (except the next section) concentrate on non\dm{-}asymptotic online approaches for \eqref{eq:x_star} and more general problem formulations.

At the end of this section, we aim to demonstrate the role of regularization in \dm{the} offline approach as a Bayesian prior. Assume that in the general scheme, which described by the parametric model $p(x,\xi)$, we have an additional information about vector of parameters $x$: $x$ is a random vector that was  priory independently generated from the distribution with density function $\pi(x)$.

A Bayesian estimator is an estimator that minimize\dm{s} the posterior expected value of \dm{the} loss function (we consider quadratic loss), which coincides with a posterior mean
\begin{equation}\label{eq:bayesian_estimator}
\hat{x}_B^N = \arg\min\limits_{x\in \R^n} \int\limits_{\R^n} \|x - z\|_2^2  p\left(z,\left\{\xi^k\right\}_{k=1}^N\right)\pi(z)dz =  \int\limits_{\R^n} x \frac{p\left(x,\left\{\xi^k\right\}_{k=1}^N\right)\pi(x)}{\int\limits_{\R^n} p\left(y,\left\{\xi^k\right\}_{k=1}^N\right)\pi(y)dy}dx.
\end{equation}
\dm{The next theorem presents an informal analogue of Theorem~\ref{Th:leCam1} in this case.}
\begin{theorem}\label{Th:leCam2}
Assume that $p(x,\xi)$ and  $\pi(x)$ are sufficiently smooth and the set $$\left\{\xi: p(x,\xi)>0 \right\}$$ does not depend on $x$. Then

\begin{enumerate}[\hspace{1pt}(1)]
    \item for all  statistics $\tilde{x}^N\left(\left\{\xi^k\right\}_{k=1}^N\right)$ with finite second moment\dm{, the} \textit{van Trees inequality} holds 
$$\EE_{\left(x,\left\{\xi^k\right\}_{k=1}^N\right)} \left[\left(\tilde{x}^N\left(\left\{\xi^k\right\}_{k=1}^N\right) - x\right)\left(\tilde{x}^N\left(\left\{\xi^k\right\}_{k=1}^N\right) - x\right)^T\right] \succcurlyeq \left[NI_p + I_{\pi}\right]^{-1},$$
where 
$$I_{p} = \EE_{\left(x,\xi\right)}\left[\nabla_x \log p(x,\xi)\left(\nabla_x \log p(x,\xi)\right)^T \right]$$
\dm{is the} \textit{Fisher information matrix} and
$$I_{\pi} = \EE_{x}\left[\nabla \log \pi(x)\left(\nabla \log \pi(x)\right)^T \right].$$

    \item  Bayesian estimator  $\hat{x}_B^N\left(\left\{\xi^k\right\}_{k=1}^N\right)$ (see \eqref{eq:bayesian_estimator}) has conditional (with a priori drawing $x = x^\star$) asymptotically normal distribution $\mathcal{N}\left(x^\star,\left[NI_{x^\star}\right]^{-1}\right)$, where $I_{x^\star}$ was introduced in Theorem~\ref{Th:leCam1}.
\end{enumerate}
\end{theorem}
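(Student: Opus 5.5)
For part (1) I would follow the classical van Trees (Bayesian Cramér--Rao) argument. The idea is to work on the joint density $q(x,\Xi) = p(x,\Xi)\pi(x)$, where $\Xi=\{\xi^k\}_{k=1}^N$ and $p(x,\Xi)=\prod_k p(x,\xi^k)$.

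\textbf{Score and integration by parts.} Introduce the joint score $s(x,\Xi)=\nabla_x \log q(x,\Xi) = \sum_{k=1}^N \nabla_x\log p(x,\xi^k) + \nabla\log\pi(x)$. The key identity is
$$\EE_{(x,\Xi)}\left[(\tilde x^N(\Xi)-x)\, s(x,\Xi)^T\right] = -I_n .$$
To get it, write the left-hand side as $\int\!\!\int (\tilde x^N(\Xi)-x)\,\nabla_x q(x,\Xi)^T\,dx\,d\Xi$ and integrate by parts in $x$. The boundary terms vanish if $\pi$ (hence $q$) decays at the boundary of its support; this is the informal smoothness assumption, made precise as: $\pi$ vanishes at infinity or on the boundary, and differentiation under the integral is allowed. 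The term containing $\tilde x^N(\Xi)$ drops out because $\tilde x^N$ does not depend on $x$. The remaining term is $-\int\!\!\int q\,\nabla_x x^T = -I_n$. Note that no unbiasedness is needed, since the prior supplies the boundary decay. This is exactly why the statement holds for all statistics, unlike Theorem~\ref{Th:leCam1}.

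\textbf{Cauchy--Schwarz.} Next, apply the matrix Cauchy--Schwarz inequality. For $e=\tilde x^N-x$ and any $z\in\R^n$, set $a=\langle u,e\rangle$ and $b=\langle z,s\rangle$. The identity gives $\EE[ab] = -\langle u, z\rangle$, so
$$\langle u,z\rangle^2\le \langle u,\EE[ee^T]u\rangle\,\langle z,\EE[ss^T]z\rangle .$$
Choosing $z=\EE[ss^T]^{-1}u$ yields $\EE[ee^T]\succcurlyeq \EE[ss^T]^{-1}$.

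\textbf{Computing $\EE[ss^T]$.} Expand the square of the sum. The cross terms vanish by conditioning on $x$: given $x$, each $\nabla_x\log p(x,\xi^k)$ has zero mean, because $\int\nabla_x p(x,\xi)d\xi=\nabla_x 1=0$. Here the assumption that the support $\{\xi:p>0\}$ does not depend on $x$ is what allows differentiation under the integral. The $\xi^k$ are conditionally i.i.d., and $\nabla\log\pi(x)$ is $x$-measurable, so
$$\EE[ss^T]=N I_p + I_\pi .$$

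For part (2) I would use the Bernstein--von Mises theorem. Condition on $x=x^\star$ and expand $\log p(x,\Xi)$ to second order (LAN) around the MLE $\hat x^N_{MLE}$:
$$\log p(x,\Xi)\approx \log p(\hat x,\Xi)-\tfrac N2\langle x-\hat x, I_{x^\star}(x-\hat x)\rangle .$$
Since $\pi$ is smooth and positive near $x^\star$, it is asymptotically constant on the $N^{-1/2}$ scale. Hence the posterior is close in total variation to $\mathcal N(\hat x^N_{MLE},[NI_{x^\star}]^{-1})$. It follows that the posterior mean $\hat x^N_B$ satisfies $\sqrt N(\hat x_B^N-\hat x_{MLE}^N)\to0$ in probability. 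Combining this with part 2 of Theorem~\ref{Th:leCam1} via Slutsky's lemma gives the claimed asymptotic normality.

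The main obstacle is the Bernstein--von Mises step. One must control the posterior mass and the first moment far from $x^\star$, which requires a consistency or identifiability condition (e.g.\ a uniformly positive KL gap away from $x^\star$) and enough integrability to pass from total-variation convergence to convergence of the mean. I would handle this by splitting $\R^n$ into a ball of radius $C\sqrt{\log N/N}$ around $\hat x$, where the Laplace approximation is applied, and its complement, where an exponential bound on the likelihood ratio together with a finite prior moment shows that the contribution to the mean is negligible.
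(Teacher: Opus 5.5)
The paper gives no proof of this theorem. It is stated as an informal result, followed only by a pointer to the Bernstein--von Mises theorem and, in the historical notes, to Ibragimov--Has'minskii and Spokoiny.

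Your proposal is the standard argument and it is correct. Part (1) is the classical van Trees proof: joint score, integration by parts in $x$, matrix Cauchy--Schwarz, and $\EE[ss^T]=NI_p+I_\pi$ via conditionally zero-mean scores. Part (2) follows the Bernstein--von Mises route the paper itself alludes to. You correctly locate the real work in upgrading total-variation convergence of the posterior to convergence of its mean.

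There is one sign slip. Entrywise, $\int(\tilde x^N_i-x_i)\,\partial_{x_j}q\,dx=-\int\partial_{x_j}(\tilde x^N_i-x_i)\,q\,dx=\delta_{ij}\int q\,dx$, so the identity is $\EE[(\tilde x^N-x)s^T]=+I_n$. You dropped the minus sign from integration by parts. This is harmless, since only $(\EE[ab])^2$ enters the Cauchy--Schwarz step.

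The boundary terms also need less than you assume: no decay condition beyond absolute continuity of $q$ in $x$ on all of $\R^n$ (so a compactly supported prior must vanish on the boundary of its support). Finite second moments and finite $I_p$, $I_\pi$ give $\EE[\|\tilde x^N-x\|\,\|s\|]<\infty$. Integrability of $(\tilde x^N-x)q$ together with its $x$-derivative then forces the boundary contributions to vanish.

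For part (2), your argument inherits the asymptotic normality of the MLE from Theorem~\ref{Th:leCam1}, which the paper also leaves unproved. The Ibragimov--Has'minskii treatment that the paper cites works instead with the normalized likelihood ratio $Z_N(u)=p(x^\star+u/\sqrt N,\Xi)/p(x^\star,\Xi)$. It writes $\sqrt N(\hat x^N_B-x^\star)$ as the ratio of $\int u\,Z_N(u)\,\pi(x^\star+u/\sqrt N)\,du$ to $\int Z_N(u)\,\pi(x^\star+u/\sqrt N)\,du$. By LAN this converges to $I_{x^\star}^{-1}\Delta$ with $\Delta\sim\mathcal N(0,I_{x^\star})$. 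That route never needs the MLE to exist or be consistent. Yours additionally gives the full Bernstein--von Mises statement about the whole posterior, at the cost of the extra first-moment tail control you describe.
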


A close result is contained in the \textit{Bernstein–von Mises theorem}: a posterior distribution has asymptotically normal distribution centered at the MLE with covariance matrix $[NI_{x^\star}]^{-1}$.

In Bayesian statistics\dm{,} a \textit{maximum a posterior estimation} (MAP)
\begin{align*}
    \hat{x}_{MAP}^N &= \arg\max\limits_{x\in \R^n}  p\left(x,\left\{\xi^k\right\}_{k=1}^N\right)\pi(x) \\
    &=  \arg\min\limits_{x\in \R^n} \left[- \log p\left(x,\left\{\xi^k\right\}_{k=1}^N\right) - \log \pi(x)\right].
\end{align*}
plays also an important role. The MAP has typically the same asymptotic behavior as Bayesian estimator. 

Let us consider several examples. The first example is \textit{Regularized Least Squares}.
\begin{example}[Ridge Regression and LASSO]
Let  $x^\star \in \R^n$ be an unknown vector of parameters and  $\eta \sim \mathcal{N}\left(0,\sigma^2\right)$ \dm{be} Gaussian noise. Assume that we can measure
\[
\xi^k = \langle a_k, x^\star\rangle  + \eta^k, \qquad k=1,...,N,
\]
where $\eta^k$ \dm{are} i.i.d. (independent identically distributed as $\eta$) and matrix $A = \left[a_1,...,a_N\right]^T$ is known.\footnote{Note that $a_k$ can also be generated randomly. 
\dm{In this case, to preserve the results it is sufficient to require  that $\left\{a_k\right\}_{k=1}^n$ and $\left\{\eta^k\right\}_{k=1}^n$ are independent.}
}
\textbf{The goal is to estimate $x^\star$ from $\xi:=\left\{\xi^k\right\}_{k=1}^N$.} 
Simple calculations lead to the following formulas\footnote{We assume that parameters $\sigma^2$, $\sigma^2_{\pi}$, $\lambda$ are known.}
\begin{align*}
    \hat{x}_{MLE}^N &= \arg\min\limits_{x\in \R^n}\left[ \frac{1}{2\sigma^2}\|Ax - \xi\|_2^2\right],\\
    \hat{x}_{B}^N = \hat{x}_{MAP}^N &=\arg\min\limits_{x\in \R^n} \left[ \frac{1}{2\sigma^2}\|Ax - \xi\|_2^2 + \frac{1}{2\sigma_{\pi}^2}\|x - \bar{x}\|_2^2\right],
\end{align*}
where a priory $x_i$, $i=1,...,n$ \dm{are} assumed to be independent and identically distributed according to $\mathcal{N}\left(\bar{x}_i,\sigma_{\pi}^2\right)$ (Ridge Regression) and  
$$\hat{x}_{MAP}^N =\arg\min\limits_{x\in \R^n} \left[\frac{1}{2\sigma^2}\|Ax - \xi\|_2^2 + \lambda\|x\|_1\right],$$
where the prior probability density is  (LASSO): $$\pi(x)= \prod_{i=1}^n \frac{\lambda}{2}\exp\{-\lambda|x_i|\} = \left(\frac{\lambda}{2}\right)^n\exp\left\{-\lambda\|x\|_1\right\}.$$ 

It is obvious that Bayesian estimator and the MAP asymptotically ($N\to\infty$) coincide with the MLE. Another important observation that Bayesian estimator and MAP asymptotically coincide with the MLE when $\sigma_{\pi}^2\to\infty$. Both of these observations take place in the general case.
So \textit{Bayesian prior} can be interpreted as a regularizer in Bayesian version of the maximum likelihood optimization problem.   

\end{example}

The second example goes back to Vadim V. Mottl.
\begin{example}[Soft-SVM]
In this example,  \textit{Soft-Support-Vector Machine} (Soft-SVM) is derived based on Bayesian inference with 
$$p\left(x,\xi^k:=\left(y^k,a_k\right)\right) \propto 
\begin{cases} 
1, \qquad \text{if } \quad y^k\langle x,a_k \rangle \ge 1
\\
\exp\left\{-\left(1 - y^k\langle x,a_k \rangle  \right)\right\},\quad \text{else},
\end{cases}$$
where $y^k \in \left\{-1,1\right\}$ and a prior $x_i$, $i=1,...,n$ \dm{are} assumed to be independent and identically distributed according to $\mathcal{N}\left(0,\sigma_{\pi}^2\right)$. Improper probability density function $p\left(x,\xi^k\right)$ has a natural interpretation: there exists <<true>> hyperplane (determined by the vector $x^\star$)  such that the data points with $y^k = 1$ lie mostly from the one side of this hyperplane and the data points with $y^k = -1$ lie mostly from the other side. The goal is to recognize this hyperplane from the data points having a prior information about $x^\star$. Simple calculations lead to the following formula
$$\hat{x}_{MAP}^N =\arg\min\limits_{x\in \R^n}\left[ \sum_{k=1}^N  \max\left\{0, 1 - y^k\langle x,a_k \rangle \right\}+\frac{1}{2\sigma_{\pi}^2}\|x\|_2^2\right].$$
\end{example}

\subsection{Machine Learning motivation}\label{sec:ML}
In the statistical approach\dm{,} the loss function is $f(x,\xi):= -\log p(x,\xi)$. It means that we require parametric model $p(x,\xi)$. In many practical situations\dm{,} $p(x,\xi)$ is not available. However in \textit{Regression problems} we can introduce \dm{the} \textit{least square loss function} $f\left(x,\xi:=(y,a)\right)=\left(y-\langle a,x \rangle \right)^2$.  Without any knowledge of probability nature of $\xi$\dm{,} we can consider \dm{the} expected loss minimization problem (stochastic optimization problem)
$$\min\limits_{x\in \R^n} \EE_{(y,a)}\left[\left(y-\langle a,x \rangle\right)^2\right].$$
In \dm{the} offline approach\dm{,} this problem has a form:
$$\min\limits_{x\in \R^n} \frac{1}{N}\|Y- Ax\|_2^2,$$
where $Y=\left(y^1,...,y^N\right)^T$, $A = \left[a_1,...,a_N\right]^T$.
Similarly, in \textit{Classification problems} we can introduce \dm{the} \textit{hinge-loss function} $f\left(x,\xi:=(y,a)\right)=\max\left\{0,1 - y\langle x,a\rangle \right\}$ and corresponding stochastic optimization problems has \dm{the following} form
$$\min\limits_{x\in \R^n} \EE_{(y,a)}\left[\max\left\{0,1 - y\langle x,a\rangle\right\}\right].$$
In many real world applications, we have some prior information about how much could be (should be) $x^\star$. Typically, this information \dm{is} formalize\dm{d} as a constraint of the type $x\in Q$, where $Q$ is often chosen as a ball $B_p^n(R_p)$ in $p$-norm ($p\ge 1$) centered at $0$ with radius $R_p$  or another convex compact set with simple structure, e.g. unit simplex $S_n(1)$. So the final stochastic optimization problem in general has the form \eqref{eq:SOP}\footnote{Here and everywhere below we will denote the solution of this problem as $x^\star$. If the solution is not unique $x^\star$ means one of the solutions, e.g. such that is the closest to the starting point (initial guess).}
For $Q=B_2^n(R_2)$ (or $Q=B_1^n(R_1)$) if the constraint is reached it can be replaced by $\|x\|_2^2$-regularization (or $\|x\|_1$-regularization) with Lagrange multiplayer as a regularization parameter.

All 
\dm{aforementioned}
problems (Regression and Classification) have two things in common. The target functions 
\begin{enumerate}[\hspace{1pt}(1)]
    \item are convex: for all $\xi$ and $x,z \in Q$
$$f(z,\xi)\ge f(x,\xi) + \langle \nabla_x f(x,\xi), z -x \rangle$$
and \textit{$M$-Lipschitz continuous} in $x$ in $2$-norm: for all $\xi$ and $x,z \in Q$
$$|f(z,\xi) - f(x,\xi)|\le M_2\|z-x\|_2.$$
\item have \textit{generalized linear} structure:
$$f(x,\xi):=g\left(y(\xi),\langle x, a(\xi)\rangle\right).$$
\end{enumerate}

The first common thing guarantees the effectiveness of \dm{the}  \textit{online} approach. Both of them guarantee the effectiveness of \dm{the} \textit{offline} approach. 

Let us start with \dm{the} \textit{offline} approach. We introduce the \textit{empirical loss}
$$\bar{f}(x):=\bar{f}\left(x,\left\{\xi^k\right\}_{k=1}^N\right) = \frac{1}{N}\sum_{k=1}^N f(x,\xi^k)$$
and \dm{its} minimizer 
$$\hat{x}^N \in \text{Arg}\min\limits_{x\in Q} \bar{f}\left(x,\left\{\xi^k\right\}_{k=1}^N\right).$$

\begin{theorem}[Learnability for generalized linear models]\label{Th:GLC}
Consider the stochastic optimization problem \eqref{eq:SOP} with $f(x,\xi)$ satisf\dm{ying}  \dm{the aforementioned conditions} 1 and 2 and convex $Q\subseteq B_2^n(R_2)$. 
Then with probability at least $1 - \beta$
\begin{equation*}
  \sup_{x\in Q}\left|\bar{f}(x) - f(x) \right| = \cO\left(M_2R_2\sqrt{\frac{\log\left(1/\beta\right)}{N}}\right).  
\end{equation*}
Hence, with probability at least $1 - \beta$ the following holds
\begin{equation}\label{eq:GLC}
f(x) - f(x^\star)  \le  \bar{f}(x) -  \bar{f}(\hat{x}^N) + \cO\left(M_2R_2\sqrt{\frac{\log\left(1/\beta\right)}{N}}\right).  
\end{equation}
If additionally for all $\xi$ and $x,z \in Q$\dm{,}
$$f(z,\xi)\ge f(x,\xi) + \langle \nabla_x f(x,\xi), z -x \rangle + \frac{\mu_2}{2}\|z-x\|_2^2,$$
i.e.\dm{,} $f(x,\xi)$ is $\mu_2$-strongly convex in $x$ in $2$-norm, then with probability at least $1 - \beta$
\begin{equation}\label{eq:GLSC}
  f(x) - f(x^\star)  \le  2\left(\bar{f}(x) -  \bar{f}(\hat{x}^N)\right) + \cO\left(\frac{M_2^2\log\left(1/\beta\right)}{\mu_2 N}\right).  
  \end{equation}
 If the 
 \dm{condition 2}
 is no longer met, then \eqref{eq:GLSC} should be rewritten as follows: with probability at least $1 - \beta$
 \begin{equation}\label{eq:GLSC2}
  f(x) - f(x^\star)  \le  \sqrt{\frac{2M_2^2}{\mu_2}\left(\bar{f}(x) -  \bar{f}(\hat{x}^N)\right)} + \tilde{\cO}\left(\frac{M_2^2\log\left(1/\beta\right)}{\mu_2 N}\right).
    \end{equation}
    Moreover, all these inequalities are optimal up to a constant factor. 
\end{theorem}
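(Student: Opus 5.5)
The uniform deviation bound will come from symmetrization and Rademacher complexity, using the generalized linear structure. First I apply McDiarmid's bounded-difference inequality to $\Phi=\sup_{x\in Q}|\bar f(x)-f(x)|$. Replacing one sample changes $\Phi$ by at most $\frac{1}{N}\sup_{x,\xi,\xi'}|f(x,\xi)-f(x,\xi')|$. I will treat this range as $\cO(M_2R_2)$, for example after centering each loss at a reference point $x_0\in Q$; this is a mild implicit assumption of the statement, since Lipschitzness alone does not bound $|f(x_0,\xi)-f(x_0,\xi')|$. McDiarmid then gives $\Phi\le \E\Phi+\cO\bigl(M_2R_2\sqrt{\log(1/\beta)/N}\bigr)$ with probability at least $1-\beta$. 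Symmetrization gives $\E\Phi\le 2\,\E\,\mathcal R_N$, where $\mathcal R_N$ is the Rademacher average of $\{\xi\mapsto g(y(\xi),\langle x,a(\xi)\rangle):x\in Q\}$.

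Next I bound $\mathcal R_N$. The map $t\mapsto g(y,t)$ is Lipschitz with constant $M_2/\|a(\xi)\|_2$, so by Talagrand's contraction lemma it suffices to control $\E\sup_{\|x\|_2\le R_2}\frac1N\sum_k\sigma_k\langle x,a(\xi^k)\rangle$ with suitably rescaled $a$. By Cauchy--Schwarz and Jensen this is at most $\frac{R_2}{N}\sqrt{\sum_k\|a_k\|_2^2}$, which gives $\cO(M_2R_2/\sqrt N)$. The rescaling is delicate because the Lipschitz constants vary with $\xi$. I would handle this by writing $f(x,\xi)=\tilde g(\xi,\langle x,a/\|a\|\rangle)$, so that every scalar function is $M_2$-Lipschitz and the vectors have unit norm. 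The dimension-free character of this bound is exactly where condition 2 is used. Inequality \eqref{eq:GLC} then follows from the decomposition
\[
f(x)-f(x^\star)=[f(x)-\bar f(x)]+[\bar f(x)-\bar f(\hat x^N)]+[\bar f(\hat x^N)-\bar f(x^\star)]+[\bar f(x^\star)-f(x^\star)].
\]
The third bracket is nonpositive, and the first and fourth are each bounded by $\Phi$.

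For \eqref{eq:GLSC} the uniform bound is too crude, so I would use a localized or ratio-type argument in the spirit of fast rates for strongly convex generalized linear models (Sridharan--Shalev-Shwartz--Srebro). The key fact is that strong convexity gives $\|x-x^\star\|_2^2\le \frac{2}{\mu_2}(f(x)-f(x^\star))$. The increments $f(x,\xi)-f(x^\star,\xi)$ therefore have variance at most $M_2^2\|x-x^\star\|^2\lesssim \frac{M_2^2}{\mu_2}(f(x)-f(x^\star))$, which is a Bernstein condition. I would then apply Talagrand/Bousquet concentration on shells $\{x: f(x)-f(x^\star)\le r\}$. Contraction as in the first step bounds the local Rademacher complexity by $M_2\sqrt{2r/\mu_2}/\sqrt N$. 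Solving the fixed-point equation gives $r^*\asymp M_2^2\log(1/\beta)/(\mu_2N)$, and with probability $1-\beta$ the peeling argument yields
\[
f(x)-f(x^\star)\le 2\bigl(\bar f(x)-\bar f(x^\star)\bigr)+\cO(r^*).
\]
Using $\bar f(x^\star)\ge\bar f(\hat x^N)$ gives \eqref{eq:GLSC}. \textbf{This step is the main obstacle.} The peeling must be set up so that only a $\log(1/\beta)$ factor appears and no extra $\log N$. If that fails, I would accept a $\log N$ factor, which is still consistent with the $\cO$ notation up to logarithms.

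Without condition 2, I would follow the stability route of Shalev-Shwartz et al. (2009). The ERM for $\mu_2$-strongly convex, $M_2$-Lipschitz losses is uniformly stable with stability constant $4M_2^2/(\mu_2N)$. A high-probability stability bound (Feldman--Vondrák or Bousquet--Klochkov--Zhivotovskiy) gives $f(\hat x^N)-f(x^\star)\le\tilde\cO(M_2^2\log(1/\beta)/(\mu_2N))$. For a general $x$, strong convexity of $\bar f$ gives $\|x-\hat x^N\|_2\le\sqrt{2(\bar f(x)-\bar f(\hat x^N))/\mu_2}$. Lipschitzness of $f$ then gives
\[
f(x)-f(\hat x^N)\le M_2\|x-\hat x^N\|_2,
\]
and combining the two bounds yields \eqref{eq:GLSC2}. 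Optimality up to constants I would only sketch with standard lower bounds: two-point Le Cam constructions for the $\sqrt{1/N}$ and $1/(\mu N)$ rates, and the Shalev-Shwartz et al. example for the non-GLM case.
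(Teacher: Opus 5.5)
The paper states Theorem~\ref{Th:GLC} without proof. It only points to Kakade--Sridharan--Tewari, Sridharan--Shalev-Shwartz--Srebro and Shalev-Shwartz et al.\ (2009), and to Feldman--Vondrak and Klochkov--Zhivotovskiy for high-probability versions. Your three steps reconstruct that route. Your argument for \eqref{eq:GLSC2} is the same decomposition the paper carries out, in expectation, in the proof of Theorem~\ref{Th:StrCnv}.

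\textbf{The gap is the high-probability ingredient of \eqref{eq:GLSC2}.} The Feldman--Vondrak and Bousquet--Klochkov--Zhivotovskiy bounds control the generalization gap $f(\hat x^N)-\bar f(\hat x^N)$ only up to $\gamma\log N\log(1/\beta)+b\sqrt{\log(1/\beta)/N}$, where $b$ is the range of the loss. That sampling term is unavoidable at their level of generality. Adding $\bar f(x^\star)-f(x^\star)$, which is also of order $N^{-1/2}$, you get only $f(\hat x^N)-f(x^\star)=\cO(N^{-1/2})$. Applying the same bounds to the excess loss does not help, since its range is still of order $M_2\,\mathrm{diam}\,Q$.

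The tool you need is Klochkov--Zhivotovskiy (2021). For a $\gamma$-uniformly stable ERM satisfying the Bernstein condition $\mathrm{Var}[f(x,\xi)-f(x^\star,\xi)]\le B\,(f(x)-f(x^\star))$, they prove that with probability $1-\beta$, $f(\hat x^N)-f(x^\star)\lesssim\gamma\log N\log(1/\beta)+(b+B)\log(1/\beta)/N$. You already derived this condition with $B=2M_2^2/\mu_2$ in your \eqref{eq:GLSC} step, and that derivation never uses condition~2. With $\gamma=\cO(M_2^2/(\mu_2N))$ you get $\cO\bigl(\frac{M_2^2}{\mu_2N}\log N\log(1/\beta)\bigr)$, which is where the $\tilde\cO$ comes from. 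Confidence boosting in the style of Davis et al., which the paper invokes for Theorem~\ref{Th:StrCnv}, is no substitute. It changes the estimator, whereas \eqref{eq:GLSC2} must hold for every approximate minimizer of $\bar f$.

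For \eqref{eq:GLSC}, falling back on an extra $\log N$ would not prove the statement, which is written with $\cO$, not $\tilde\cO$. No extra factor arises if you skip hand-made peeling and apply Theorem~3.3 of Bartlett--Bousquet--Mendelson (2005) to $h_x(\xi)=f(x,\xi)-f(x^\star,\xi)$. Take $T(h_x)=M_2^2\|x-x^\star\|_2^2$, so that $\mathrm{Var}\,h_x\le T(h_x)\le B\,(f(x)-f(x^\star))$; take $K=2$; and take $\psi(r)=B\sqrt{r/N}$ from your contraction bound. This gives $f(x)-f(x^\star)\le 2(\bar f(x)-\bar f(x^\star))+\cO\bigl((B+b)\log(1/\beta)/N\bigr)$. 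The range term is harmless: for each $f(\cdot,\xi)$ the midpoint inequality gives $\frac{\mu_2}{8}\|x-z\|_2^2\le\frac{M_2}{2}\|x-z\|_2$. Hence $\mathrm{diam}\,Q\le 4M_2/\mu_2$ and $b\lesssim M_2^2/\mu_2$.

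Your first step needs two smaller repairs.
\begin{itemize}
\item Centering at $x_0$ is not an extra assumption for \eqref{eq:GLC}: it preserves condition~2 and leaves both sides of \eqref{eq:GLC} unchanged. Only the first display, about $\bar f$ itself, needs a range bound, and it is false without one (take $f(x,\xi)=y(\xi)$).
\item The claim that $g(y,\cdot)$ is $M_2/\|a\|_2$-Lipschitz fails for lower-dimensional $Q$. For $Q=[-1,1]\times\{0\}$, $a=(1,K)$ and $g(y,t)=|t|$, the true constant is $1$, not $1/\sqrt{1+K^2}$. Replace $a$ by its projection $Pa$ onto $\mathrm{span}(Q-Q)$ and write $\langle x,a\rangle=\langle x_0,a\rangle+\langle x-x_0,Pa\rangle$. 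A relative-interior argument then gives the constant $M_2/\|Pa\|_2$, and contraction goes through with $\|x-x_0\|_2\le 2R_2$.
\end{itemize}
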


This theorem reduces stochastic optimization problem to the empirical loss (risk) minimization problem 
\begin{equation}\label{eq:ERM}
  \min\limits_{x\in Q}  \frac{1}{N}\sum_{k=1}^N f(x,\xi^k)  
\end{equation}
with proper choice of $N$, see the next section.

Now we move to \dm{the}  \textit{online} approach and explain why it is so called. The standard SGD is in the core of \dm{the}  online approach:
\begin{equation}\label{eq:projSGD}
x^{k+1}= \pi_Q\left(x^k - \gamma_k\nabla_x f(x^k,\xi^k)\right),    
\end{equation}
where $\pi_Q$ is \dm{the E}uclidean projection onto $Q$. Note that 
\begin{align*}
    \|x^{k+1} - x^\star\|_2^2 
    &= \left\|\pi_Q\left(x^{k} - \gamma_k \nabla_x f(x^k,\xi^k) - x^\star\right)\right\|_2^2  \\
    &\le \|x^{k} - \gamma_k \nabla_x f(x^k,\xi^k) - x^\star\|_2^2 \\
    &= \|x^{k} - x^\star\|_2^2  - 2\gamma_k \langle \nabla_x f(x^k,\xi^k), x^k - x^\star \rangle + \gamma_k^2\| \nabla_x f(x^k,\xi^k)\|_2^2 \\
    &\le \|x^{k} - x^\star\|_2^2 - 2\gamma_k \langle \nabla_x f(x^k,\xi^k), x^k - x^\star \rangle + \gamma_k^2M_2^2.
\end{align*}
The last inequality holds as $f(x,\xi)$ is $M_2$-Lipschitz continuous in $x$ in $2$-norm and therefore,  $\|\nabla_x f(x^k,\xi^k)\|_2 \le M_2$. From the convexity of $f(x,\xi)$ in $x$:
\begin{align*}
    f(x^k,\xi^k) - f(x^\star,\xi^k) 
    &\le \langle \nabla_x f(x^k,\xi^k), x^k - x^\star \rangle  \\
    &\le \frac{1}{2\gamma_k}\left(\|x^{k} - x^\star\|_2^2 - \|x^{k+1} - x^\star\|_2^2\right) + \frac{\gamma_kM_2^2}{2}.
\end{align*}
From the $\mu_2$-strong convexity of $f(x,\xi)$ in $x$:
\begin{align*}
    f(x^k,\xi^k) - f(x^\star,\xi^k) 
    &\le \langle \nabla_x f(x^k,\xi^k), x^k - x^\star \rangle - \frac{\mu_2}{2}\|x^k - x^\star\|_2^2  \\
    &\le \frac{1}{2}\left(\frac{1}{\gamma_k} - \mu_2\right)\|x^{k} - x^\star\|_2^2 -  \frac{1}{2\gamma_k}\|x^{k+1} - x^\star\|_2^2 + \frac{\gamma_kM_2^2}{2}.
\end{align*}
Summing for $k=1,...,N$ <<convex>> inequality  with $\gamma_k\equiv \frac{R_2}{M_2\sqrt{N}}$ and <<strongly convex>> inequality  with\footnote{In this case we have the telescopic property: $\frac{1}{\gamma_{k+1}} - \mu_2 = \frac{1}{\gamma_{k}}$.} $\gamma_k =  \frac{1}{\mu_2 k}$ we obtain after normalization (multiplication on $N^{-1}$):
\begin{equation}\label{eq:onlineConvex}
    \frac{1}{N}\sum_{k=1}^N f(x^k,\xi^k) \le     \frac{1}{N}\sum_{k=1}^N f(x^\star,\xi^k) + \frac{M_2\|x^1 - x^\star\|_2}{\sqrt{N}},
\end{equation}
\begin{equation}\label{eq:onlineSConvex}
    \frac{1}{N}\sum_{k=1}^N f(x^k,\xi^k) \le     \frac{1}{N}\sum_{k=1}^N f(x^\star,\xi^k) + \frac{M_2^2(1 + \log N)}{2\mu_2 N}.
\end{equation}
Note that in \eqref{eq:onlineConvex}, \eqref{eq:onlineSConvex} $x^\star\in Q$ can be chosen in an arbitrary manner, say such that \dm{to} minimize right hand side (RHS), i.e.\dm{,}
\begin{equation*}
    \frac{1}{N}\sum_{k=1}^N f(x^k,\xi^k) \le      \min\limits_{x\in Q} \frac{1}{N}\sum_{k=1}^N f(x,\xi^k) + \frac{M_2\|x^1 - x^\star\|_2}{\sqrt{N}},
\end{equation*}
\begin{equation*}
    \frac{1}{N}\sum_{k=1}^N f(x^k,\xi^k) \le      \min\limits_{x\in Q}  \frac{1}{N}\sum_{k=1}^N f(x,\xi^k) + \frac{M_2^2(1 + \log N)}{2\mu_2 N}.
\end{equation*}
Since we still do not  use the probability nature of $\xi^k$, 
the last two inequalities characterize \dm{the} SGD \eqref{eq:projSGD} as online learning procedure in the standard online sense \cite{cesa2006prediction}.

If we remember now about i.i.d. nature of $\left\{\xi^k\right\}_{k=1}^N$, remember that: $\EE_{\xi} f(x,\xi)\equiv  f(x)$, $f(x,\xi)$ is $M_2$-Lipschitz continuous in $x$ in $2$-norm and $f(x)$ is convex, than \eqref{eq:onlineConvex}, \eqref{eq:onlineSConvex} could be further simplify (\textit{online to batch conversion}\footnote{Difficulties appear in strongly convex case. For this set up one should use more accurate upper bound for variance $\text{Var}\left[  f(x,\xi) - f(x^{\star},\xi)\right] \le \frac{4M_2^2}{\mu_2} \left(f\left(x\right) - f\left(x^{\star}\right)\right)$.} \cite{kakade2008generalization}).
\begin{theorem}\label{Th:Online_Prob}
Consider stochastic optimization problems \eqref{eq:SOP} with $f(x,\xi)$ satisfies 
\dm{the condition 1}.
Then for $x^k$ generated by \eqref{eq:projSGD} with probability at least $1 - \beta$:
\begin{equation}\label{eq:COR}
    f(\bar{x}^N) - f(x^\star) = \cO\left(\frac{M_2\|x^1 - x^\star\|_2 \log\left(1/\beta \right)}{\sqrt{N}}\right),
\end{equation}
 where 
\begin{equation}\label{eq:avg}
\bar{x}^N = \frac{1}{N}\sum_{k=1}^N x^k.
\end{equation}
If additionally $f(x,\xi)$ is $\mu_2$-strongly convex in $x$ in $2$-norm, then with probability at least $1 - \beta$:
\begin{equation}\label{eq:SCOR}
    f(\bar{x}^N) - f(x^\star) =      \cO\left( \frac{M_2^2\log \left(N/\beta\right)}{\mu_2 N}\right).
\end{equation}
\end{theorem}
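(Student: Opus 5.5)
The plan is to perform an online-to-batch conversion of the regret bounds \eqref{eq:onlineConvex} and \eqref{eq:onlineSConvex}, which were derived above without using any probabilistic structure. The key object is the martingale difference sequence
\[
\Delta_k := \bigl(f(x^k)-f(x^\star)\bigr)-\bigl(f(x^k,\xi^k)-f(x^\star,\xi^k)\bigr), \qquad k=1,\dots,N.
\]
Since $x^k$ depends only on $\xi^1,\dots,\xi^{k-1}$ and $\xi^k$ is independent of them, we have $\EE[\Delta_k\mid \xi^1,\dots,\xi^{k-1}]=0$. By convexity of $f$ (Jensen's inequality) applied to \eqref{eq:avg},
\[
f(\bar x^N)-f(x^\star)\le \frac1N\sum_{k=1}^N \bigl(f(x^k)-f(x^\star)\bigr)
= \frac1N\sum_{k=1}^N \bigl(f(x^k,\xi^k)-f(x^\star,\xi^k)\bigr)+\frac1N\sum_{k=1}^N\Delta_k .
\]
The first sum is controlled by \eqref{eq:onlineConvex} in the convex case and by \eqref{eq:onlineSConvex} in the strongly convex case. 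It then remains to bound the martingale term $\frac1N\sum_k\Delta_k$ with high probability.

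\emph{Convex case.} Since the iterates stay in $Q$, the Lipschitz condition gives $|f(x^k,\xi)-f(x^\star,\xi)|\le M_2\|x^k-x^\star\|_2\le M_2 D$, where $D$ is the diameter of $Q$. Hence $|\Delta_k|\le 2M_2D$. The Azuma--Hoeffding inequality then yields $\frac1N\sum_k\Delta_k=\cO\bigl(M_2D\sqrt{\log(1/\beta)/N}\bigr)$ with probability at least $1-\beta$. To express this through $\|x^1-x^\star\|_2$ instead of $D$, I would use the fact that the step size $\gamma_k\equiv R_2/(M_2\sqrt N)$ keeps the iterates in a ball of radius $\cO(\|x^1-x^\star\|_2)$ around $x^\star$, up to logarithmic factors, with high probability. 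One way to see this is to sum the distance recursion and use a union bound over $k$; another is to restrict attention to $Q\cap B(x^\star, c\|x^1-x^\star\|_2)$. This is where the extra $\log(1/\beta)$ factor in \eqref{eq:COR}, as opposed to $\sqrt{\log(1/\beta)}$, naturally appears, and I would simply accept it. Combining the two bounds gives \eqref{eq:COR}.

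\emph{Strongly convex case.} This is the main obstacle. Azuma's inequality alone gives only $N^{-1/2}$, which would destroy the $1/N$ rate. Following the footnote, I would instead use the variance bound
\[
\mathrm{Var}\bigl[f(x,\xi)-f(x^\star,\xi)\bigr]\le M_2^2\|x-x^\star\|_2^2\le \frac{4M_2^2}{\mu_2}\bigl(f(x)-f(x^\star)\bigr),
\]
which follows from Lipschitzness together with strong convexity of $f$. Writing $a_k:=f(x^k)-f(x^\star)$, a Freedman (Bernstein-type) martingale inequality, combined with a peeling/union bound over $\log N$ scales of the conditional variance, gives with probability at least $1-\beta$
\[
\sum_k\Delta_k\le \sqrt{\frac{C M_2^2}{\mu_2}\sum_k a_k\,\log\frac{\log N}{\beta}}+C M_2 D\log\frac{\log N}{\beta}.
\]
The second term can be refined to $\cO\bigl(M_2^2/\mu_2\cdot\log(\cdot)\bigr)$ by using the bound $\|x^k-x^\star\|_2\le 2M_2/\mu_2$, which holds for a strongly convex Lipschitz objective on $Q$. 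Substituting into the regret bound, we get
\[
S:=\sum_k a_k \le \frac{M_2^2(1+\log N)}{2\mu_2}+\sqrt{\frac{C M_2^2}{\mu_2} S\log\frac{N}{\beta}}+\frac{C M_2^2}{\mu_2}\log\frac{N}{\beta}.
\]
This is a quadratic inequality in $\sqrt S$; solving it gives $S=\cO\bigl(\frac{M_2^2}{\mu_2}\log(N/\beta)\bigr)$. Dividing by $N$ and applying Jensen's inequality yields \eqref{eq:SCOR}.
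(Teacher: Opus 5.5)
Your route is the one the paper has in mind: the regret bounds plus an online-to-batch conversion as in Kakade--Tewari, with Azuma in the convex case and Freedman plus peeling with the footnote's variance bound in the strongly convex case. The strongly convex half is complete. The bound $\|x-x^\star\|_2\le 2M_2/\mu_2$ on $Q$ controls the range term, and the quadratic inequality in $\sqrt S$ gives \eqref{eq:SCOR}.

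The soft spot is the convex case. Azuma with $|\Delta_k|\le 2M_2D$ proves the bound with the diameter $D$ of $Q$ in place of $\|x^1-x^\star\|_2$. That is all the paper implicitly uses, since it immediately invokes $\|x^1-x^\star\|_2\le 2R_2$. You only assert the localization to $\|x^1-x^\star\|_2$, and neither suggested mechanism delivers \eqref{eq:COR}:
\begin{itemize}
\item Restricting attention to $Q\cap B(x^\star,c\|x^1-x^\star\|_2)$ is not legitimate. The iteration projects onto $Q$, and nothing a priori keeps the iterates in that ball.
\item A union bound over $k$ gives a radius of order $\|x^1-x^\star\|_2\sqrt{\log(N/\beta)}$. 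The final factor is then $\sqrt{\log(N/\beta)\log(1/\beta)}$, which grows with $N$, unlike \eqref{eq:COR}.
\end{itemize}

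The clean fix is to handle localization and the function-value martingale in one pass, with a time-uniform bound instead of a union bound. Set $R:=\|x^1-x^\star\|_2$, $\gamma=R/(M_2\sqrt N)$ (the tuning implicit in \eqref{eq:onlineConvex}), $d_j:=\|x^j-x^\star\|_2$ and $a_j:=f(x^j)-f(x^\star)$. Convexity of $f(\cdot,\xi^j)$ gives $\langle\nabla_x f(x^j,\xi^j),x^j-x^\star\rangle\ge a_j-\Delta_j$, so summing the distance recursion of Section~\ref{sec:ML} yields
\[
d_{k+1}^2+2\gamma\sum_{j\le k}a_j\le 2R^2+2\gamma\sum_{j\le k}\Delta_j .
\]
Since $|\Delta_j|\le 2M_2d_j$ with $d_j$ predictable, Hoeffding's lemma makes $\exp\bigl(\lambda\sum_{j\le k}\Delta_j-2\lambda^2M_2^2\sum_{j\le k}d_j^2\bigr)$ a nonnegative supermartingale. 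Ville's maximal inequality then gives, with probability at least $1-\beta$ and simultaneously for all $k\le N$,
\[
\sum_{j\le k}\Delta_j\le 2\lambda M_2^2\sum_{j\le k}d_j^2+\lambda^{-1}\log(1/\beta).
\]
Take $\lambda=(8\gamma M_2^2N)^{-1}$. Induction on $k$ gives $\max_{k\le N+1}d_k^2\le 4R^2\bigl(1+8\log(1/\beta)\bigr)$, and substituting back gives $\frac1N\sum_j a_j\le \bigl(2+16\log(1/\beta)\bigr)M_2R/\sqrt N$. Combined with Jensen, this is exactly \eqref{eq:COR}: it has $\log(1/\beta)$ rather than $\sqrt{\log(1/\beta)}$, no $\log N$, and $Q$ need not be bounded. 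So your intuition about where the $\log(1/\beta)$ comes from is right, but the union-bound route would not produce it.
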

Since $\|x^1 - x^\star\|_2 \le 2R_2$, it follows that \eqref{eq:COR} and \eqref{eq:SCOR} correspond to \eqref{eq:GLC} and \eqref{eq:GLSC}, \eqref{eq:GLSC2} in \textit{sample complexity} -- the required number of samples $N$. However, online approach does not require to solve an auxiliary empirical problem \eqref{eq:ERM} and was justified under weaker assumptions. More detailed comparison online and offline approaches is given in the next section.

To conclude this section, remind the main observation: statistical approach for data science problems is a particular case of the general machine learning (ML) approach, where the loss function has a specific form determined by log-likelihood functions. So further we will consider mainly \dm{the} ML approach, which characterize\dm{s} stochastic optimization problem \eqref{eq:SOP}.


\section{Sample Average Approximation versus Stochastic Approximation}\label{Sec:SAAvsSA}
In this section, we consider stochastic optimization problem \eqref{eq:SOP}.
We are mainly interested in the sample complexity of offline (also called \textit{Sample Average Approximation}) and online (also called \textit{Stochastic Approximation}) procedures, which generate $\tilde{x}^N\left(\left\{\xi^k\right\}_{k=1}^N\right)$ from the solution of the empirical problem \eqref{eq:ERM} or from the procedure of  type \eqref{eq:projSGD}. More precisely, we are interested \dm{in} estimat\dm{ing} such $N:=N(\varepsilon,\beta)$ that
\begin{equation}\label{eq:P_tilde}
\mathds{P}\left(f\left(\tilde{x}\left(\{\xi^k\}_{k=1}^N\right)\right) - f(x^*) \le \varepsilon\right)\ge 1 - \beta.
\end{equation}
Assume that $Q\subseteq B^n_p(R_p)$ ($p \ge 1$) and for all $\xi$ and $x,y \in Q$
\begin{equation}\label{eq:Lip_p}
   |f(y,\xi) - f(x,\xi)|\le M_p\|y-x\|_p. 
\end{equation}

Let $\bar{x}_{\delta,\tilde{\beta}}^N:=\bar{x}_{\delta,\tilde{\beta}}^N\left(\{\xi^k\}_{k=1}^N\right)$ be the $\left(\delta,\tilde{\beta}\right)$-solution of the empirical problem \eqref{eq:ERM}
\begin{equation*}
  \min\limits_{x\in Q}  \left[\bar{f}(x):=\frac{1}{N}\sum_{k=1}^N f(x,\xi^k)\right],  
\end{equation*}
that is, with probability at least $1 - \tilde{\beta}$:
$$\bar{f}\left(\bar{x}_{\delta,\tilde{\beta}}^N\right) -  \min\limits_{x\in Q} \bar{f}\left(x\right) = \bar{f}\left(\bar{x}_{\delta,\tilde{\beta}}^N\right) - \bar{f}\left(\hat{x}^N\right)\le \delta.$$

\subsection{Non-convex case and convex case}\label{Sec:NonConv_Conv}
One of the first and quite unexpected results about \dm{the} offline approach 
\dm{is the following}

\begin{theorem}\label{Th:ShapiroNonConvex} Assume that \eqref{eq:Lip_p} \dd{is satisfied}.
Then \eqref{eq:P_tilde} holds with $\tilde{x}^N\left(\left\{\xi^k\right\}_{k=1}^N\right)=\bar{x}_{\varepsilon/2,\beta/2}^N\left(\{\xi^k\}_{k=1}^N\right)$, 
\begin{equation}\label{eq:N_ERM_nonConv}
N(\varepsilon,\beta) = \cO\left(\frac{M_p^2R_p^2}{\varepsilon^2}\left(n\log\left(\frac{M_pR_p}{\varepsilon}\right)+\log\left(\frac{1}{\beta}\right)\right)\right).
\end{equation}
This bound is optimal up to a logarithmic factor. Moreover, if we additionally assume that $f(x,\xi)$ is convex and smooth in $x$, \eqref{eq:N_ERM_nonConv} 
\dm{is} still partially an optimal bound (up to a Remark~\ref{Feldman} and Theorem~\ref{Th:conv_reg}). 
\end{theorem}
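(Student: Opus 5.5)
The plan is to prove a uniform deviation bound for the empirical loss over $Q$, using a finite $\nu$-net in $\|\cdot\|_p$, Hoeffding's inequality at the net points, and a union bound; the Lipschitz condition \eqref{eq:Lip_p} then transfers the bound from the net to all of $Q$. Fix $\nu>0$ and let $\mathcal{N}_\nu\subset Q$ be a $\nu$-net of $Q\subseteq B_p^n(R_p)$ in the $p$-norm. The standard volumetric argument gives $|\mathcal{N}_\nu|\le (3R_p/\nu)^n$. For fixed $x\in\mathcal{N}_\nu$, take $x_0\in Q$ as a reference point. The centred variables $f(x,\xi^k)-f(x_0,\xi^k)$ lie in an interval of length at most $2M_p\cdot 2R_p$ by \eqref{eq:Lip_p}. (The constant $f(x_0,\cdot)$ cancels when comparing objective values, so we may work with these differences without assuming $f$ itself is bounded.) Hoeffding's inequality then gives
\[
\mathds{P}\Bigl(\bigl|\bar f(x)-\bar f(x_0)-(f(x)-f(x_0))\bigr|\ge t\Bigr)\le 2\exp\Bigl(-\tfrac{N t^2}{8M_p^2R_p^2}\Bigr).
\]
A union bound over $\mathcal{N}_\nu$ with failure probability $\beta/4$ makes all these deviations at most $t$ once
\[
N\ge \frac{8M_p^2R_p^2}{t^2}\bigl(n\log(3R_p/\nu)+\log(8/\beta)\bigr).
\]

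Next I extend to all of $Q$. Both $f$ and $\bar f$ are $M_p$-Lipschitz, since $f$ inherits the constant in expectation. So for any $x\in Q$ with nearest net point $x'$, the centred deviation changes by at most $2M_p\nu$. I choose $t=\varepsilon/8$ and $\nu=\varepsilon/(32M_p)$. Then, on an event of probability at least $1-\beta/4$, the following holds for all $x\in Q$:
\[
\bigl|(\bar f(x)-\bar f(x_0))-(f(x)-f(x_0))\bigr|\le \varepsilon/4 .
\]
This yields exactly \eqref{eq:N_ERM_nonConv}, since $\log(3R_p/\nu)=\mathcal{O}(\log(M_pR_p/\varepsilon))$. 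No convexity is used anywhere, which explains why the result holds in the non-convex case.

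To conclude, intersect this event with the event of probability at least $1-\beta/2$ on which $\tilde x=\bar{x}_{\varepsilon/2,\beta/2}^N$ satisfies $\bar f(\tilde x)-\bar f(\hat x^N)\le\varepsilon/2$. Then
\[
f(\tilde x)-f(x^*)\le \bar f(\tilde x)-\bar f(x^*)+\varepsilon/2\le \bar f(\tilde x)-\bar f(\hat x^N)+\varepsilon/2\le\varepsilon,
\]
which holds with probability at least $1-\beta$. There is a subtlety: $\tilde x$ depends on the sample, but this is harmless because the deviation bound is uniform over $Q$.

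The main obstacle is the optimality claim. For the lower bound, I would follow Shapiro--Nemirovski and Feldman. One needs an instance with $M_p,R_p$ fixed in which some empirical $\varepsilon/2$-minimizer is $\varepsilon$-bad unless $N\gtrsim n/\varepsilon^2$, and this must hold even for convex smooth $f$. My first attempt would be Feldman's construction: place $2^{\Omega(n)}$ nearly orthogonal directions $v_j$ on the sphere, and let $f(x,\xi)$ be a max of linear pieces $\langle v_j,x\rangle$ indexed by a random subset $\xi$ of directions (smoothed if needed). With fewer than about $n$ samples, some direction is missed by every sample, and that direction is an empirical minimizer with large population loss. Combining this with the classical $\Omega(M_p^2R_p^2\log(1/\beta)/\varepsilon^2)$ bound, which already holds for a one-dimensional linear $f$, gives optimality up to the logarithmic factor. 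Getting the dependence on $n$ right for a convex instance is the delicate part; I would defer it to Remark~\ref{Feldman}, as the statement itself suggests.
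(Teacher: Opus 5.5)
Your upper-bound argument is correct, but it takes a different route from the paper. You prove a two-sided uniform deviation bound for $\bar f-f$ over all of $Q$, anchored at a fixed $x_0$. This uses Hoeffding's inequality on a $\nu$-net and Lipschitz transfer of both $f$ and $\bar f$, and you then apply the standard ERM inequality.

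The paper follows Shapiro instead:
\begin{itemize}
\item For finite $Q$, it maps each $\varepsilon$-bad point $x$ to a true minimizer $u(x)$. It bounds only the one-sided event $\bar f(x)\le \bar f(u(x))+\delta$, using a Chernoff bound under the sub-Gaussian condition \eqref{eq:MGF} with proxy $\lambda^2\|x-u(x)\|_p^2$, then takes a union bound over $Q$.
\item For general $Q$, it passes to the reduced set $\tilde Q=\mathcal{N}\cup\tilde S$ (a net plus minimizers). It controls the random empirical Lipschitz constant $\bar M_p$ under the weaker condition \eqref{eq:Lip_p_stoch_in} via Cramer's theorem. It then shows that every $\delta$-optimal point of the full SAA problem has a nearby net point that is $\delta'$-optimal for the reduced problem.
\end{itemize}

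Under the uniform condition \eqref{eq:Lip_p}, your version is shorter and has explicit constants. It needs no large-deviation control of $\bar M_p$, and it yields the stronger uniform-convergence statement. The paper's one-sided pairing with minimizers buys generality: it allows unbounded sub-Gaussian noise and random Lipschitz constants. Its $\|x-u(x)\|_p$-dependent proxy is also exactly what gets localized to $Q_{2\varepsilon}$ in Theorem~\ref{Th:ShapiroNonConvex2}.

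The paper's proof does not address the optimality claims at all, so your lower-bound paragraph goes beyond it. As sketched, though, it aims at the wrong target.
\begin{itemize}
\item By Remark~\ref{Feldman}, for convex $f$ Feldman's lower bound is $\tilde\Omega\left(n/\varepsilon+1/\varepsilon^2\right)$, and $\tilde{\cO}\left(n/\varepsilon+1/\varepsilon^2\right)$ samples suffice. That is why the statement calls \eqref{eq:N_ERM_nonConv} only \emph{partially} optimal in the convex smooth case. You should not expect a convex instance forcing $N\gtrsim n/\varepsilon^2$.
\item Consistently, the missed-direction construction you describe only needs $2^{\Omega(n)}(1-\varepsilon)^N\gtrsim 1$. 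It therefore gives about $n$ samples at constant accuracy, or $n/\varepsilon$ after rescaling.
\item Combining this with the one-dimensional $\log(1/\beta)/\varepsilon^2$ bound gives $n/\varepsilon+\log(1/\beta)/\varepsilon^2$. That is not \eqref{eq:N_ERM_nonConv} up to logarithms.
\end{itemize}

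The claim that the bound is \emph{optimal up to a logarithmic factor} concerns the general Lipschitz, non-convex class, and it needs a non-convex instance. For example, place disjoint $M_p$-Lipschitz bumps of height $\asymp M_pR_p$ at $2^{\Omega(n)}$ points of $B_p^n(R_p)$ that are $\Omega(R_p)$-separated. Multiply them by independent random signs whose means vanish except at one point, where the mean is $\asymp\varepsilon/(M_pR_p)$. The maximal empirical sign-mean over the other bumps is then of order $\sqrt{n/N}$. So the empirical minimizer picks a bump with population excess $\gtrsim\varepsilon$ unless $N\gtrsim nM_p^2R_p^2/\varepsilon^2$.
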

\begin{proof}

The proof consists of two parts. Firstly, we prove the result for finite set $Q$. And then we generalize the result to the case of bounded $Q$.

For $\varepsilon \geq 0$ denote by
\begin{equation*}
    S^\varepsilon := \left\{ x \in Q : f(x) \leq \min\limits_{x\in Q} f\left(x\right) + \varepsilon \right\}, \quad \bar{S} ^\varepsilon := \left \{ x \in Q : \bar{f}\left(x\right) \leq \min\limits_{x\in Q} \bar{f}\left(x\right) + \varepsilon \right\}
\end{equation*}
the sets of $\varepsilon$-optimal solutions of the problem~\eqref{eq:SOP} and the empirical problem~\eqref{eq:ERM}, respectively.

In the case of finite $Q$, the sets $S^\varepsilon$ and $\bar{S} ^\varepsilon$ are nonempty and finite. For parameters
$\varepsilon \geq 0$ and $\delta \in [0, \varepsilon]$, consider the event $\{\bar{S} ^\delta \subset  S^\varepsilon\}$. This event means that any $\delta$-optimal
solution of the empirical problem~\eqref{eq:ERM} is an $\varepsilon$-optimal solution of the problem~\eqref{eq:SOP}. Next, we estimate the probability of that event.

\begin{equation*}
    \{\bar{S} ^\delta \nsubset  S^\varepsilon\} = \bigcup_{x\in Q \setminus S^\varepsilon} \bigcap_{y \in Q} \left\{\bar{f}\left(x\right) \leq \bar{f}\left(y\right) + \delta \right\}
\end{equation*}

\begin{equation*}
    \mathds{P}\left(\bar{S} ^\delta \nsubset  S^\varepsilon\right) \leq \sum_{x\in Q \setminus S^\varepsilon} \mathds{P}\left(\bigcap_{y \in Q} \left\{\bar{f}\left(x\right) \leq \bar{f}\left(y\right) + \delta \right\} \right) 
\end{equation*}

Consider a mapping $u : Q \setminus S^\varepsilon \longrightarrow Q$. If the set $Q \setminus S^\varepsilon$ is empty, then any feasible point $x \in Q$ is an $\varepsilon$-optimal solution of the true problem. Therefore we assume that this set is nonempty. Then from the last inequality follows that
\begin{equation}
    \mathds{P}\left(\bar{S} ^\delta \nsubset  S^\varepsilon\right) \leq \sum_{x\in Q \setminus S^\varepsilon} \mathds{P}\left( \bar{f}\left(x\right) \leq \bar{f}\left(u(x)\right) + \delta \right)
    \label{eq:Pr_bound}
\end{equation}

We assume that the mapping $u(\cdot)$ is chosen in such a way that $f\left(u\left(x\right)\right) \leq f(x) - \varepsilon^*$, for every $x \in Q \setminus S^\varepsilon$ and for some $\varepsilon^* \geq \varepsilon$. Such a mapping always exists. For example, if we use a mapping $u : Q \setminus S^\varepsilon \longrightarrow S$ ($S$ -- the set of minimizers of $f(x)$ over $Q$), then~\eqref{eq:Pr_bound} holds with
\begin{equation*}
    \varepsilon^* := \min\limits_{x\in Q \setminus S^\varepsilon} f\left(x\right)  - \min\limits_{x\in Q} f\left(x\right)
\end{equation*}
and that $\varepsilon^* > \varepsilon$ since the set $Q$ is finite. Different choices of $u(·)$ give a certain flexibility to the following derivations.

We relax the condition \eqref{eq:Lip_p} as follows: for  $Y(x, y, \xi) := f(x, \xi) - f (y, \xi)$ and for all $x,y \in Q$ \textit{the sub-Gaussian variance} of a random variable $Y(x, y, \xi) - \EE_\xi Y(x, y, \xi)$ bounded from above by $\lambda^2\|y-x\|_p^2$, i.e. for all $t\in\R$:
\begin{equation}\label{eq:MGF}
    \EE_{\xi}\left[\exp\left(t\cdot\left(Y(x, y, \xi) - \EE_\xi Y(x, y, \xi)\right)\right)\right]  \le \exp\left(t^2\lambda^2\|x-y\|_p^2/2\right).
\end{equation}
The assumption holds, for example, if the support of $\xi$ is a bounded subset of $\R^d$, or if $Y (x, y, \xi)$ grows at most linearly and $\xi$ has a distribution from an exponential family.
Note that if \eqref{eq:Lip_p} holds, then $\lambda^2 \le 2 M_p^2$.
\footnote{
    If the assumption~\eqref{eq:Lip_p} holds then the expectation function $f(x)$ is also Lipschitz continuous on $Q$ with Lipschitz constant $M_p$, and hence the random variable $Y(x, y, \xi) - \EE_\xi Y(x, y, \xi)$ can be bounded as $|Y(x, y, \xi) - \EE_\xi Y(x, y, \xi)| \leq 2M_p \|x - y\|_p$ w.p. 1. 
    Since
    \[
    |Y(x,y,\xi)-\EE_\xi Y(x,y,\xi)|\le 2M_p\|x-y\|_p \quad \text{a.s.},
    \]
    it follows by Hoeffding's lemma \cite{hoeffding1963probability} that
    \[
    \EE_\xi\exp\!\left(t\bigl(Y(x,y,\xi)-\EE_\xi Y(x,y,\xi)\bigr)\right)
    \le
    \exp\!\left(2t^2M_p^2\|x-y\|_p^2\right), \qquad \forall t\in\mathbb R.
    \]
}

For each $x \in Q \setminus S^\varepsilon$ and $x':=u(x)$, define
\begin{equation*}
    Y(x', x, \xi) := f(x', \xi) - f(x, \xi)
\end{equation*}
Note that $\EE_\xi[Y (x', x, \xi)] = f(x') - f(x)$, and hence $\EE_\xi[Y (x', x, \xi)] \leq -\varepsilon^*$ for all $x \in Q \setminus S^\varepsilon$ due to the mapping choice.

The corresponding sample average is
\begin{equation*}
    \bar{Y}(x', x):=\frac{1}{N}\sum_{k=1}^N Y(x', x,\xi^k) = \bar{f}(x') - \bar{f}(x).
\end{equation*}

By~\eqref{eq:Pr_bound} we have
\begin{equation}
    \mathds{P}\left(\bar{S} ^\delta \nsubset  S^\varepsilon\right) \leq \sum_{x\in Q \setminus S^\varepsilon} \mathds{P}\left( \bar{Y}\left(x', x\right) \geq -\delta \right).
    \label{eq:Pr_bound_Y}
\end{equation}

Let $I_{x, x'}(\cdot)$ denote the (large deviations) rate function of the random variable $Y(x', x, \xi)$. The inequality~\eqref{eq:Pr_bound_Y} twith the standard Chernoff--Cramér upper bound implie
\begin{equation*}
    1- \mathds{P}\left(\bar{S} ^\delta \subset  S^\varepsilon\right) \leq \sum_{x\in Q \setminus S^\varepsilon} \exp\left(-N I_{x, x'}(-\delta)\right)
\end{equation*}

Let $\varepsilon$ and $\delta$ be non-negative numbers. Then the latter inequality implies
\begin{equation}\label{eq:Pr_bound_eta}
    1- \mathds{P}\left(\bar{S} ^\delta \subset  S^\varepsilon\right) \leq |Q|  \exp\left(-N \eta(\delta, \varepsilon)\right),
\end{equation}
where $\eta(\delta, \varepsilon) = \min_{x\in Q \setminus S^\varepsilon}I_{x, x'}(-\delta)$.


It follows from~\eqref{eq:MGF} that
\begin{equation}\label{eq:ln_MGF_bound}
    \ln \EE_\xi \left[\exp \left\{tY(x, x', \xi)\right\}\right] - t\EE_\xi\left[] Y (x, x', \xi)\right] \leq \frac{\lambda^2 \|x - x'\|_p^2}{2} \leq \frac{\lambda^2 R_p^2}{2}.
\end{equation}
Note, that it is suffices for the proof to verify assumption~\eqref{eq:MGF}  for every: $y = u(x) \in Q \setminus S^\varepsilon$.

Hence the rate function $I_{x, x'}(\cdot)$, of  $Y(x, x', \xi)$, satisfies for all $z \in \R$
\begin{equation*}
    I_{x, x'}(z) \geq \sup_{t \in \R} \left( t\left(z - \EE_\xi Y (x, x', \xi) \right) - \frac{t^2 \lambda^2 R_p^2}{2} \right) 
    = \frac{\left(z - \EE_\xi Y (x, x', \xi)\right)^2}{2\lambda^2 R_p^2}
\end{equation*}

In particular, it follows that
\begin{equation*}
    I_{x, x'}(z) \geq \frac{\left(-\delta - \EE_\xi Y (x, x', \xi)\right)^2}{2 \lambda^2 R_p^2} \geq \frac{\left(\varepsilon^*-\delta \right)^2}{2 \lambda^2 R_p^2} \geq \frac{\left(\varepsilon-\delta \right)^2}{2 \lambda^2 R_p^2}
\end{equation*}

Consequently the constant $\eta(\delta, \varepsilon)$ satisfies
\begin{equation*}
    \eta(\delta, \varepsilon) \geq \frac{(\varepsilon-\delta)^2}{2 \lambda^2 R_p^2}
\end{equation*}

and hence the bound~\eqref{eq:Pr_bound_eta} takes the form
\begin{equation*}
    1- \mathds{P}\left(\bar{S} ^\delta \subset  S^\varepsilon\right) \leq |Q|  \exp\left(\frac{-N(\varepsilon-\delta)^2}{2 \lambda^2 R_p^2}\right),
\end{equation*}

This leads to the following result giving an estimate of the sample size which guarantees
that any $\delta$-optimal solution of the SAA problem is an $\varepsilon$-optimal solution of the true problem with probability at least $1-\beta$.

Then for $ \varepsilon > 0$, $0 \leq \delta < \varepsilon$,
and $\beta \in (0, 1)$, and for the sample size $N$ satisfying
\begin{equation}\label{eq:N_lower_pre}
    N \geq \frac{2 \lambda^2 R_p^2}{(\varepsilon-\delta)^2} \ln\left(\frac{|Q|}{\beta}\right)
\end{equation}
it follows that
\begin{equation*}
    \mathds{P}\left(\bar{S} ^\delta \subset  S^\varepsilon\right) \geq 1-\beta.
\end{equation*}

Next, we again relax the condition \eqref{eq:Lip_p}, and use its non-uniform counterpart: for all $\xi$ and $x,y \in Q$
\begin{equation}\label{eq:Lip_p_stoch_in}
   \left|f(y,\xi) - f(x,\xi)\right|\le M_p(\xi)\|y-x\|_p,
\end{equation}
with the moment-generating function $\EE_{\xi}\left[\exp\left(t\cdot M_p(\xi)\right)\right]$ of $M_p(\xi)$ is being finite valued for all $t$ in a neighborhood of zero. This assumption holds if~\eqref{eq:Lip_p} holds and  implies that the expectation $\EE_\xi\left[M_p(\xi)\right]$ is finite and the function $f(x)$ is Lipschitz continuous on $Q$ with Lipschitz constant $\EE_\xi\left[M_p(\xi)\right]$. By Cramer’s large deviation theorem~\cite{dembo2009large} we have that for any $M'_p > \EE_\xi\left[M_p(\xi)\right]$ there exists a positive constant $\zeta = \zeta(M'_p)$ such that
\begin{equation}\label{eq:Lip_p_LD}
    \mathds{P}\left( \bar{M}_p > M'_p\right) \leq \exp\left( -N\zeta\right),
\end{equation}
where $\bar{M}_p = \frac{1}{N} \sum_{k=1}^N M_p(\xi_k)$. Note that it follows from~\eqref{eq:Lip_p_stoch_in} that w.p. 1
\begin{equation*}
    \left| \bar{f}(x) - \bar{f}(x') \right| \leq \bar{M}_p \|x-x'\|_p, \quad \text{for all } x, x' \in Q,
\end{equation*}
i.e., $\bar{f}$ is Lipschitz continuous on $Q$ with Lipschitz constant $\bar{M}_p$.

Let us set $\nu = \frac{\varepsilon-\delta}{4M'_p}$, $\varepsilon' := \varepsilon - M'_p \nu$ and $\delta' := \delta + M'_p \nu$. Note that $\nu > 0$, $\varepsilon' = \frac{3\varepsilon}{4} + \frac{\delta}{4} > 0$, $\delta' = \frac{\varepsilon}{4} + \frac{3\delta}{4} > 0$ and $\varepsilon' - \delta' = \frac{\varepsilon - \delta}{2} > 0$. 

Let $x^1, \dots, x^m \in Q\subseteq B^n_p(R_p)$ ($p \ge 1$)  be such that for every $x \in Q$ exists $x^i$, $i \in \{1, \dots, m\}$, such that $\|x - x^i\|_p \leq \nu$, i.e.,
the set $\mathcal{N} = \{x^1, \dots, x^m\}$ forms a $\nu$-net in $Q$. 
We can choose this net so that
\begin{equation}\label{eq:net}
    m \leq \left(\frac{\rho R_p}{\nu}\right)^n
\end{equation}
for a constant $\rho > 0$. Which follows from the standard volumetric estimate for covering numbers of norm balls,
\[
N(\nu,B_p^n(R_p),\|\cdot\|_p)\le \left(1+\frac{2R_p}{\nu}\right)^n
\le \left(\frac{3R_p}{\nu}\right)^n, \qquad \nu\le R_p,
\]
and hence \eqref{eq:net} holds with an absolute constant $\rho=3$
(see, e.g., \cite[Theorem 5.18]{shapiro2021lectures}).


If $\mathcal{N} \setminus S^{\varepsilon'}$ is empty, then any point of $\mathcal{N}$ is an $\varepsilon$-optimal solution of the problem~\eqref{eq:SOP}. Otherwise, choose a mapping $u : \mathcal{N} \setminus S^{\varepsilon'} \longrightarrow S$ and consider the sets $\tilde{S} := \bigcup_{x\in \mathcal{N}\setminus S^{\varepsilon'}} u(x).$ and $\tilde{Q} := \mathcal{N} \cup\tilde{S}$. Note that $\tilde{Q} \subset Q$ and $|\tilde{Q}| \leq \left(\frac{2\rho R_p}{\nu}\right)^n$. 

Now let us replace the set $Q$ by its subset $\tilde{Q}$. We refer to the problem~\eqref{eq:SOP} and its empirical counterpart as a reduced one for such replacement. We have that $\tilde{S} \subset S$, any point of the set $\tilde{S}$ is an optimal solutions of the true reduced problem and the optimal value of the true reduced problem is equal to the optimal value of the true (unreduced) problem~\eqref{eq:SOP}. By~\eqref{eq:N_lower_pre} we have that with probability at least $1-\beta/2$ any $\delta'$-optimal solution of the reduced empirical problem is an $\varepsilon'$-optimal solutions of the reduced (and hence unreduced) true problem provided that
\begin{equation}\label{eq:N_lower}
    N \geq \frac{8 \lambda^2 R_p^2}{(\varepsilon-\delta)^2} \left(n \ln\left(\frac{8\rho M'_p R_p}{\varepsilon-\delta}\right) + \ln\left(\frac{2}{\beta}\right)\right).
\end{equation}
Note that the right-hand side of~\eqref{eq:N_lower} is greater than or equal to the estimate
\begin{equation}\label{eq:N_lower_Lip}
    N \geq \frac{2\sigma^2}{(\varepsilon'-\delta')^2} \ln\left(\frac{|2\tilde Q|}{\beta}\right),
\end{equation}
$\sigma^2 := \lambda^2 R_p^2$, required by~\eqref{eq:N_lower_pre}.

We also have by~\eqref{eq:Lip_p_LD} that for
\begin{equation*}
    N \geq \zeta^{-1} \ln\left(\frac{2}{\beta}\right)
\end{equation*}
the Lipschitz constant $\bar{M}_p$ is less than or equal to $M'_p$ with probability at least $1 - \beta/2$.

Now let $\hat x$ be a $\delta$-optimal solution of the unreduced empirical problem. Then there is a point $x'\in \tilde{Q}$ such that $\|\hat x - x'\| \leq \nu$, and hence $\bar{f}(x') \leq \bar{f}(\hat x) + M'_p\nu$.  We also have that the optimal value of the unreduced empirical problem is smaller than or equal to the optimal value of its reduced counterpart. It follows that $x'$ is a $\delta'$-optimal solution of the reduced empirical problem, provided that $\bar{M}_p \leq M'_p$ Consequently, we have that $x'$ is an $\varepsilon'$-optimal solution of the problem~\eqref{eq:SOP} with probability at least $1-\beta/2$ provided that $N$ satisfies both inequalities~\eqref{eq:N_lower} and~\eqref{eq:N_lower_Lip}. It follows that
\begin{equation*}
    f(\hat x) \leq f(x') + \nu \EE_\xi\left[M_p(\xi)\right] \leq f(x') + M'_p\nu  \leq \min\limits_{x\in Q} f\left(x\right) + M'_p\nu + \varepsilon' \leq \min\limits_{x\in Q} f\left(x\right) + \varepsilon
\end{equation*}

We obtain that if $N$ satisfies both inequalities~\eqref{eq:N_lower} and~\eqref{eq:N_lower_Lip}, then with probability at least $1-\beta$, any $\delta$-optimal solution of the empirical problem is an $\varepsilon$-optimal solution of the problem~\eqref{eq:SOP}. 

By setting $\delta = \varepsilon/2$ the required estimate follows.
\end{proof}

\begin{remark}\label{Feldman} For convex $f\left(x,\xi\right)$ in \cite{feldman2016generalization} it was proved lower bound (for simplicity we skip all parameters except $n$ and $\varepsilon$): $\tilde{\Omega}\left(n/\varepsilon + 1/\varepsilon^2\right)$, that is better than upper bound $\tilde{O}\left(n/\varepsilon^2\right)$ we have just obtained. So since 2016 it was an open problem to remove this gap. In 2024 the problem was solved in
\cite{pmlr-v238-carmon24a}. Lower bound from \cite{feldman2016generalization} is tight (up to a logarithmic factor), that is $N = \tilde{O}\left(n/\varepsilon + 1/\varepsilon^2\right)$ samples is enough.
\end{remark}
In the close setting online approach gives a better result, see also \eqref{eq:COR} for $p=2$.
\begin{theorem}\label{Th:Agarwal} Assume that \eqref{eq:Lip_p} \dd{is satisfied} and $f(x,\xi)$ is convex in $x$ in $Q$. Then \eqref{eq:P_tilde} holds with $\tilde{x}^N\left(\left\{\xi^k\right\}_{k=1}^N\right)= \bar{x}^N\left(\{\xi^k\}_{k=1}^N\right)$ (see \eqref{eq:avg}) generated by the proper modification of \eqref{eq:projSGD} and\footnote{We talk about <<proper>> (\textit{Mirror Descent}) modification \cite{nemirovski1983problem,nemirovski2009robust}. Note that for $p \ge 2$ it is proper to use \eqref{eq:projSGD}. The factor $n^{1 - 2/p}$ appears since the diameter of $B_p^n(R_p)$ in \dd{the} $2$-norm is $\cO\left(n^{1/2 - 1/p}R_p\right)$.}
\begin{equation*}
N(\varepsilon,\beta) = \tilde{\cO}\left(\frac{M_p^2R_p^2}{\varepsilon^2}\ln\left(\frac{1}{\beta}\right)\right), \text{ if } p \in [1,2],
\end{equation*}
\begin{equation*}
N(\varepsilon,\beta) = \cO\left(n^{1 - 2/p}\frac{M_2^2R_p^2}{\varepsilon^2}\log\left(\frac{1}{\beta}\right)\right),\text{ if } p > 2.
\end{equation*}
These bounds are optimal up to logarithmic factors in the wide class of all reasonable ways to generate $\tilde{x}^N\left(\left\{\xi^k\right\}_{k=1}^N\right)$.\footnote{Note that in the non-convex case online approach $\tilde{x}^N\left(\left\{\xi^k\right\}_{k=1}^N\right)=\bar{x}^N\left(\{\xi^k\}_{k=1}^N\right)$ gives much worse results $N\propto \varepsilon^{-(n+1)}$, which is also optimal bound for non-convex class of $f(x,\xi)$. Note that the bound on $N\propto n^{1-2/p}M_2^2R_p^2\varepsilon^{-2}$ in the regime $p > 2$ can be refined in the dimension-free case $N\lesssim n$ : $N\propto M_p^pR_p^p\varepsilon^{-p}$ \cite{nemirovski1983problem}.}
\end{theorem}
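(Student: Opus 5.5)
We prove the upper bounds with stochastic mirror descent, and the optimality claim with an information-theoretic reduction.

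\textbf{Upper bound, $p\in[1,2]$.} Let $q$ be the dual exponent, $1/p+1/q=1$. By \eqref{eq:Lip_p}, every stochastic subgradient satisfies $\|\nabla_x f(x,\xi)\|_q\le M_p$. We run mirror descent
$$x^{k+1}=\arg\min_{x\in Q}\left\{\gamma\langle \nabla_x f(x^k,\xi^k),x\rangle+V(x,x^k)\right\},$$
with a prox-function $d$ that is $1$-strongly convex with respect to $\|\cdot\|_p$. We choose $d(x)=\frac{1}{2(p'-1)}\|x\|_{p'}^2$ with $p'=\min\{p,1+1/\log n\}$ when $p$ is close to $1$, and $p'=p$ otherwise. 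Its range on $Q\subseteq B_p^n(R_p)$ is then $R_d^2=\tilde{\cO}(R_p^2)$, where the loss is at most a $\log n$ factor.

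The one-step argument is the one the paper already gives for \eqref{eq:projSGD}, with the Euclidean distance replaced by the Bregman divergence $V$. It yields
$$\sum_{k}\langle \nabla_x f(x^k,\xi^k),x^k-x^\star\rangle\le \frac{R_d^2}{\gamma}+\frac{\gamma N M_p^2}{2}.$$
We then split $\nabla_x f(x^k,\xi^k)=\nabla f(x^k)+\Delta^k$. Convexity of $f$ and Jensen's inequality give
$$N\left(f(\bar x^N)-f(x^\star)\right)\le \frac{R_d^2}{\gamma}+\frac{\gamma NM_p^2}{2}-\sum_k\langle\Delta^k,x^k-x^\star\rangle.$$

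\textbf{Main obstacle: the martingale term.} The variables $\langle\Delta^k,x^k-x^\star\rangle$ form a martingale-difference sequence bounded by $2M_p\cdot 2R_p$, because $x^k$ is measurable with respect to $\xi^1,\dots,\xi^{k-1}$. Azuma--Hoeffding therefore bounds the sum by $\cO(M_pR_p\sqrt{N\log(1/\beta)})$ with probability at least $1-\beta$. Choosing $\gamma=R_d/(M_p\sqrt N)$ gives
$$f(\bar x^N)-f(x^\star)=\tilde{\cO}\left(\frac{M_pR_p\sqrt{\log(1/\beta)}}{\sqrt N}\right),$$
and solving for $N$ gives the stated bound. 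The delicate point is making sure the prox-function does not cost a polynomial factor in $n$ near $p=1$; the choice of $p'$ above is exactly what keeps this loss logarithmic.

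\textbf{Upper bound, $p>2$.} We use plain projected SGD \eqref{eq:projSGD}. Since $\|\cdot\|_2\le n^{1/2-1/p}\|\cdot\|_p$, we have $Q\subseteq B_2^n\left(n^{1/2-1/p}R_p\right)$, and the Lipschitz constant in the $2$-norm is $M_2$. Theorem~\ref{Th:Online_Prob} (or the Azuma argument above with $p=2$) then gives
$$\varepsilon\lesssim M_2\,n^{1/2-1/p}R_p\sqrt{\log(1/\beta)/N},$$
which is the claimed $N$.

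\textbf{Optimality.} We would cite the lower bounds of \cite{nemirovski1983problem}. Their mechanism is as follows. One builds a family of $2^n$ (or $n$) linear or piecewise-linear functions $f_\sigma(x,\xi)$ indexed by sign patterns $\sigma$, for which distinguishing $\sigma$ from noisy first-order or sample information needs $\Omega\left(M_p^2R_p^2/\varepsilon^2\right)$ observations; this follows from a KL-divergence (Fano/Assouad) argument between Bernoulli distributions with biases $\tfrac12\pm\varepsilon/(M_pR_p)$. For $p>2$, an analogous construction on the $\ell_p$-ball with $M_2$-Lipschitz losses produces the factor $n^{1-2/p}$. This lower-bound part is information-theoretic and applies to any procedure, not only to SGD; that is why the statement speaks of "all reasonable ways" to generate $\tilde{x}^N$.
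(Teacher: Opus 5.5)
\textbf{The gap is in the choice of the prox-function, the step you flag as delicate.} The rest of your argument is fine: the Bregman regret bound, the martingale decomposition with Azuma--Hoeffding, and the $\ell_2$-diameter reduction for $p>2$ all go through. This is also the route the paper intends; it gives no proof, only pointers to Mirror Descent in \cite{nemirovski1983problem,nemirovski2009robust}, the footnote on the $\ell_2$-diameter, and the lower bound of \cite{agarwal2012information}.

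Your clipping of $p'$ goes the wrong way. With $p'=\min\{p,1+1/\log n\}$, every $p<1+1/\log n$ (the ``close to $1$'' case) gives $p'=p$. Then $d(x)=\frac{1}{2(p-1)}\|x\|_p^2$, whose range on $B_p^n(R_p)$ is $\frac{R_p^2}{2(p-1)}$. This is unbounded as $p\downarrow 1$, and $d$ is undefined at $p=1$, which is the main case of interest ($\ell_1$-ball, simplex). So $R_d^2=\tilde{\cO}(R_p^2)$ fails exactly where this choice of $p'$ was supposed to secure it. Your transfer of strong convexity from $\|\cdot\|_{p'}$ to $\|\cdot\|_p$ works only because $p'\le p$, and that same inequality is what makes the constant $1/(p'-1)$ blow up.

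\textbf{The fix.} Take $p'=\max\{p,1+1/\log n\}$ (for $n\ge 3$, so that $p'\le 2$). Now $\|\cdot\|_{p'}\le\|\cdot\|_p$, so strong convexity no longer transfers for free. You need the H\"older comparison $\|x\|_p\le n^{1/p-1/p'}\|x\|_{p'}\le e\|x\|_{p'}$. This holds because either $p'=p$, or $p'=1+1/\log n$ and then $1/p-1/p'\le 1-1/p'\le p'-1=1/\log n$. Hence $d(x)=\frac{e^2}{2(p'-1)}\|x\|_{p'}^2$ is $1$-strongly convex with respect to $\|\cdot\|_p$. Since $\|x\|_{p'}\le\|x\|_p$ on $B_p^n(R_p)$, its range there is at most $\frac{e^2}{2(p'-1)}R_p^2\le\frac{e^2\log n}{2}R_p^2$. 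With this $d$ your argument gives the stated bound.

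\textbf{Two minor points.}
\begin{itemize}
\item Theorem~\ref{Th:Online_Prob} as stated has $\log(1/\beta)$, not $\sqrt{\log(1/\beta)}$, in the error. Citing it for $p>2$ would therefore give $N\propto\log^2(1/\beta)$; it is your Azuma argument that delivers the stated $\log(1/\beta)$.
\item For $p>2$, note that $\|\cdot\|_p\le\|\cdot\|_2$. So under \eqref{eq:Lip_p} the $2$-norm Lipschitz constant $M_2$ in the statement exists and satisfies $M_2\le M_p$.
\end{itemize}
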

It seems that online setting (e.g. for $p=2$) is better than offline in the sample complexity for convex $f(x,\xi)$ in $x$. 
In the next section we show that the gap factor $n$ in the sample complexity bounds between online and offline approaches can be eliminated by the proper regularization.

\subsection{Strongly convex case and regularization}\label{Sec:SCR}
If $f(x,\xi)$ is \textit{$\mu_p$-strongly convex} in $x$ in \dd{the} $p$-norm ($p\ge 1$), that is for all $\xi$ and $x,y \in Q$:
\begin{equation}\label{eq:Scv}
   f(y,\xi)\ge f(x,\xi) + \langle \nabla_x f(x,\xi), y -x \rangle + \frac{\mu_p}{2}\|y-x\|_p^2,
\end{equation}
then Theorem~\ref{Th:ShapiroNonConvex} can be improved.

\begin{theorem}\label{Th:StrCnv} Assume that \eqref{eq:Lip_p} and \eqref{eq:Scv} are satisfied. Then \eqref{eq:P_tilde} holds with 
\begin{center}
$\tilde{x}^N\left(\left\{\xi^k\right\}_{k=1}^N\right)=\bar{x}_{\delta,\beta/2}^N\left(\{\xi^k\}_{k=1}^N\right)$, $\delta = \frac{\varepsilon^2 \mu_p}{8M_p^2}$ and $\tilde{x}^N\left(\left\{\xi^k\right\}_{k=1}^N\right)=\bar{x}^N\left(\{\xi^k\}_{k=1}^N\right)$ 
\end{center}
generated by the proper (restarted\footnote{See the proof of Theorem~\ref{Th:r_online} for $p = 2$ and \cite{juditsky2014deterministic} in the general case.} Mirror Descent) modification of \eqref{eq:projSGD}, when
\begin{equation}\label{eq:N_ERM_scv}
N(\varepsilon,\beta) = \tilde{\cO}\left(\frac{M_p^2}{\mu_p\varepsilon}\log\left(\frac{\log\left( M_p^2/(\mu_p\varepsilon)\right)}{\beta}\right)\right), p\in[1,2].
\end{equation}
This bound  is optimal up to a logarithmic factor in the wide class of all reasonable ways to generate $\tilde{x}^N\left(\left\{\xi^k\right\}_{k=1}^N\right)$. Moreover, this bound corresponds to \eqref{eq:GLSC2} and \eqref{eq:SCOR} when $p=2$ and the bound on $\delta$ derived from the condition that the first term in RHS of  \eqref{eq:GLSC2} equals $\varepsilon/2$. The bound on $\delta$ cannot also be improved up to a numerical constant.
\end{theorem}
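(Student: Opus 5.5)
The statement has two halves, and I would handle them separately: the offline (ERM) half and the online (restarted SGD/Mirror Descent) half. The optimality claims follow from the standard lower bounds of \cite{nemirovski1983problem}.

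\textbf{Offline half.} I would use a stability argument in the spirit of \cite{shalev2009stochastic}, not uniform convergence. The reason is that a covering argument as in Theorem~\ref{Th:ShapiroNonConvex} would reintroduce the factor $n$.

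First I would show that ERM is uniformly stable. Let $\hat{x}^N$ and $\hat{x}^{N,(i)}$ be the empirical minimizers on samples differing in the $i$-th point. Strong convexity of $\bar{f}$ (by \eqref{eq:Scv}, $\bar f$ is $\mu_p$-strongly convex) together with \eqref{eq:Lip_p} gives $\|\hat{x}^N-\hat{x}^{N,(i)}\|_p\le 2M_p/(\mu_pN)$. Consequently the replace-one stability of the loss is $2M_p^2/(\mu_p N)$.

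Uniform stability only yields an expected bound $\E[f(\hat{x}^N)-f(x^\star)]\le 2M_p^2/(\mu_pN)$. To get a high-probability version with only a $\log(1/\beta)$ factor, I would invoke the sharp high-probability stability bounds (Feldman--Vondrák / Bousquet--Klochkov--Zhivotovskiy). They give $f(\hat x^N)-f(x^\star)=\tilde{\cO}\big(\tfrac{M_p^2}{\mu_pN}\log\tfrac1\beta\big)$. This step is the main obstacle, because the naive McDiarmid argument loses a factor $\sqrt N$. If the sharp bounds are not allowed, my fallback is to obtain a constant-probability bound via Markov's inequality. I would then boost the confidence by running $\cO(\log(1/\beta))$ independent ERMs on disjoint batches and selecting among them with a fresh validation batch. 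Since all losses are $M_p R_p$-bounded, the selection step is a finite-class Hoeffding argument; for the fast rate it uses the variance bound $\mathrm{Var}\le \frac{4M_p^2}{\mu_p}(f(x)-f(x^\star))$ from the footnote before Theorem~\ref{Th:Online_Prob}.

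Next I pass from the exact minimizer to a $\delta$-solution $\bar{x}^N_{\delta,\beta/2}$. By strong convexity of $\bar f$, $\|\bar{x}^N_{\delta,\beta/2}-\hat{x}^N\|_p\le\sqrt{2\delta/\mu_p}$, and since $f$ is $M_p$-Lipschitz,
\[
f\big(\bar{x}^N_{\delta,\beta/2}\big)-f(\hat x^N)\le M_p\sqrt{2\delta/\mu_p}.
\]
With $\delta=\varepsilon^2\mu_p/(8M_p^2)$ the right-hand side equals $\varepsilon/2$. This is exactly the first term of \eqref{eq:GLSC2}, which explains the stated choice of $\delta$. It is also tight: take $f(x)=\frac{\mu}{2}x^2+Mx$ near the boundary, where the Lipschitz term is attained. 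A union bound over the two failure events, each of probability $\beta/2$, then gives \eqref{eq:P_tilde} with $N$ as in \eqref{eq:N_ERM_scv}.

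\textbf{Online half.} For $p=2$ I would run the projected SGD \eqref{eq:projSGD} in epochs and restart it. In epoch $j$ I take radius $R_j$ with $R_{j+1}^2=R_j^2/2$, a constant step size, and $N_j\asymp M_2^2/(\mu_2^2R_j^2)\cdot\log(J/\beta)$ steps, where $J$ is the number of epochs.

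Theorem~\ref{Th:Online_Prob} (its convex part) controls the gap after each epoch:
\[
f(\bar x_j)-f(x^\star)\le \mu_2R_j^2/4.
\]
Strong convexity of $f$ then gives $\|\bar x_j-x^\star\|_2^2\le R_j^2/2=R_{j+1}^2$, so the next epoch starts within the halved radius. Taking $J=\log_2(M_2^2/(\mu_2\varepsilon))$ epochs makes the final gap at most $\varepsilon$. A union bound over the $J$ epochs, each with confidence $\beta/J$, produces the $\log(\log(\cdot)/\beta)$ factor. Summing the $N_j$, which grow geometrically, gives $N=\cO\big(\frac{M_2^2}{\mu_2\varepsilon}\log\frac{J}{\beta}\big)$.

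For general $p\in[1,2]$ I would replace SGD by Mirror Descent with a $\cO(\log n)$-strongly convex prox function, as in \cite{juditsky2014deterministic}. This costs only logarithmic factors, which are hidden in the $\tilde{\cO}$ of \eqref{eq:N_ERM_scv}.
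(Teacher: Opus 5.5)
Your main argument is the paper's. It uses uniform stability $\|\hat x^N-\hat x^{N,(i)}\|_p\le 2M_p/(\mu_pN)$ of the strongly convex ERM. The paper uses leave-one-out rather than replace-one and gets $4M_p^2/(\mu_pN)$ instead of your $2M_p^2/(\mu_pN)$. It then transfers to a $\delta$-solution via $\|x-\hat x^N\|_p\le\sqrt{2\delta/\mu_p}$ and Lipschitzness, which is exactly where $\delta=\varepsilon^2\mu_p/(8M_p^2)$ comes from. The paper proves only the expectation version and defers high probability to confidence amplification \cite{davis2021low}, which is close in spirit to your fallback. For the online half it points to the restart scheme in the proof of Theorem~\ref{Th:r_online}, as you do.

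Two precision points.
\begin{itemize}
\item The sharp bounds of \cite{feldman2019high} and of Bousquet--Klochkov--Zhivotovskiy control the generalization gap only up to an additive term of order $B\sqrt{\log(1/\beta)/N}$, where $B$ is the range of the loss. So they do not by themselves give $\tilde{\cO}\bigl(\frac{M_p^2}{\mu_pN}\log\frac{1}{\beta}\bigr)$. You need the Bernstein-condition refinement \cite{klochkov2021stability}, with the condition supplied by $\E_\xi\bigl(f(x,\xi)-f(x^\star,\xi)\bigr)^2\le M_p^2\|x-x^\star\|_p^2\le\frac{2M_p^2}{\mu_p}\bigl(f(x)-f(x^\star)\bigr)$.
\item In the online half, \eqref{eq:COR} as stated carries $\log(1/\beta)$, which would force $N_j\propto\log^2(J/\beta)$. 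Your single $\log(J/\beta)$ needs the $\sqrt{\log}$ Azuma bound. That in turn requires epoch $j$ to project onto $Q\cap B_2^n(x_j^1,R_j)$, which contains $x^\star$ by induction, so that the martingale increments are $\cO(M_2R_j)$.
\end{itemize}

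The step that fails is your tightness example for $\delta$. A deterministic $f(x)=\frac{\mu}{2}x^2+Mx$ has $\bar f\equiv f$, so every $\delta$-solution of the empirical problem is $\delta$-optimal for $f$. Moreover, at a boundary minimizer with slope $M$ the growth is linear, so the distance bound $\sqrt{2\delta/\mu_p}$ is loose exactly where the Lipschitz step is tight.

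Both steps are tight at once only if $\bar f$ is stationary at $\hat x^N$ along a direction in which the population $f$ has slope of order $M_p$. That is, the empirical and population one-sided derivatives at $\hat x^N$ must differ by order $M_p$. In one dimension this cannot happen for large $N$: the one-sided derivatives are averages of bounded monotone functions and concentrate uniformly at rate $M_p/\sqrt N$. So you need a high-dimensional, Feldman-type instance.

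For example:
\begin{itemize}
\item Take orthonormal $e_1,\dots,e_m$ with $m\ge 2^N$. Let the random set $V$ contain each $e_i$ independently with probability $1/2$.
\item Put $f(x,V)=\max\{0,\max_{v\in V}\langle v,x\rangle\}+\frac{\mu}{2}\|x\|_2^2$ on $B_2^n(1)$, so $\mu_2=\mu$, $M_2\le 1+\mu$ and $x^\star=0$.
\item With probability at least $1-e^{-1}$, some $w=e_i$ lies in none of $V_1,\dots,V_N$. Then $\hat x^N=0$ and $\bar f(tw)=\frac{\mu}{2}t^2$, while $f(tw)=\frac{t}{2}+\frac{\mu}{2}t^2$.
\item Taking $t=\sqrt{2\delta/\mu}$ gives a $\delta$-solution with excess risk at least $\frac{1}{2}\sqrt{2\delta/\mu}$. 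This forces $\delta\lesssim\varepsilon^2\mu_2/M_2^2$.
\end{itemize}

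The paper does not prove this tightness claim, nor the optimality of \eqref{eq:N_ERM_scv}. So this is a repair to your proposal rather than a point where you diverge from the paper.
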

\begin{proof} For simplicity, we prove \eqref{eq:N_ERM_scv} only in terms of expectation, rather than high probability bounds.

The proof is based on the concept of uniform stability~\cite{bousquet2002stability}. Denote
\begin{equation*}
    \bar{f}^{(i)}(x) = \frac{1}{N}\sum_{\substack{k \neq i}}^N f(x,\xi^k)
\end{equation*}
the empirical average without the $i$-th sample and let $\bar{x}^{(i)} = \argmin\limits_{x\in Q \subseteq \R^n}  \bar{f}^{(i)}(x)$ be its minimizer. We first establish that the empirical minimizer is $\frac{2M_p^2}{\mu_p N}$ uniformly stable, i.e. that $|f(\bar{x}, \xi) - f(\bar{x}^{(i)}, \xi)| \leq \frac{2M_p^2}{\mu_p N}$ for all samples and all $\xi$.

To do so, we first calculate:

\begin{multline}\label{eq:un_stab_pre}
\bar f(\bar x^{(i)})-\bar f(\bar x)
=
\frac{f(\bar x^{(i)},\xi^i)-f(\bar x,\xi^i)}{N}
+\bar f^{(i)}(\bar x^{(i)})-\bar f^{(i)}(\bar x)
\\
\le
\frac{|f(\bar x^{(i)},\xi^i)-f(\bar x,\xi^i)|}{N}
\le
\frac{M_p}{N}\|\bar x^{(i)}-\bar x\|_p.
\end{multline}
where the first inequality follows from the fact that $\bar{x}^{(i)}$ is the minimizer of $\bar{f}^{(i)}(x)$ and in the second inequality we use the Lipschitz property~\eqref{eq:Lip_p}.  But from strong convexity of $\bar{f}(x)$  and the fact that $\bar{x}$ is the minimizer of $\bar{f}(x)$ we also have that $\frac{\mu_p}{2} \|\bar{x}^{(i)} - \bar{x}\|_p^2 \leq \bar{f}(\bar{x}^{(i)}) - \bar{f}(\bar{x})$.  Combining this with~\eqref{eq:un_stab_pre} we obtain $\|\bar{x}^{(i)} - \bar{x}\|_p \leq \frac{2M_p}{\mu_p N}$ and from Lipschitz continuity~\eqref{eq:Lip_p} we get
\begin{equation*}
    \left|f(\bar{x}^{(i)}, \xi) - f(\bar{x}, \xi)\right| \leq \frac{2M_p^2}{\mu_p N}.
\end{equation*}

Now, from \cite[p.508]{bousquet2002stability} we have that
\begin{equation*}
    \EE_\xi \left[f(\bar{x}) - \bar{f}(\bar{x})\right] \leq  \frac{4M_p^2}{\mu_p N}
\end{equation*}

with $\bar{x}^{(+)} = \argmin\limits_{x\in Q \subseteq \R^n}  \frac{1}{N+1}\sum_{\substack{k=1}}^N f(x,\xi^k) + \frac{1}{N+1}f(x, \xi')$
\begin{multline}
    \EE_{\Vec{\xi}, \xi'}  \left[f(\bar{x}, \xi') - \bar{f}(\bar{x})\right] = \frac{1}{N} \sum_{\substack{k=1}}^N \EE_{\Vec{\xi}, \xi'}  \left[f(\bar{x}, \xi') - f(\bar{x}, \xi^k) \right]
    \\
    = \frac{1}{N} \sum_{\substack{k \neq i}}^N \EE_{\Vec{\xi}, \xi'}  \left[f(\bar{x}, \xi') -f(\bar{x}^{(+)}) + f(\bar{x}^{(+)}) - f(\bar{x}, \xi^k) \right]
    \\
    \text{ since $\EE_{\Vec{\xi}, \xi'} f(\bar{x}^{(+)}, \xi') = \EE_{\Vec{\xi}, \xi'} f(\bar{x}^{(+)}, \xi^k)$}
    \\
    = \frac{1}{N} \sum_{\substack{k=1}}^N \EE_{\Vec{\xi}, \xi'}  \left[f(\bar{x}, \xi') -f(\bar{x}^{(+)}, \xi') + f(\bar{x}^{(+)}, \xi^k) - f(\bar{x}, \xi^k) \right]
    \\
    \leq \frac{4M_p^2}{\mu_p (N+1)} \leq \frac{4M_p^2}{\mu_p N}.
\end{multline}

Adding $\EE_\xi \left[\bar{f}(x^*) - f(x^*)\right] = 0$ with $x^* = \argmin\limits_{x\in Q \subseteq \R^n} f(x)$ to the left-hand side and using the fact that $\bar{x}$ is the minimizer of $\bar{f}(x)$:
\begin{multline}\label{eq:expect_bound}
    \frac{4M_p^2}{\mu_p N} \geq \EE_\xi \left[f(\bar{x}) - \bar{f}(\bar{x})\right] = \EE_\xi \left[f(\bar{x}) - f(x^*)\right] + \EE_\xi \left[\bar{f}(x^*) - \bar{f}(\bar{x})\right] 
    \\
    \geq \EE_\xi \left[f(\bar{x}) - f(x^*)\right].
\end{multline}



Due to the fact that the empirical objective $\bar{f}$ is strongly convex, any approximate empirical minimizer must be close to $\bar{x}$, and due to the fact that the expected objective $F$ is Lipschitz-continuous any vector close to $\bar{x}$ cannot have a much worse value than $\bar{x}$. 
Therefore, for every $x \in Q$ such that $\bar f(x)-\bar f(\bar x)\le \delta$, we have

\begin{align} \label{eq:emp_bound}
    f(x) - f(x^*) &= f(x) - f(\bar{x}) + f(\bar{x}) - f(x^*) \leq M_p \|x - \bar{x}\|_p + f(\bar{x}) - f(x^*)
   \notag \\
    &\leq \sqrt{\frac{2M_p^2}{\mu_p}} \sqrt{\bar{f}(x) - \bar{f}(\bar{x}}) + f(\bar{x}) - f(x^*) \leq \sqrt{\frac{2M_p^2}{\mu_p} \delta} + f(\bar{x}) - f(x^*).
\end{align}

Taking expectation in \eqref{eq:emp_bound}, using \eqref{eq:expect_bound} and the fact that
\[
\bar f(x)-\bar f(\bar x)\le \delta,
\]
since $x$ is a $\delta$-solution of the empirical problem, we obtain
\begin{equation*}
    f(x) - f(x^*) \leq \sqrt{\frac{2M_p^2}{\mu_p} \delta}+ \frac{4M_p^2}{\mu_p N}.
\end{equation*}

Setting $\delta = \frac{\varepsilon^2 \mu_p }{8M_p^2}$ and $N = \frac{8M_p^2}{\varepsilon \mu_p}, p\in[1,2]$ we have that we have $f(x) - f(x^*) \leq \varepsilon$.  

The high-probability bound follows from a standard large-deviation
(confidence amplification) argument for strongly convex stochastic
optimization; see \cite{davis2021low}.
\end{proof}

We emphasise that in Theorem~\ref{Th:ShapiroNonConvex} $\delta \simeq \varepsilon$, but in Theorem~\ref{Th:StrCnv} $\delta \simeq \frac{\varepsilon^2\mu_p}{M_p^2}$ and the last bound cannot be weakened.

Based on Theorem~\ref{Th:StrCnv} one can derive the result that improve Theorem~\ref{Th:ShapiroNonConvex} in the convex case ($\mu_p \simeq 0$). Assume for the clarity that $p=2$. 
\begin{lemma}[Tiknonov's regularization]\label{Lm:reg}
Consider regularized stochastic optimization problem
\begin{equation}\label{eq:regularization}
    \min\limits_{x\in Q} \left[f_{\mu}(x): = \EE_{\xi} f(x,\xi) + \frac{\mu}{2}\|x\|_2^2\right]
\end{equation}
with $\mu = \varepsilon/R_2^2$. Assume that 
$$f_{\mu}(\tilde{x}) - \min\limits_{x\in Q} f_{\mu}(x) \le \frac{\varepsilon}{2}.$$
Then
$$f(\tilde{x}) - \min\limits_{x\in Q} f(x) = f(\tilde{x}) - f(x^*) \le \varepsilon.$$

\end{lemma}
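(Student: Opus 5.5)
The plan is a direct chain of inequalities that sandwiches $f$ between $f_\mu$ and $f_\mu$ minus the regularizer. No probabilistic or stability argument is needed: the statement is purely deterministic once $\tilde{x}$ is an $\varepsilon/2$-solution of \eqref{eq:regularization}. Throughout, I will use the standing assumption of this section, $Q\subseteq B_2^n(R_2)$, so that $\|x\|_2\le R_2$ for every $x\in Q$, and in particular for the minimizer $x^*\in Q$ of $f$.

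\textbf{Step 1 (lower sandwich).} Since $\frac{\mu}{2}\|\tilde{x}\|_2^2\ge 0$, we have $f(\tilde{x})\le f_\mu(\tilde{x})$.

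\textbf{Step 2 (use the hypothesis).} By assumption, $f_\mu(\tilde{x})\le \min_{x\in Q} f_\mu(x)+\varepsilon/2$.

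\textbf{Step 3 (compare the minimum with $x^*$).} Bound the minimum of $f_\mu$ by its value at the feasible point $x^*$:
\begin{equation*}
\min_{x\in Q} f_\mu(x)\le f_\mu(x^*) = f(x^*)+\frac{\mu}{2}\|x^*\|_2^2 \le f(x^*)+\frac{\varepsilon}{2R_2^2}\,R_2^2 = f(x^*)+\frac{\varepsilon}{2},
\end{equation*}
where the middle inequality uses $\mu=\varepsilon/R_2^2$ and $\|x^*\|_2\le R_2$.

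Chaining Steps 1–3 gives
\begin{equation*}
f(\tilde{x})\le f(x^*)+\frac{\varepsilon}{2}+\frac{\varepsilon}{2}=f(x^*)+\varepsilon,
\end{equation*}
which is the claim.

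The only point needing care is Step 3. The regularization bias $\frac{\mu}{2}\|x^*\|_2^2$ must be controlled by the radius of $Q$, which requires that $Q$ lie in the centered ball $B_2^n(R_2)$ (the regularizer is $\|x\|_2^2$, centered at $0$). If $Q$ were only of diameter $R_2$ but not centered, I would recenter the regularizer at the starting point $x^1$, replacing $\|x\|_2^2$ by $\|x-x^1\|_2^2$, and repeat the same argument with $\|x^*-x^1\|_2\le R_2$.
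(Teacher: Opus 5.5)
Your proof is correct and is essentially the paper's argument: the paper writes the same chain $f(\tilde{x})-f(x^*)\le f_\mu(\tilde{x})-\bigl(f_\mu(x^*)-\frac{\mu}{2}\|x^*\|_2^2\bigr)\le f_\mu(\tilde{x})-\min_{x\in Q}f_\mu(x)+\frac{\mu}{2}R_2^2$. Like you, it uses $f\le f_\mu$, $\min_{x\in Q} f_\mu\le f_\mu(x^*)$ and $\|x^*\|_2\le R_2$; your remark that this last bound needs $Q\subseteq B_2^n(R_2)$ centered at the origin makes explicit an assumption the paper uses only implicitly.
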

\begin{proof}
{Indeed,
\begin{align*}
   f(\tilde{x}) -  f(x^*) 
   &\le  f_{\mu}(\tilde{x}) - \left(f_{\mu}(x^*) - \frac{\mu}{2}\|x^*\|_2^2 \right) \\
   &\le f_{\mu}(\tilde{x}) - \min\limits_{x\in Q} f_{\mu}(x) + \frac{\mu}{2}R_2^2 \le \frac{\varepsilon}{2} + \frac{\varepsilon}{2} = \varepsilon.
\end{align*}}
\end{proof}
The combination of Theorem~\ref{Th:StrCnv} and Lemma~\ref{Lm:reg} allow to improve the result of Theorem~\ref{Th:ShapiroNonConvex} in the convex case.
\begin{theorem}[the role of the regularization]\label{Th:conv_reg} Assume that \eqref{eq:Lip_p} \dd{is satisfied}
and $f(x,\xi)$ is convex in $x$ in $Q$. Then for 
$\tilde{x}^N\left(\left\{\xi^k\right\}_{k=1}^N\right)=\bar{x}_{\delta,\beta/2}^N\left(\{\xi^k\}_{k=1}^N\right)$
to be a $\left(\delta=\frac{\varepsilon^3}{8M_2^2R_2^2},\frac{\beta}{2}\right)$-solution of the empirical version of \eqref{eq:regularization}:
\begin{equation}
  \min\limits_{x\in Q}  \left[\frac{1}{N}\sum_{k=1}^N f(x,\xi^k) + \frac{\varepsilon}{2R_2^2}\|x\|_2^2\right],  
\end{equation}
\eqref{eq:P_tilde} holds with
\begin{equation*}
N(\varepsilon,\beta) = \tilde{\cO}\left(\frac{M_2^2R_2^2}{\varepsilon^2}\log\left(\frac{\log\left( M_2R_2/\varepsilon)\right)}{\beta}\right)\right).
\end{equation*}
Moreover, in the general case $p\in [1,2]$ the described above technique (with proper regularization) allows to obtain the bounds on $N$ that correspond to the bounds in Theorem~\ref{Th:Agarwal} up to logarithmic factors. 
\end{theorem}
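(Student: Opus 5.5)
The plan is to combine Theorem~\ref{Th:StrCnv} (applied to the regularized loss) with Lemma~\ref{Lm:reg}, as the text preceding the statement suggests. Set $\mu=\varepsilon/R_2^2$ and define the regularized stochastic loss $f_\mu(x,\xi):=f(x,\xi)+\frac{\mu}{2}\|x\|_2^2$. Then $\EE_\xi f_\mu(x,\xi)=f_\mu(x)$ from \eqref{eq:regularization}, and the empirical version of \eqref{eq:regularization} is exactly the ERM for $f_\mu(\cdot,\xi)$ (note the factor $\frac{\varepsilon}{2R_2^2}=\frac{\mu}{2}$). Since $f(\cdot,\xi)$ is convex, $f_\mu(\cdot,\xi)$ is $\mu$-strongly convex in the $2$-norm, so \eqref{eq:Scv} holds with $\mu_2=\mu$.

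Next I check the Lipschitz condition \eqref{eq:Lip_p} for $f_\mu(\cdot,\xi)$ on $Q\subseteq B_2^n(R_2)$. The gradient of the regularizer is bounded by $\mu R_2=\varepsilon/R_2$. We may assume $\varepsilon\le M_2R_2$, since otherwise any point of $Q$ is already $\varepsilon$-optimal (indeed $f(x)-f(x^\star)\le M_2\cdot 2R_2$, and a constant factor is absorbed in $\cO$). Under this assumption the Lipschitz constant of $f_\mu(\cdot,\xi)$ is $M_2'\le M_2+\varepsilon/R_2\le 2M_2$.

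Now I apply Theorem~\ref{Th:StrCnv} with $p=2$ to $f_\mu$ at accuracy $\varepsilon/2$ and confidence $\beta$. Its $\delta$ equals
\[
\frac{(\varepsilon/2)^2\mu}{8M_2'^2}\asymp \frac{\varepsilon^3}{M_2^2R_2^2},
\]
which matches the prescribed $\delta=\frac{\varepsilon^3}{8M_2^2R_2^2}$ up to a numerical constant; that constant is absorbed by running Theorem~\ref{Th:StrCnv} at accuracy $c\varepsilon$ for a suitable absolute $c$. The required sample size is
\[
N=\tilde\cO\!\left(\frac{M_2'^2}{\mu\,\varepsilon}\log\frac{\log(M_2'^2/(\mu\varepsilon))}{\beta}\right)=\tilde\cO\!\left(\frac{M_2^2R_2^2}{\varepsilon^2}\log\frac{\log(M_2R_2/\varepsilon)}{\beta}\right).
\]
With probability at least $1-\beta$ this gives $f_\mu(\tilde x)-\min_Q f_\mu\le\varepsilon/2$. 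Lemma~\ref{Lm:reg}, which is deterministic, then yields $f(\tilde x)-f(x^\star)\le\varepsilon$, which is \eqref{eq:P_tilde}.

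For general $p\in[1,2]$ I would replace $\frac{\mu}{2}\|x\|_2^2$ by a regularizer $\frac{\mu}{2}d(x)$ with $d$ $1$-strongly convex w.r.t.\ $\|\cdot\|_p$. For $p$ near $1$ one can take $d(x)=\frac{1}{q-1}\|x\|_q^2$ with $q=1+1/\log n$, whose range over $B_p^n(R_p)$ is $\cO(R_p^2\log n)$. Choosing $\mu=\varepsilon/(R_p^2\log n)$ and redoing the two steps above with $\mu_p=\mu$ in Theorem~\ref{Th:StrCnv} gives $N=\tilde\cO(M_p^2R_p^2/\varepsilon^2)$, matching Theorem~\ref{Th:Agarwal} up to logarithmic factors.

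The main obstacle is the bookkeeping of constants: keeping the Lipschitz constant of the regularized loss comparable to $M_2$ (hence the reduction to $\varepsilon\le M_2R_2$), matching $\delta$ exactly, and handling the extra $\log n$ in the $p$-norm case. The probabilistic content is entirely inherited from Theorem~\ref{Th:StrCnv}.
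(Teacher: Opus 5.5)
Your proposal is correct and follows the paper's route. The paper's justification is exactly the combination of Theorem~\ref{Th:StrCnv}, applied to the $\mu$-strongly convex loss $f(x,\xi)+\frac{\mu}{2}\|x\|_2^2$ with $\mu=\varepsilon/R_2^2$, with Lemma~\ref{Lm:reg}. Your check that the regularized loss stays $\cO(M_2)$-Lipschitz, and your non-Euclidean regularizer for $p\in[1,2]$, fill in details the paper leaves implicit.

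One correction on constants: the prescribed $\delta=\frac{\varepsilon^3}{8M_2^2R_2^2}$ is \emph{larger} than what Theorem~\ref{Th:StrCnv} needs at accuracy $\varepsilon/2$, so the factor is not absorbed the way you claim. Matching it forces running that theorem at accuracy $c\varepsilon$ with $c=M_2'/M_2\ge 1$. That gives $f_\mu(\tilde x)-\min_Q f_\mu\le c\varepsilon$, not $\varepsilon/2$, and hence $f(\tilde x)-f(x^\star)\le (c+1/2)\varepsilon$. So the statement holds only up to an absolute rescaling of $\varepsilon$. The paper has the same slack: its $\delta$ is Theorem~\ref{Th:StrCnv} at accuracy $\varepsilon$, while Lemma~\ref{Lm:reg} asks for $\varepsilon/2$.
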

To conclude, from the Theorem~\ref{Th:conv_reg} we derive that in the sample complexity bounds online approach and offline approach (with proper regularization in the convex case) are equivalent up to a logarithmic factors.

\subsection{$s$-growth condition}\label{Sec:rgrwt} 
We say that $f(x): = \EE_{\xi} f(x,\xi)$ satisfies \textit{$s$-growth condition} ($s \ge 1$) on $Q_{2\varepsilon}$ if for all $$x\in Q_{2\varepsilon}:=\left\{x\in Q:  f(x)\le f(x^*) + 2\varepsilon \right\}:$$
\begin{equation}\label{eq:r}
    f(x) - f(x^*) \ge \mu_{p,s}\|x - x^*\|_p^{s},
\end{equation}
where $x^*$ is a projection of $x$ (in \dd{the} $p$-norm) on the set of solutions of \eqref{eq:SOP}.

We relax the condition \eqref{eq:Lip_p} as follows (see also \eqref{eq:MGF}): for all $x,y \in Q$ \textit{sub-Gaussian variance} of $f(y,\xi) - f(x,\xi) - \left( f(y) - f(x)\right)$ bounded from above by $\lambda^2\|y-x\|_p^2$, i.e. for all $t\in\R$:
\begin{equation}\label{eq:var}
\EE_{\xi}\left[\exp\left\{t\cdot\left(f(y,\xi) - f(x,\xi) - \left( f(y) - f(x)\right)\right)\right\}\right]\le \exp\left\{t^2\lambda^2\|y-x\|_p^2/2\right\}.
\end{equation}
Note that if \eqref{eq:Lip_p} holds, then $\lambda^2 \le 2 M_p^2$.

\begin{theorem}\label{Th:ShapiroNonConvex2} Assume that $f(x,\xi)$ is convex in $x$ in Q and \eqref{eq:r}, \eqref{eq:var} \dm{are satisfied}.
Then \eqref{eq:P_tilde} holds with $\tilde{x}^N\left(\left\{\xi^k\right\}_{k=1}^N\right)=\bar{x}_{\varepsilon/2,\beta/2}^N\left(\{\xi^k\}_{k=1}^N\right)$ and
\begin{equation}\label{eq:N_ERM_r}
N(\varepsilon,\beta) = \cO\left(\frac{\lambda_p^2}{\mu_{p,s}^{2/s}\varepsilon^{2(s - 1)/s}}\left(n\log\left(\frac{M_p R_{p,\varepsilon}}{\varepsilon}\right)+\log\left(\frac{1}{\beta}\right)\right)\right),
\end{equation}
where
$R_{p,\varepsilon}$ is the diameter of $Q_{2\varepsilon}$ in \dd{the} $p$-norm. 
In particular, for $s = 1$  $R_{p,\varepsilon}\le 4\varepsilon/\mu_{p,1}$. 
Thus in the case of <<sharp minimum>> ($s = 1$) $N$ does not depend on $\varepsilon$ at all.

The bound \eqref{eq:N_ERM_r} is optimal up to a logarithmic factor.  
\end{theorem}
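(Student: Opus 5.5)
We adapt the proof of Theorem~\ref{Th:ShapiroNonConvex}, exploiting the growth condition \eqref{eq:r} so that the sub-Gaussian proxy $\lambda^2\|x-u(x)\|_p^2$ in the Chernoff step is much smaller than the crude $\lambda^2R_p^2$. The two-stage structure (finite set, then net) stays the same.

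\textbf{Step 1: localize to $Q_{2\varepsilon}$ using convexity.} If $\hat x$ is a $\delta$-solution of the empirical problem with $f(\hat x)>f(x^*)+\varepsilon$, let $x^*$ be the projection of $\hat x$ on the solution set. Move along the segment $[x^*,\hat x]$ to the point $z$ with $f(z)-f(x^*)=\varepsilon$; it exists because $f$ is continuous and convex. By convexity of $\bar f$, the point $z$ is still a $\delta$-solution of the empirical problem. The same segment argument shows that $x^*$ is also a projection of $z$ onto the solution set. So it suffices to rule out, with high probability, $\delta$-empirical solutions lying in the band $\{x\in Q_{2\varepsilon}: f(x)-f(x^*)\ge\varepsilon\}$.

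\textbf{Step 2: union bound over the band with $u(x)$ the projection.} Take a $\nu$-net $\mathcal N$ of $Q_{2\varepsilon}$ with $m\le(\rho R_{p,\varepsilon}/\nu)^n$, and let $u(x)$ be the projection of $x$ onto the solution set. For $x\in\mathcal N$ in the band, the growth condition gives
$$\|x-u(x)\|_p\le\left(\frac{f(x)-f(x^*)}{\mu_{p,s}}\right)^{1/s}.$$
Writing $\Delta=f(x)-f(x^*)\ge\varepsilon'$, \eqref{eq:var} with Chernoff yields
$$\mathds P\left(\bar f(x)\le\bar f(u(x))+\delta'\right)\le\exp\left(-\frac{N(\Delta-\delta')^2\mu_{p,s}^{2/s}}{2\lambda^2\Delta^{2/s}}\right).$$
For $\delta'\le\varepsilon'/2$ and $s\ge1$, the ratio $(\Delta-\delta')^2/\Delta^{2/s}$ is increasing in $\Delta$. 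Hence it is minimized at $\Delta=\varepsilon'$, giving the exponent $\gtrsim N\mu_{p,s}^{2/s}\varepsilon^{2-2/s}/\lambda^2$. Union bounding over at most $2m$ points gives $N$ of order \eqref{eq:N_ERM_r}, with $\log m=n\log(\rho R_{p,\varepsilon}/\nu)$.

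\textbf{Step 3: transfer from the net to arbitrary points.} Choose $\nu\simeq\varepsilon/M_p$. This gives the $\log(M_pR_{p,\varepsilon}/\varepsilon)$ factor. Lipschitz continuity of $f$ and $\bar f$ then controls the rounding loss, with $\varepsilon',\delta'$ adjusted exactly as in the proof of Theorem~\ref{Th:ShapiroNonConvex} and $\delta=\varepsilon/2$. One must use the Lipschitz constant $M_p$ (or its empirical version controlled by \eqref{eq:Lip_p_LD}) only in this rounding step, not in the variance term.

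\textbf{Sharp-minimum claim and lower bound.} For $s=1$, every $x\in Q_{2\varepsilon}$ satisfies $\mu_{p,1}\|x-x^*\|_p\le2\varepsilon$. Combining two such points gives the diameter bound $R_{p,\varepsilon}\le4\varepsilon/\mu_{p,1}$, so the logarithm becomes $\log(4M_p/\mu_{p,1})$ and $N$ is $\varepsilon$-free. For optimality, I would cite the standard two-point (Le Cam / Fano) lower bounds for the class with $s$-growth, e.g. $f(x,\xi)=|x|^s$-type losses with shifted noise; I would not prove this.

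\textbf{Main obstacle.} Step 1/2 requires the net point and its projection to inherit the right relation $\|x-u(x)\|_p^s\lesssim\Delta/\mu_{p,s}$, and the net must cover only the band and not all of $Q$. I would handle this by building the net on $Q_{2\varepsilon}$ only. I would first try to check that rounding $z$ to a net point keeps it inside $Q_{2\varepsilon}$, which holds for $\nu\le\varepsilon/(2M_p)$.
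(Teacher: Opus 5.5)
Your localization differs from the paper's, and as written it contains a false step, although the step can be repaired.

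\textbf{The flaw in Step~1.} You move from $\hat x$ toward the \emph{true} solution $x^*$. Convexity then only gives $\bar f(z)\le(1-t)\bar f(x^*)+t\bar f(\hat x)$, and nothing prevents $\bar f(x^*)>\min_Q\bar f+\delta$. So $z$ need not be a $\delta$-solution of the empirical problem. For example, take $n=1$, $f(x)=|x|$, $\bar f(x)=2|x-0.35|$, $\varepsilon=0.2$, $\delta=0.1$ and $\hat x=0.3$. Then $z=0.2$ and $\bar f(z)=0.3>\min\bar f+\delta$.

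What does hold is the pairwise bound $\bar f(z)\le\bar f(x^*)+t\delta\le\bar f(x^*)+\delta$, because $\bar f(\hat x)\le\min_Q\bar f+\delta\le\bar f(x^*)+\delta$. This is exactly the event $\{\bar f(x)\le\bar f(u(x))+\delta'\}$ that you union-bound in Step~2, so your strategy survives.

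\textbf{Consequence for Step~3.} Step~3 cannot be copied ``exactly'' from Theorem~\ref{Th:ShapiroNonConvex}. There, the rounding uses global $\delta$-optimality of $\hat x$ to make the net point $\delta'$-optimal for the reduced empirical problem, and you no longer have that. Instead you must round pairwise. A net point $x'$ with $\|x'-z\|_p\le\nu$ satisfies $f(x')-f(x^*)\ge\varepsilon-M_p\nu$ and $\bar f(x')\le\bar f(x^*)+\delta+M_p'\nu$. This is the Step~2 event at $x'$ only if $u(x')=x^*$, i.e.\ if the solution set is a singleton. The paper also relies on this; otherwise you need a second net on the solution set.

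\textbf{The side condition $\delta'\le\varepsilon'/2$.} Drop it. With the prescribed $\delta=\varepsilon/2$ it would force $\nu\le0$. It is also unnecessary, since $\frac{d}{d\Delta}\log\frac{(\Delta-\delta')^2}{\Delta^{2/s}}=\frac{2}{\Delta-\delta'}-\frac{2}{s\Delta}>0$ for every $\Delta>\delta'$ when $s\ge1$. Taking $\nu$ a small multiple of $\varepsilon/M_p'$ still gives $\varepsilon'-\delta'\asymp\varepsilon$.

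\textbf{Comparison with the paper.} With these repairs, your route is a genuine alternative. The paper argues by contradiction. Suppose an empirical $\delta$-solution $y$ lies outside $S^{2\varepsilon}$. The paper joins $y$ to $\bar x\in\text{Arg}\min_{S^{2\varepsilon}}\bar f$, which lies in $S^\varepsilon$ on the good event of the reduced problem. Along this segment $\bar f\le\bar f(\bar x)+\delta$, so the segment meets $S^{2\varepsilon}\setminus S^\varepsilon$ at a point that is $\delta$-optimal for the reduced empirical problem. This is a contradiction, so the $\delta$-solutions of the full and reduced empirical problems coincide. The paper then applies Theorem~\ref{Th:ShapiroNonConvex} verbatim on $S^{2\varepsilon}$ with the uniform proxy $\|x-x^*\|_p\le(2\varepsilon/\mu_{p,s})^{1/s}$.

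Your route avoids this bootstrapping: you need neither the reduced-problem event nor a minimizer of $\bar f$ over $S^{2\varepsilon}$, and you work directly with the pairs $(x,x^*)$. The price is the pairwise reformulation above. Your point-dependent proxy $(\Delta/\mu_{p,s})^{1/s}$ with the monotonicity argument is correct but buys nothing here. All relevant net points lie in $Q_{2\varepsilon}$, near the level set $f=f(x^*)+\varepsilon$, so the paper's uniform bound gives the same order.
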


\begin{proof}

It was assumed in the proof of the Theorem~\ref{Th:ShapiroNonConvex} that the set $Q$ has a finite diameter, i.e., that $Q$ is bounded. For convex problems, this assumption can be relaxed. Assume that the problem is convex, the optimal value $\min\limits_{x\in Q} f\left(x\right)$ of the true problem is finite, and for some $a > \varepsilon$ the set $S^a$ has a finite diameter $R_p^a$. (Recall that $S^a := {x \in Q : f(x) \leq \min\limits_{x\in Q} f\left(x\right) + a}$.) We refer here to the respective true and empirical problems, obtained by replacing the feasible set $Q$ by its subset $S^a$, as reduced problems. Note that the set $S^\varepsilon$ , of $\varepsilon$-optimal solutions, of the reduced and original true problems are the same. Let $N$ be an integer satisfying the inequality from Theorem~\ref{Th:ShapiroNonConvex}, e.g.
\begin{equation}
    N \geq \max \left\{ \frac{8 \lambda^2 R_p^2}{(\varepsilon-\delta)^2} \right(n\ln\left(\frac{8\rho M'_p R_p}{\varepsilon-\delta}\right) + \ln\left(\frac{2}{\beta}\right)\left),  \beta^{-1} \ln\left(\frac{2}{\beta}\right)\right\}.
\end{equation}
with $R_p$ replaced by $R_p^a$. Then, under the assumptions of Theorem~\ref{Th:ShapiroNonConvex}, we have that with probability at least $1 - \beta$ all $\delta$-optimal solutions of the reduced SAA problem are $\varepsilon$-optimal solutions of the true problem. Let us observe now that in this case the set of $\delta$-optimal solutions of the reduced SAA problem coincides with the set of $\delta$-optimal solutions of the original SAA problem. Indeed, suppose that the original SAA problem has a $\delta$-optimal solution $x^* \in Q \setminus S^a$ . Let $\bar{x} \in \text{Arg}\min_{x \in S^a} \bar{f}(x)$, such a minimizer does exist since $\bar{x} \in S^\varepsilon$ and $S^a$ is compact and $\bar{f}(x)$ is real valued convex and hence continuous. 
Then $\bar{x} \in S^\varepsilon$ and $\bar{f}(x^*) \leq \bar{f}(\bar{x}) + \delta$. By convexity of $\bar{f}(x)$ it follows that $\bar{f}(x) \leq \max\left\{ \bar{f}(\bar{x}), \bar{f}(x^*)\right\}$ for all $x$ on the segment joining $\bar{x}$ and $x^*$. This segment has a common point  $\hat{x}$ with the set $S^a \setminus S^\varepsilon$. We obtain that $\hat{x} \in S^a \setminus S^\varepsilon$ is a $\delta$-optimal solutions of the reduced SAA problem, a contradiction.

That is, with such sample size $N^*$ we are guaranteed with probability at least $1 - \beta$ that any $\delta$-optimal solution of the SAA problem is an $\varepsilon$-optimal solution of the true problem. Also, assumptions~\eqref{eq:Lip_p_stoch_in} and~\eqref{eq:MGF} should be verified for $x, x'$ in the set $S^a$ only.

Consider further $a = 2\varepsilon$ and suppose that the set $S$ of optimal solutions of the true problem is nonempty. Then it follows from the proof of Theorem~\ref{Th:ShapiroNonConvex} that it suffices to verify  assumption~\eqref{eq:MGF}  only for every $ x \in Q \setminus S^{\varepsilon'}$ and $x' := u(x)$, where $u : Q \setminus S^\varepsilon \longrightarrow S$ and $\varepsilon' := \frac{3}{4}\varepsilon + \frac{\delta}{4}$. If the set $S$ is closed, we can use, for instance, a mapping $u(x)$ assigning to each $x \in Q\setminus S^{\varepsilon'}$ a point of $S$ closest to $x$. If, moreover, the set $S$ is convex and the employed norm is strictly convex (e.g., the Euclidean norm), then such mapping (called metric projection onto $S$) is defined uniquely. Then for such $x$ and $x'$ we we can bound $\|x - x' \|_p$ in~\eqref{eq:ln_MGF_bound} by
$\|x - x' \|_p \leq \sup_{x \in Q \setminus S^{\varepsilon'}, x'\in S} \|x - x' \|_p$. The growth condition~\eqref{eq:r} implies that $S = \{x^*\}$ and that for any $x \in Q_{2\varepsilon}$ the inequality $\|x - x^*\|_p \leq \left(\frac{2\varepsilon}{\mu_{p,s}}\right)^{1/s}$ holds. 

Since the problem is convex we can use $Q_{2\varepsilon}$ instead of $Q$ to reproduce step~\eqref{eq:net}.

Reproducing the proof of Theorem~\ref{Th:ShapiroNonConvex} with the above refinements we obtain from the result of the theorem.

\end{proof}

\begin{example}{Optimality of \eqref{eq:N_ERM_r}} Consider a simple example for $p = 2$, $Q=B_2^n(1)$:
$$f(x,\xi) = \|x\|_2^s - s\langle\xi,x\rangle,$$
$\xi \sim \mathcal{N}(0,\sigma^2 I_n)$, where $I_n$ is the identity $n\times n$ matrix. Hence $f(x)=\|x\|_2^s$, $x^* = 0$, $\mu_{2,s} = 1$ in \eqref{eq:r} and $$f(y,\xi) - f(x,\xi) - \left( f(y) - f(x)\right) = s\langle \xi, y -x\rangle$$
has $\mathcal{N}\left(0,s^2\sigma^2\|y - x\|_2^2\right)$-distribution. Therefore $\lambda^2 = s^2\sigma^2$ in \eqref{eq:var}.

Note also that $$\bar{f}(x) = \|x\|_2^s - s\langle\bar{\xi}_N,x\rangle,$$
where $\bar{\xi}_N \sim \mathcal{N}(0,\sigma^2 N^{-1}I_n)$. For this problem we can  explicitly find the  minimizer of the empirical loss
$$\hat{x}^N \in \text{arg}\min\limits_{x\in Q} \bar{f}\left(x\right) = \frac{\bar{\xi}_N}{\|\bar{\xi}_N\|_2^{b}},$$
where 
$$b = 
\begin{cases} 
1, \qquad \text{if } \quad \|\bar{\xi}_N\|_2 > 1
\\
\frac{s-2}{s-1},\qquad \text{else}. 
\end{cases}$$
Since $f(x) = \|x\|_2^s$, it follows that $$f(\hat{x}^N) - f(x^*) \le \varepsilon$$  is  equivalent to $$\|\bar{\xi}_N\|_2^{\frac{s}{s-1}}\le \varepsilon$$
for sufficiently small $\varepsilon$.
Combining this with $\bar{\xi}_N \sim \mathcal{N}(0,\sigma^2 N^{-1}I_n)$ we can get that for
\begin{align*}
    \mathds{P}\left(f\left(\hat{x}^N\right) - f(x^*) \le \varepsilon\right) = \mathds{P}_{\bar{\xi}_N\sim \mathcal{N}(0,\sigma^2 N^{-1}I_n)}\left(\left\|\bar{\xi}_N\right\|_2^{\frac{s}{s-1}}\le \varepsilon\right)\ge 0.7
\end{align*}
it is required that
\begin{equation}\label{eq:low_boun_r}
    N > \frac{n\sigma^2}{\varepsilon^{2(s-1)/s}}.
\end{equation}
The lower bound \eqref{eq:low_boun_r} corresponds to \eqref{eq:N_ERM_r} when $\mu_{2,s} = 1$ and $s$ is finite.
Note that when $s=2$ and $\mu_{2,2} = \mu _2 \neq 1$  \eqref{eq:low_boun_r} can be clarified as follows
$$N \ge \frac{n\sigma^2}{\mu_2\varepsilon}.$$
The last lower bound seems to be strange enough ($n$-factor in the lower bound looks wrong) due to the upper bound from \eqref{eq:N_ERM_scv}. But there is no contradiction here even with the strengthened upper bound from \eqref{eq:N_ERM_scv}
$$N = \tilde{\cO}\left( \frac{\tilde{M}_2^2}{\mu_2\varepsilon}\right),$$ 
since $\tilde{M}_2^2 := \EE_{\xi} \left[M_2(\xi)^2\right] > n s^2\sigma^2,$\footnote{In \eqref{eq:N_ERM_scv} it is assumed that there exists such $M_2$ that $M_2(\xi)\le M_2$. Here we relax the notion of $M_2$ to $\tilde{M}_2$.}  where $M_2(\xi)$ is defined with $p=2$ according to the following:
for all $\xi$ and $x,y \in Q$:
\begin{equation}\label{eq:Lip_p_stoch}
   \left|f(y,\xi) - f(x,\xi)\right|\le M_p(\xi)\|y-x\|_p.
\end{equation}

\end{example}

\begin{theorem}\label{Th:r_online} Assume that $f(x,\xi)$ is convex in $x$ in Q, $f(x): = \EE_{\xi} f(x,\xi)$ satisfies $r$-growth condition in $Q$ and \eqref{eq:Lip_p} \dd{is satisfied}.
Then \eqref{eq:P_tilde} holds with $\tilde{x}^N\left(\left\{\xi^k\right\}_{k=1}^N\right)=\bar{x}^N\left(\{\xi^k\}_{k=1}^N\right)$ generated by the proper (\textit{restarted Mirror Descent}) modification of \eqref{eq:projSGD} and 
\begin{equation}
    N(\varepsilon,\beta) = \tilde{\cO}\left(\frac{M_p^2}{\mu_{p,s}^{2/s}\varepsilon^{2(s - 1)/s}}\right), \quad p\in[1,2].
\end{equation}
This bound is optimal up to logarithmic factor in the wide class of all reasonable ways to generate $\tilde{x}^N\left(\left\{\xi^k\right\}_{k=1}^N\right)$.
\end{theorem}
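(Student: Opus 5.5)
The plan is to prove Theorem~\ref{Th:r_online} by restarting the non-strongly-convex stochastic Mirror Descent of Theorem~\ref{Th:Agarwal}, with the restart schedule driven by the $s$-growth condition \eqref{eq:r}. I will present it for $p=2$, using the projected SGD \eqref{eq:projSGD} with averaging \eqref{eq:avg}. For $p\in[1,2]$ the same scheme runs with the proper Mirror Descent prox-structure, following \cite{juditsky2014deterministic}.

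The basic building block is Theorem~\ref{Th:Agarwal} (equivalently \eqref{eq:COR}) applied on a ball. Suppose the stage starts at $x_0$, the solution satisfies $\|x_0-x^*\|_2\le R$, the method runs on $Q\cap B_2(x_0,R)$, uses stepsize $R/(M_2\sqrt{m})$ and takes $m$ samples. Then with probability at least $1-\beta'$ the averaged point $\bar x$ satisfies
\begin{equation*}
f(\bar x)-f(x^*)\le C\,\frac{M_2R\log(1/\beta')}{\sqrt m}.
\end{equation*}

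Define the stage radii by $R_{k+1}=R_k/2$, with the target accuracies $\varepsilon_k$ determined by $\mu_{2,s}R_{k+1}^s=\varepsilon_k$. Choose $m_k\simeq M_2^2R_k^2\log^2(1/\beta')/\varepsilon_k^2$, so that stage $k$ reaches $f(x_{k+1})-f(x^*)\le\varepsilon_k$. As long as $\varepsilon_k\le 2\varepsilon$, the point $x_{k+1}$ lies in $Q_{2\varepsilon}$ (for earlier stages I assume \eqref{eq:r} holds on all of $Q$, as the statement says "in $Q$"). The growth condition then gives $\|x_{k+1}-x^*\|_2\le(\varepsilon_k/\mu_{2,s})^{1/s}=R_{k+1}$, which is exactly the invariant the next stage needs. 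Take $x^*$ to be the projection onto the solution set, and note the ball constraint stays feasible because $x^*\in B_2(x_{k+1},R_{k+1})$.

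Next the sample count. Substituting $R_k=2(\varepsilon_k/\mu_{2,s})^{1/s}$ gives
\begin{equation*}
m_k\simeq \frac{M_2^2}{\mu_{2,s}^{2/s}\varepsilon_k^{2(s-1)/s}}
\end{equation*}
up to logarithms. Since $\varepsilon_k$ decreases geometrically (ratio $2^{-s}$), the stage sizes grow geometrically whenever $s>1$. The sum is therefore dominated by the last stage, where $\varepsilon_K\simeq\varepsilon$, which gives the claimed bound with $K=O(\log(M_2R_2/\varepsilon))$ stages. When $s=1$ each stage costs the same, so the total picks up a factor of $K$; this is absorbed into the $\tilde{\cO}$.

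For confidence, set $\beta'=\beta/K$ and take a union bound over the stages; this only adds a $\log\log$ factor, as in \eqref{eq:N_ERM_scv}. For optimality I would use the example following Theorem~\ref{Th:ShapiroNonConvex2}, restricted to $n=1$: it yields the lower bound $\sigma^2/\varepsilon^{2(s-1)/s}$ for any estimator, because estimating the Gaussian mean from $N$ samples has error of order $\sigma/\sqrt N$.

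The main obstacle is the invariant step, that is, guaranteeing that each stage's output is close enough to $x^*$ for the next shrunken ball to contain it. The difficulty is that \eqref{eq:r} is stated only on $Q_{2\varepsilon}$. If growth genuinely holds only there, I would run the first stages as plain Theorem~\ref{Th:Agarwal} with radius $R_2$ until accuracy $2\varepsilon$ is reached. That costs $M_2^2R_2^2/\varepsilon^2$ samples, which may exceed the target bound, so in that case I would fall back on convexity: the $s$-growth inequality on the sublevel set $Q_{2\varepsilon}$ extends outward, with $f(x)-f(x^*)\ge c\,\varepsilon^{1-1/s}\mu_{2,s}^{1/s}\|x-x^*\|_2$ outside $Q_{2\varepsilon}$ by convexity along segments. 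This supplies a weaker but sufficient growth for the early stages.
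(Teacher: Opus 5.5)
Your upper-bound argument is essentially the paper's proof. You restart the averaged SGD behind \eqref{eq:COR}, and use the growth condition to turn each stage's function gap into a distance bound, so the next stage starts closer to the solution set. The stage lengths grow geometrically, so the total is dominated by the last stage (where the gap is $\simeq\varepsilon$), and the union bound over the $\kappa$ stages costs only a $\log(\kappa/\beta)$ factor. The differences are minor and, if anything, improvements:
\begin{itemize}
\item you halve $R_k$ rather than $R_k^s$;
\item you localize each stage to $Q\cap B_2(x_0,R_k)$, which is the cleanest way to justify using $R_k$ in both the stepsize and the high-probability bound;
\item you treat $s=1$ explicitly. There the paper's sum $\sum_l 2^{2(s-1)l/s}$ is not dominated by its last term and contributes a factor $\kappa$, which is harmless inside $\tilde{\cO}$.
\end{itemize}
Your outward extension of growth, $f(x)-f(x^*)\ge(2\varepsilon)^{1-1/s}\mu_{2,s}^{1/s}\|x-x^*\|_2$ outside $Q_{2\varepsilon}$, is correct but unnecessary, since the theorem assumes growth on all of $Q$.

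The genuine gap is in the optimality part, which the paper's proof does not address at all. The example after Theorem~\ref{Th:ShapiroNonConvex2} fixes $\xi\sim\mathcal{N}(0,\sigma^2 I_n)$, so $f(x)=\|x\|_2^s$ and $x^*=0$ are known in advance. Moreover, \eqref{eq:low_boun_r} is computed only for the ERM output $\hat{x}^N$. It therefore says nothing about \emph{any estimator}: the data-independent rule $\tilde{x}\equiv 0$ is exact for that problem.

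Your remark about estimating a Gaussian mean points to the right fix, but it needs a family of problems with an unknown parameter. For instance, let $\xi$ have mean $\theta\in\{0,\theta_1\}$ and take $f_\theta(x)=|x|^s-s\theta x$. Choose $\theta_1\simeq\varepsilon^{(s-1)/s}$ so that no point is $\varepsilon$-optimal for both problems. Le Cam's two-point bound then gives $N\gtrsim\sigma^2/\theta_1^2\simeq\sigma^2/\varepsilon^{2(s-1)/s}$.

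To stay inside the theorem's class you need two further things:
\begin{itemize}
\item bounded noise, since Gaussian $\xi$ violates \eqref{eq:Lip_p} (which is why the paper's example switches to $\tilde{M}_2$);
\item an $s$-growth constant that is uniform over the family.
\end{itemize}
For this shifted family, uniformity follows from the uniform convexity of $|x|^s$ when $s\ge 2$. It fails for $1<s<2$: there $f_\theta$ grows only quadratically near $x^*_\theta\neq 0$, which is slower than $|x-x^*_\theta|^s$. That range needs a different construction.
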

\begin{proof}
{For clarity we consider only the  euclidean case $p=2$. Since $f(x): = \EE_{\xi} f(x,\xi)$ satisfies $s$-growth condition in $Q$, it follows from \eqref{eq:COR} that with probability at least $1 - \beta/\kappa$
\begin{equation*}
   \mu_{2,s}\|\bar{x}^N - x^*\|_2^s \le f(\bar{x}^N) - f(x^*) = \cO\left(\frac{M_2\|x^1 - x^*\|_2 \sqrt{\log\left(\kappa/\beta \right)}}{\sqrt{N}}\right),
\end{equation*}
where $\bar{x}^N$ is calculated according to \eqref{eq:avg} based on \eqref{eq:projSGD}.
If we choose $$N = \cO\left(\frac{M_2^2\log\left(\kappa/\beta\right)}{ \mu_{2,s}^2 \|x^1 - x^*\|_2^{2(s-1)}}\right),$$
then
$$\|\bar{x}^N - x^*\|_2^s = \frac{1}{2}\|x^1 - x^*\|_2^s.$$
The idea of the \textit{restart technique} is to put
$$x^1:= \bar{x}^N $$
and to restart algorithm \eqref{eq:projSGD}.
By denoting $R_{2,l}$ the distance between the starting point and the solution $x^*$ at $l$-th restart, we could guarantee that $R_{2,{l+1}}^s = R_{2,1}^s 2^{-l}$. Similarly, $N_l$ is a number of iteration at $l$-th restart. Since we would like to solve the problem with probability at least $1 - \beta$ and with accuracy $\varepsilon$, the number of the restarts $\kappa$ is determined from
    $$\frac{M_2 R_{2,\kappa+1}\sqrt{\log\left(\kappa/\beta \right)}}{\sqrt{N_{\kappa+1}}}\simeq \mu_{2,r}R_{2,\kappa+1}^s = \mu_{2,r}R_{2,1}^s 2^{-(\kappa+1)}\simeq \varepsilon.$$
Therefore the total number of samples (iterations) is
\begin{align*}
\sum_{l=1}^{\kappa} \cO\left(\frac{M_2^2\log\left(\kappa/\beta\right)}{ \mu_{2,s}^2 R_{2,l}^{2(s-1)}}\right) 
&= \frac{M_2^2\log\left(\kappa/\beta\right)}{ \mu_{2,s}^2 R_{2,1}^{2(s-1)}} \sum_{l=1}^{\kappa}\cO\left(2^{\frac{2(s-1)}{s} l}\right) \\
&= \cO\left(\frac{M_2^2\log\left(\kappa/\beta\right)}{\mu_{2,s}^2 R_{2,1}^{2(s-1)}} 2^{\frac{2(s-1)}{r}(\kappa+1)}\right) \\
&=
\cO\left(\frac{M_2^2\log\left(\kappa/\beta\right)}{\mu_{2,s}^2 R_{2,1}^{2(s-1)}} \left(\frac{\mu_{2,s}R_{2,1}^s}{\varepsilon}\right)^{\frac{2(s-1)}{r}}\right)\\
&= \cO\left(\frac{M_2^2\log\left(\kappa/\beta\right)}{\mu_{2,s}^{2/s}\varepsilon^{2(s - 1)/s}}\right).
\end{align*}}
\end{proof}

\section{Concluding remarks}
For a better structure of this survey, we have collected various comments that clarify the results given above. In more detail most of these comments can be found in the references given below.

\subsection{Weakening of uniform Lipschitz condition in online approach}
An important remark concerns online approach is that we can significantly relax uniform Lipschitz continuity property \eqref{eq:Lip_p}, assuming that $M_p(\xi)$ in \eqref{eq:Lip_p_stoch} has a finite second moment $\EE_{\xi}\left[M_p(\xi)^2\right] < \infty$. In this case, all the bound remain the same up to a logarithmic factor, see  \cite{nazin2019algorithms,gorbunov2021near} for $p=2$, and \cite{nemirovski2009robust,juditsky2014deterministic} for $p\ge 1$, but for the convergence in expectation. If we have only $\EE_{\xi}\left[M_p(\xi)^{1+ \alpha}\right] < \infty$, where $\alpha > 0$ then in the dimension-free case ($N\lesssim n$) 
$N \sim \varepsilon^{-\max\left\{2,p\right\}}$ will get worse $N \sim \varepsilon^{-(1+\alpha_p)/\alpha_p}$, where $\alpha_p = \min\left\{1,\alpha,(p-1)^{-1}\right\}$ \cite{nemirovski1983problem,vural2022mirror,zhang2022parameter}. 
Similarly, in the strongly convex case ($s$-growth condition) and in the case $N \gtrsim n$. Note that under mild assumptions about the symmetry of noise and $p=2$ one can obtain $\alpha_p = 1$ \cite{pmlr-v238-puchkin24a}.

For the offline approach some particular results in this direction are also known, see the references in \cite{shapiro2021lectures}.

\subsection{Weakening of the convexity condition}
The principal difference between online and offline approach is that for optimal results in offline approach the convexity of $f(x,\xi)$ in $x$ for all $\xi$ is typically required. It was shown in \cite{sekhari2021sgd} that for any regularizer there is a stochastic optimization problem with convex $F(x)$ such that regularized empirical loss minimization approach fails to learn. But for online approach the convexity of $F(x)$ is enough for the same rates of convergences in terms of convergence in expectation  \cite{nemirovski2009robust,juditsky2014deterministic}. 

In Section~\ref{Sec:NonConv_Conv} we have observed that offline approach in non-convex case required $N \propto n\varepsilon^{-2}$ samples despite the fact that online approach in non-convex case required $N \propto \varepsilon^{-(n+1)}$ samples. Moreover, under different additional assumptions \cite{shalev2014understanding,rakhlin2014statistical,bach2021learning} (finite VC-dimension e.t.c.) the dependence of $n$ in offline approach $N \propto n\varepsilon^{-2}$ can be relaxed.\footnote{Factor $n$ is replaced by the <<efficient>> dimension, which  could be much smaller.} So it seems that offline approach is much better than online. In terms of the sample complexity (number of different samples of $\xi$) it really is. But at the end in offline approach we should solve empirical loss (risk) minimization problem that would be non-convex. To solve this problem we required $N \propto n\varepsilon^{-(n+1)}$ stochastic gradient oracle calls\footnote{This bound can be improved a little bit by using the fact that all the terms in the sum (the empirical loss) have the same distribution. But this improvement will have a minor effect on the total oracle complexity.} that corresponds (up to a factor $n$) to online approach. 

Some results that were mentioned in the previous sections can be generalized if we replace (strong) convexity assumption by quasi-convexity or some growth condition \cite{necoara2019linear} or Polyak--Lojasiewicz(--Lezansky) condition \cite{karimi2016linear,belkin_2021}. For example, online and offline approaches under Polyak--Lojasiewicz condition are considered in
\cite{ajalloeian2020convergence} and \cite{li2021improved}. 

\subsection{How to make the online approach adaptive?}
To answer this question, we 
\dm{appeal}
to SGD \eqref{eq:projSGD}
\begin{equation*}
x^{k+1}= \pi_Q\left(x^k - \gamma_k\nabla_x f(x^k,\xi^k)\right),
\end{equation*}
with $$\gamma_k\equiv \frac{R_2}{M_2\sqrt{N}}.$$ The problem is that the stepsize policy requires the knowledge of parameters $R_2$ and $M_2$.  Moreover, this stepsize policy in not adaptive in $N$, i.e., we should know the desired $N$ in advance. The last problem was solved in \cite{nemirovski2009robust} by changing $$\gamma_k\equiv \frac{R_2}{M_2\sqrt{k}}.$$ This stepsize policy leads to the same convergence rate up to a logarithmic factor. This factor can be eliminated by the Nesterov's dual extrapolations scheme \cite{nesterov2009primal}. The problem of unknown $M_2$-parameter was further solved in \cite{duchi2011adaptive,duchi2018introductory}, where it was proved that for 
$$\gamma_k = \frac{R_2}{\sqrt{\sum_{j=1}^k\|\nabla_x f(x^j,\xi^j)\|_2^2}}$$
the  convergence rate does not change up to a numerical constant factor. SGD with this stepsize policy is known as AdaGrad. The works \cite{nemirovski2009robust,duchi2011adaptive} largely determined the development of modern stochastic optimization. For example, one of the most cited stochastic optimization algorithm after SGD is Adam \cite{kingma2014adam,reddi2019convergence}, which is based on AdaGrad. This algorithm and its variations are one of the main tools to train Deep Neural Networks \cite{lecun2015deep,stevens2020deep}. 

Although in practice different adaptive algorithms show themselves well in the theory typically they converge in the worst case not better than non-adaptive analogues \cite{bach2019universal,ene2021adaptive}.

One of the drawback of AdaGrad stepsize rule $\gamma_k$ is decreasing of $\gamma_k$ rather than tracking the smoothness of target function. This problem was resolved in \cite{gasnikov2018universal,rodomanov2024universalgradientmethodsstochastic}.

Note that since $\gamma_k$ depends on $R_2$ AdaGrad type algorithms are not fully adaptive. The adaptive estimation of $R_2$ was proposed in DoG algorithm \cite{pmlr-v202-ivgi23a} and further developed for accelerated methods
\cite{pmlr-v247-kreisler24a}.

Normalized-adaptive type stepsizes could at the same time solve the typical LLM training problem of unbounded gradients, see   \cite{kovalev2026stochastic}, where it was explained theoretically why AdamW is so good in practice, e.g. significantly outperforms SGD. 

\subsection{Overparametrization}\label{Sec:overparam}
In practice for the strongly convex problems ($f(x)$ is $\mu$-strongly convex in \dm{the} $2$-norm):
\begin{equation*}
    \min\limits_{x\in \R^n} \left[f(x): = \EE_{\xi} f(x,\xi)\right]
\end{equation*}
 with uniformly Lipschitz continuous gradient: for all $\xi$ and $x,y\in\R^n$:\footnote{As we will see in the next chapters it sufficiently to consider Lipschitz-type conditions only in some balls centered at starting point and radius determined (up to a logarithmic factor) by the distance between starting point and the closest to this point solution.}
 \begin{equation}\label{eq:over_L}
\|\nabla_x f(y,\xi) - \nabla_x f(x,\xi)\|_2 \le L\|y-x\|_2
 \end{equation}
it was observed that simple stochastic gradient method (SGD):
$$x^{k+1} = x^k - \gamma \nabla_x f(x^k,\xi^k)$$
converges with linear rate in the vicinity of the solution $x^*$ \cite{moulines2011non}. That was also confirmed in the theory
\begin{equation}\label{eq:over_converg}
\EE_{\left\{\xi^k\right\}_{k=1}^N}\left[\|x^{N+1} - x^*\|_2^2\right] \le \|x^1 - x^*\|_2^2 \left(1 - \gamma\mu\right)^N + \frac{2\gamma\sigma_*^2}{\mu},
\end{equation}
where the stepsize $\gamma \le 1/(2L)$ and
$$\sigma_*^2 =\EE_{\xi}\left[\|\nabla_x f(x^*,\xi) - \nabla f(x^*)\|_2^2\right]= \EE_{\xi}\left[\|\nabla_x f(x^*,\xi)\|_2^2\right],$$
since $\nabla f(x^*) = 0$.

Indeed, from Section~\ref{sec:ML}:
\begin{align*}
\|x^{k+1} - x^*\|_2^2 \le \|x^{k} - x^*\|_2^2 - 2\gamma \langle \nabla_x f(x^k,\xi^k), x^k - x^*\rangle + \gamma^2\|\nabla_x f(x^k,\xi^k)\|_2^2.
\end{align*}
Taking the conditional expectation \dm{in} $\xi^k$ under fixed $x^k$ and using that (see inequality corresponds to smoothness in \cite{nesterov2018lectures})
\begin{align*}
\EE_{\xi}\left[\|\nabla_x f(x,\xi)\|_2^2\right] 
&\le 2\EE_{\xi}\left[\|\nabla_x f(x,\xi) - \nabla_x f(x^*,\xi) \|_2^2\right] + 2\EE_{\xi}\left[\|\nabla_x f(x^*,\xi)\|_2^2\right] \\
&\le 4L\EE_{\xi}\left[f(x,\xi) - f(x^*,\xi) - \langle \nabla_x f(x^*,\xi), x - x^* \rangle\right] + 2\sigma_*^2  \\
&= 4L\left(f(x) - f(x^*)\right) + 2\sigma_*^2,
\end{align*}
we obtain\footnote{The last inequality uses the weaker variant of $\mu$-strong convexity assumption of $f(x)$: for all $x \in \R^n$
$$f(x^*) \ge f(x) + \langle \nabla f(x), x^* - x \rangle + \frac{\mu}{2}\|x^* - x\|_2^2.$$} 
\begin{align*}
\EE_{\xi^k}\left[\|x^{k+1} - x^*\|_2^2 | x^k\right] 
&\le \|x^{k} - x^*\|_2^2 - 2\gamma \langle \nabla f(x^k), x^k - x^*\rangle \\
& \quad+ \gamma^2\left(4L\left(f(x^k) - f(x^*)\right) + 2\sigma_*^2\right) \\
&\le \|x^{k} - x^*\|_2^2 
- 2\gamma \left( f(x^k) - f(x^*) + \frac{\mu}{2}\|x^k - x^*\|_2^2\right)\\
& \quad + 4Lh^2\left(f(x^k) - f(x^*)\right) + 2\gamma^2\sigma_*^2.
\end{align*}
Rearranging the terms in the RHS and taking 
the expectation in $x^k$ we come to the following:
\begin{align*}
\EE_{\left\{\xi^j\right\}_{j=1}^k}\left[\|x^{k+1} - x^*\|_2^2\right]
&\le \left(1 - \gamma\mu\right)\EE_{\left\{\xi^j\right\}_{j=1}^{k-1}}\left[\|x^{k} - x^*\|_2^2 \right] \\
& \quad + 2\gamma\left(1 - 2L\gamma\right)\left(\EE_{\left\{\xi^j\right\}_{j=1}^{k-1}}\left[f(x^k)\right] - f(x^*)\right) + 2\gamma^2\sigma_*^2  \\
&\le \left(1 - \gamma\mu\right)\EE_{\left\{\xi^j\right\}_{j=1}^{k-1}}\left[\|x^{k} - x^*\|_2^2 \right] + 2\gamma^2\sigma_*^2, 
\end{align*}
if $\gamma\le 1/(2L)$. So we come to \eqref{eq:over_converg} by induction.

The overparametrization effect appears if $\sigma_*^2$ is small, that is $\nabla_x f(x^*,\xi) \simeq 0$ for almost all $\xi$. 

For example if we consider offline approach 
\begin{equation*}
    \min\limits_{x\in \R^n} \left[\bar{f}(x): = \frac{1}{N} \sum_{k=1}^N f(x,\xi^k)\right]
\end{equation*}
and reformulate this problems as
\begin{equation}\label{eq:sum_rand}
    \min\limits_{x\in \R^n} \left[\bar{f}(x): = \EE_{k} f(x,\xi^k),\right]
\end{equation}
where $\mathds{P}\left(k = l\right) = 1/N$ for $l=1,...,N$. In this case $L = \max_{l=1,...,N} L_l$, where $L_l$ is Lipschitz gradient constant of $f(x,\xi^l)$ in $x$. The variance is 
$$\sigma_*^2 = \frac{1}{N} \sum_{k=1}^N \|\nabla_x f(x^*,\xi^k)\|_2^2.$$ If $\nabla_x f(x^*,\xi^k)\simeq 0$, which could be possible due to the same stochastic nature of all the terms $f(x,\xi^k)$, then for all $k = 1,...,N$ we have overparametrization and effect of linear convergence of SGD to a small vicinity of the solution. Another explanation appears when $n\ge N$ (reflect the title <<overparametrization>>) with e.g. $f\left(x,\xi^k\right) = \left(a\left(\xi^k\right) - F\left(x,b\left(\xi^k\right)\right) \right)^2$ and system $$a\left(\xi^k\right) - F\left(x,b\left(\xi^k\right)\right) = 0,\quad k=1,...,N$$ is solvable (due to $n = \text{dim}\, x \ge N$).

Although overparameterized problems have attracted considerable attention in recent years, the results available here are still far  from theory we have described in the previous sections. For example, in offline approach with $\sigma_*^2 \simeq 0$ we have only \cite{li2021improved}:
$$\EE_{\left\{\xi^k\right\}_{k=1}^N}\left[\|\hat{x}^{N} - x^*\|_2^2\right] \propto \frac{1}{\mu^2N^2},$$
rather than we have in online approach with proper stepsize policy $\gamma = 1/(2L)$:
$$\EE_{\left\{\xi^k\right\}_{k=1}^N}\left[\|x^{N+1} - x^*\|_2^2\right] \propto \left(1 - \frac{\mu}{2L}\right)^N.$$

Overparametrizaton setup can also be developed for accelerated methods, see \cite{woodworth2021even,ilandarideva2025accelerated} for details. Note that in \cite{ilandarideva2025accelerated} overparameterization is presented also in a non-\dm{E}uclidean proximal setup.

\subsection{Acceleration and batching for smooth convex optimization problems in online approach}\label{Sec:Acc_Batch}
Consider smooth convex optimization problem
\begin{equation}\label{eq:F}
    \min\limits_{x\in Q} f(x),
\end{equation}
where for all $x,y\in Q$:
\begin{equation}\label{eq:LipF}
\|\nabla f(x) - \nabla f(y)\|_2 \le L\|y-x\|_2.
\end{equation}
Accelerated methods  \cite{nesterov2018lectures,lan2020first,lin2020accelerated} (see also \cite{borodich2025on} where the most simple accelerated scheme was proposed) allow to solve smooth convex optimization problems with the rate
$$f(x^N) - f(x^*)\lesssim \frac{LR^2}{N^2},$$
where $R^2 = \|x^1 - x^*\|_2^2$ and $x^*$ is the closest solution (in $2$-norm) to $x^1$ if the set of the solutions contains more than one point. Below we describe how to build accelerated batch-parallelized algorithm that significantly outperform SGD in the number of subsequent iterations.

First of all, following \cite{devolder2013exactness,dvinskikh2020accelerated,Dvinskikh2021} we introduce the notion of $(\delta_1,\delta_2,L)$-oracle. We say that for the problem \eqref{eq:F} we have an access to $(\delta_1,\delta_2,L)$-oracle at a point $x$ if we can evaluate a vector $\nabla_{\delta} f(x)$ such that, for all $x,y \in Q$,
\begin{equation}\label{eq:delta}
    - \delta_1(x,y)  \le f(y) - f(x) - \langle\nabla_{\delta} f(x), y-x \rangle  \le  \frac{L}{2}\|y-x\|_2^2 + \delta_2(x,y),
\end{equation}
where $\mathbb{E}\left[\delta_1(x,y)|x\right] = 0$, 
$\mathbb{E}\delta_2(x,y) \le \delta$. 
Note that the left inequality corresponds to the definition of $\delta_1$-(sub)gradient \cite{polyak1987introduction} and reduces to the convexity property in the case $\delta_1 = 0$. In this case the LHS holds with $\nabla_{\delta} f(x) = \nabla f(x)$. The right inequality in the case when $\delta_2 = 0$ is a consequence\footnote{Note, that the right inequality in the case when $\delta_2 = 0$ is not equivalent to \eqref{eq:LipF}, but is typically sufficient to obtain optimal (up to constant factors) bounds on the rate of convergence  of different methods \cite{taylor2017smooth}.} of \eqref{eq:LipF}. Let us consider an algorithm \textbf{A}($L,\delta_1,\delta_2$) 
that converges with the rate\footnote{$N$ is a number of iterations which up to a constant factor is equal to the number of $(\delta_1,\delta_2,L)$-oracle calls. We can consider more specific rates of convergence for problems with additional structure and develop \textit{batching technique} in a similar way.}
\begin{equation}\label{RC}
    \EE f(x^N) - f(x^*) = \cO\left( \frac{LR^2}{N^{\alpha}} + N^{\zeta}\delta \right).
\end{equation}
The \textit{batching technique}, applied to the problem \eqref{eq:F} with $L$-Lipschitz gradient (in 2-norm), is based on the use of the mini-batch stochastic approximation of the gradient
\[
\nabla_{\delta} f(x) = \frac{1}{r}\sum_{j=1}^r \nabla_x f(x,\xi^j)\\
\]
in \textbf{A}($L,\delta_1,\delta_2$), where $\{\xi^j\}_{j=1}^r$ are sampled independently and  $r$ is an appropriate batch size. Typically, for many algorithms on $k$-th iteration we have to put $x=x_*$, $y=x^k$ in LHS \eqref{eq:delta} and 
$\delta_1^k = \langle \nabla_{\delta} f\left(x^k\right) - \nabla f\left(x^k\right),x^{k}-x_*\rangle$. Since $\{\xi^j\}_{j=1}^r$ in the definition of $\nabla f_{\delta}(x^k)$ does not depend on $x^k$, we have $\EE\left[\delta_1^k|x^k\right]=0$. At the same time on $k$-th iteration one also have to put $x=x^{k}$, $y=x^{k+1}$ in RHS \eqref{eq:delta}. The choice of $r$ is based on the following relation (used in RHS of \eqref{eq:delta} on $k$-th iteration) 
\[\langle \nabla_{\delta} f\left(x^{k}\right) - \nabla f\left(x^{k}\right),x^{k+1}-x^k\rangle \le \frac{1}{2L}\|\nabla_{\delta} f\left(x^{k}\right) - \nabla f\left(x^{k}\right)\|_2^2 + \frac{L}{2}\|x^{k+1}-x^k\|_2^2,
\]
where $\delta_2^k=\frac{1}{2L}\|\nabla_{\delta} f\left(x^{k}\right) - \nabla f\left(x^{k}\right)\|_2^2$. We have to use another approach in this case since $x^{k+1}$ depends on $\{\xi^j\}_{j=1}^r$ in the definition of $\nabla f_{\delta}(x^k)$. Note that
\[
\EE_{\{\xi^j\}_{j=1}^r}\left[ \| \nabla_{\delta} f\left(x^{k}\right) - \nabla f\left(x^{k}\right)\|_2^2 \right] \le \frac{\sigma^2}{r}, \]
where $\sigma^2$ is the variance of unbiased stochastic gradient $\nabla_x f(x,\xi)$, which is available. Hence, if 
\[\delta \le \frac{1}{2L}
\max_{x\in Q}
\EE_{\{\xi^j\}_{j=1}^r}\left[ \| \nabla_{\delta} f(x) - \nabla f(x)\|_2^2 \right],\]
i.e. $\delta = \sigma^2/(2Lr)$, we have that  \textbf{A}($2L,\delta_1,\delta_2$) 
converges with the rate given in \eqref{RC}.
From \eqref{RC} we see that to obtain
 \[\EE f(x^N) - f(x^*) \le \varepsilon \]
it suffices to take
\[N = \cO\left(\left(\frac{LR^2}{\varepsilon}\right)^{1/\alpha}\right) \quad \text{and}
\quad
r = \cO\left(\frac{\sigma^2N^{\zeta}}{L\varepsilon}\right). 
\]
In particular, for Gradient Descent we have that  $\alpha = 1$, $\zeta = 0$ and for all known Accelerated gradient methods -- $\alpha = 2$, $\zeta = 1$ \cite{devolder2013exactness,dvinskikh2020accelerated}. In accelerated case, we obtain the complexity bounds for batched Accelerated gradient methods (assume that $\sigma^2$ is such that $T \ge N$, otherwise we put $T: = N$):
\begin{center}
$N=\cO\left(\sqrt{LR^2/\varepsilon}\right)$, $r = \cO\left(\sigma^2R/\left(\sqrt{L}\varepsilon^{3/2}\right)\right)$, $T = N\cdot r = \cO\left(\sigma^2R^2/\varepsilon^2\right)$.
\end{center}

It is obvious that we can calculate batch in a parallel manner. This reduces the number of subsequent iterations from $N\propto \varepsilon^{-2}$ for standard SGD with small stepsize (see Section~\ref{sec:ML}) and $N\propto \varepsilon^{-1}$ for SGD with special stepsize policy $\gamma\simeq\min\left\{1/L,1/(\mu N)\right\}$ \cite{stich2019unified} (see Section~\ref{Sec:overparam}) to the optimal rate $N\propto \varepsilon^{-1/2}$ \cite{nemirovski1983problem,woodworth2018graph}. This result was generalized in \cite{woodworth2021even} to overparametrized problems, see Section~\ref{Sec:overparam}. 

The described above batching technique is very important and universal technique, which allows to build (optimal) stochastic algorithms based on the (optimal) deterministic algorithms and their analysis of convergence with inexact oracle. We mention here only the two most recent examples. In \cite{gasnikov2022power} batching technique was successfully applied in gradient-free optimization. In \cite{metelev2022decentralized} batching technique was successfully applied for distributed strongly convex-concave saddle-point problems with different constants of strong convexity and strong concavity. 

Note that the described technique can be further generalized to strongly convex problems (problems with $s$-growth condition) and non-\dm{E}uclidean proximal setup \cite{dvurechensky2016stochastic,gorbunov2020stochastic}.

\subsection{Sum-type problems and offline approach}\label{Sec:VR_short}
At the very end \dm{of the} offline approach we should solve the empirical loss (risk) minimization problem
\begin{equation}\label{eq:ERM_SC}
  \min\limits_{x\in Q}  \left[\bar{f}(x):=\frac{1}{N}\sum_{k=1}^N f(x,\xi^k)\right].
\end{equation}
For clarity, we assume that $f(x,\xi)$ is $\mu$-strongly convex and $M$-Lipscitz continuous in $x$ in $2$-norm, see \eqref{eq:Scv}, \eqref{eq:Lip_p}. According to Theorem~\ref{Th:StrCnv} $N = \tilde{\cO}\left(M^2/(\mu\varepsilon)\right)$ and we should solve \eqref{eq:ERM_SC} with the accuracy $\delta \simeq \varepsilon^2\mu/M^2$. Unfortunately, without additional assumptions on the smoothness of $f(x,\xi)$ the complexity of this problem (the number of $\nabla_x f(x,\xi^k)$ calculations) is $\tilde{\cO}\left(M^2/(\mu\delta)\right)$ \cite{nemirovski1983problem}. That is much worse than $N$. If we additionally assume that $f(x,\xi)$ has $L$-Lipschitz continuous gradient in $x$ in the $2$-norm, see \eqref{eq:over_L}, then we can apply batch-parallelization and acceleration in the number of subsequent iterations, see Section~\ref{Sec:Acc_Batch}. But this trick does not solve the problem of oracle complexity. We still 
\dm{require} $\tilde{\cO}\left(M^2/(\mu\delta)\right)$ calculations of $\nabla_x f(x,\xi^k)$. It seems that we come to some contradiction. Offline approach seems to be worse everytime than online one in terms of the oracle complexity. Fortunately, this is not the case. There exist randomized Variance Reduced (VR) algorithms (see, e.g. \cite{woodworth2016tight,allen2016optimal,lan2020first,lin2020accelerated,kovalev2020dont}) that allow to solve \eqref{eq:ERM_SC} (with accuracy $\delta$) with the complexity:\footnote{This bound is optimal \cite{woodworth2016tight,lan2020first}, i.e. there are no algorithms that work only with $\nabla_x f(x,\xi^k)$ and has a better complexity.}
\begin{equation}\label{eq:VR_rate}
\tilde{\cO}\left(\left(N + \sqrt{N\frac{L}{\mu}}\right)\log\left(\frac{\Delta f}{\delta}\right)\right).
\end{equation}
Under the natural assumption $L/\mu \lesssim N \simeq M^2/(\mu \varepsilon)$, i.e.\footnote{One can always achieve this condition by smoothing a non-smooth problem. In this case $L\simeq M^2/\varepsilon$ \cite{nesterov2015universal,woodworth2016tight}.} $L\lesssim M^2/\varepsilon$ this complexity coincides with $N$ up to a logarithmic factor. Constant $L$ in \eqref{eq:VR_rate} can be improved by the average one via importance/arbitrary sampling trick \cite{JMLR:v22:20-156}. Constant $\mu$ simultaneously can be also replaced from the worth-term case to the average one due to the proper regularization and minus regularization of terms. 

Moreover, for many concrete problems (e.g. Soft-SVM, see Section~\ref{Sec:StatisticsMotiv}) we can efficiently reduce originally non-smooth problems to smooth one \cite{allen2016optimal} and apply the VR algorithms.

The modern theory of VR methods is well developed, see e.g. \cite{lan2020first}. For example, it include\dm{s} non-\dm{E}uclidean proximal setup. 

In the core of VR methods lies a very simple idea, which goes back to Monte Carlo theory. Instead of stochastic gradient $\nabla_x f(x,\xi)$ it is proposed to consider the reduced one
$$\tilde{\nabla}_x f(x,\xi) = \nabla_x f(x,\xi) - \nabla_x f(\hat{x}^N,\xi),$$
where $\hat{x}^N$ is the solution of \eqref{eq:ERM_SC}. Note that with stochastic gradient we have overparametization effect $\tilde{\nabla}_x f(\hat{x}^N,\xi) = 0$ (for all $\xi$) and therefore we can expect a linear convergence. Unfortunately in this form VR trick is not practical, since it is required to know $\hat{x}^N$. The proper correction of the trick consist in replacing  $\nabla_x f(x^k,\xi^{t(k)})$ (where $t(k)$ is an index that equally likely and independently selected among $1,...,N$ at $k$-th iteration) with  
$$\tilde{\nabla}_x f(x^k,\xi^{t(k)}):= \nabla_x f(x^k,\xi^{t(k)}) - \nabla_x f(\bar{x}^k,\xi^{t(k)}) + \nabla \bar{f}(\bar{x}^k),$$
where $\bar{x}^k$ periodically updated as $\bar{x}^k := x^k$ according to the different policies \cite{lan2020first,kovalev2020dont}. With this stochastic gradient we may also expect overparametrization along with the convergence $x^k \to \hat{x}^N$. Indeed,
\begin{equation}\label{eq:VR_var}
\EE_{\xi^{t(k)}}\left[\|\tilde{\nabla}_x f(x^k,\xi^{t(k)})\|_2^2\right]\lesssim L\left(\bar{f}(\bar{x}^k) -   \bar{f}(\hat{x}^N) \right)\to 0
\end{equation}
along with $\bar{x}^k \to \hat{x}^N$.

\subsection{Composite optimization}
From the previous sections we have known that regularizers in the empirical loss (risk) minimization approach play an important role. Sometimes this regularizers spoil the properties of the problem, e.g. $\|x\|_1$-regularizer in LASSO makes the problem non-smooth, see Section~\ref{Sec:StatisticsMotiv}. We can solve this issue by using composite optimization approach.
Let us remind that standard SGD  \eqref{eq:projSGD} has a following structure:
\begin{align*}
x^{k+1}&= \pi_Q\left(x^k - \gamma_k\nabla_x f(x^k,\xi^k)\right) \\
&=\arg\min\limits_{x\in Q}\left\{\langle \nabla_x f(x^k,\xi^k), x - x^k \rangle +\frac{1}{2\gamma_k}\|x-x^k\|_2^2\right\}.
\end{align*}
If the stochastic optimization problem is regularized (i.e. has composite term):
\begin{equation*}\label{eq:SOP_}
    \min\limits_{x\in Q } \left[ \EE_{\xi} \left[f(x,\xi)\right] + h(x)\right].
\end{equation*}
we could correct the described procedure as follows \cite{xiao2009dual}:
\begin{align*}
x^{k+1} = \arg\min\limits_{x\in Q}\left\{\langle \nabla_x f(x^k,\xi^k), x - x^k \rangle  +\frac{1}{2\gamma_k}\|x-x^k\|_2^2 + h(x) \right\}.
\end{align*}
The iteration complexity does not change. But the auxiliary (projection) problem becomes more difficult. Fortunately, for some concrete examples (e.g. LASSO) the auxiliary problem almost retains its complexity. In this case composite term is called <<proximal-friendly>>. The same holds true for Accelerated batched algorithms \cite{dubois2022fast} and VR algorithms \cite{lan2020first}.

In the case of non proximal-friendly composite terms it happens that we can split the oracle complexities for two terms \cite{lan2020first,kovalev2022sliding}, see section \ref{Sec:distr_1}. This turned out to be an extremely useful option in distributed optimization \cite{lan2020first,gorbunov2020recent,sadiev2021decentralized,kovalev2022sliding}.

Composite optimization was firstly developed in deterministic setup \cite{beck2009fast,duchi2010composite,nesterov2013gradient}. Moreover, in \cite{nemirovski1985optimal,stonyakin2021inexact} it was considered more general <<model setup>> with $f(x):=\min\left\{f_1(x),...,f_m(x)\right\}$ and composite optimization as particular cases. Under some assumptions this model setup can be further developed on stochastic optimization problems \cite{dvinskikh2020accelerated}.

\subsection{Overfitting and early stopping for offline approach}
Let us return to the empirical problem \eqref{eq:ERM_SC}.
In Section~\ref{Sec:SCR} we describe regularization trick, that allows to align sample complexities for offline and online approaches for convex, but non-strongly convex problems. Another (a little artificial) way to align sample complexities in both of the approaches is to change the way of obtaining $\bar{x}_{\delta,\beta/2}^N\left(\{\xi^k\}_{k=1}^N\right)$, which is based on sufficiently accurate solution of \eqref{eq:ERM_SC} (or its regularized version). The idea is trivial: to <<solve>> \eqref{eq:ERM_SC} by using SGD with samples $\{\xi^k\}_{k=1}^N$ without repeating. So the first $N$ iteration\dm{s} of this algorithm is completely coincide with standard SGD iterations \eqref{eq:projSGD}.
An interesting phenomen\dm{on} is that further iterations of SGD based on the same sample set $\{\xi^k\}_{k=1}^N$ not only \dm{not} improve the quality of the solution (this quality is measured in terms of initial stochastic optimization problem!), but can also provably lead to a decrease in quality (overfitting). 

This idea was further developed in seminal work \cite{hardt2016train}, where it was shown that for the standard SGD (with output $\bar{x}^T$ after $T$ iterations, see \eqref{eq:avg}) applied to smooth convex (but not strongly convex!) empirical problem (without any regularization!) in the expectation form \eqref{eq:sum_rand}:
\begin{center}
   $f(\bar{x}^T) - f(x^*) \propto N^{-1/2}$ if $T\propto N$. 
\end{center}
 This phenomenon sometimes called <<early stopping>> \cite{goodfellow2016deep}. The work \cite{hardt2016train} initiated a lot of activity around overfitting properties of SGD applied to the empirical problems, see the survey in \cite{li2021improved}. In particular, for smooth convex (but not strongly convex!) problems in \cite{lin2016generalization} it was shown that
 \begin{center}
 $f(\bar{x}^T) - f(x^*) \propto   N^{-\eta/(1+\eta)}$ if $T\propto N^{2/(1 +\eta)}$, $\eta\in (0,1]$.
 \end{center}
 It means that too many iteration lead to overfitting.
For smooth strongly convex problems it was shown \cite{li2021improved} that 
\begin{center}
$f(\bar{x}^T) - f(x^*) \propto (\mu N)^{-1}$ if $T\propto (N/\mu)^2$,
\end{center}
which corresponds to \eqref{eq:GLSC2}. So in the strongly convex case we do not expect the early stopping effect (this effect was described above as an alternative to regularization) and overfitting effect.

More stronger overfitting effect can be observed if one replace SGD with Gradient Descent (GD) \cite{amir2021never,sekhari2021sgd}:
$$x^{k+1} = x^k - \tilde{\gamma}\nabla \bar{f}(x^k).$$
In particular, for smooth convex empirical loss minimization problems the better rate of convergence than
\begin{center}
$f(\bar{x}^T) - f(x^*) \propto  N^{-5/12}$
\end{center}
is impossible (without additional assumptions) independently of what is $T$ and $h$ \cite{sekhari2021sgd}. Remind that at the same assumptions for SGD we have $f(\bar{x}^T) - f(x^*) \propto N^{-1/2}$ if $T\propto N$. This rate is better, since $1/2 =6/12 > 5/12$.

Despite all \dm{of} this\dm{,} in practice one can often me\dm{e}t that \eqref{eq:ERM_SC} with proper regularization is solved by fast deterministic algorithms, say, LBFGS or even by using high-order schemes, see Section~\ref{Sec:Acc_Tens}. It works due to proper regularization!

\subsection{Distributed optimization}\label{Sec:distr_1}
In Section~\ref{Sec:Acc_Batch} we met with batch-parallelization consist in possibility to parallelize the batch calculation:
$$\frac{1}{r}\sum_{j=1}^r \nabla_x f(x,\xi^j).$$
If we assume that we have the number of nodes that is a division of $r$, then we can fully parallelize on these nodes batch calculation. But at each subsequent iteration of considered accelerated algorithm (after the batch calculation) all the nodes are required to share theirs sub-batches. In distributed optimization this is called -- communication. So one iteration assume one communication.  The natural question appears: does such number of communications is also optimal like the number of subsequent iterations? In general the answer is affirmative \cite{woodworth2021min}. It means that without additional assumptions batched accelerated methods are the best ones in Federated Learning (FL) setup from the theoretical point of view \cite{kairouz2021advances}. This conclusion looks somewhat discouraging since from the practice it is well known that local steps (the main ingredient of FL) works good. To explain this contradiction let us consider unconstrained convex quadratic stochastic optimization problem. An important property of accelerated dynamics is its linearity (on average) in terms of $x$. This linearity generates superposition principle: instead of communication at each iteration we can to run independently at each node accelerated algorithm with reduced (to the number of nodes) batch-size and we communicate only one time at the very end (at the last iteration) by calculating an average of the outputs at all the nodes (this procedure is called <<one shoot>>). The total output of this approach will have the same quality as the approach we started with \cite{woodworth2020is} (with many communications).

It means that for quadratic stochastic optimization problems (and close to quadratic ones) local steps give tangible benefits. Since quadratic problems are naturally appears as a local approximation of real problem in the vicinity of the solution or at each iteration as an inner problem (for example, iteration of Newton method \cite{woodworth2021stochastic}) we can still exploit local steps. One such example we consider at the very end of this section.

It is interesting to note, that rather than for deterministic distributed convex optimization problems for stochastic convex optimization problems there is a significant difference between the class of quadratic problems and convex ones \cite{nemirovski1983problem,woodworth2021min}.

More naturally distributed setup appears when dealing with offline approach. For example, if we have $m$ nodes (such that $N = m\cdot s$ for some natural $s$) we can rewrite the empirical loss minimization problem \eqref{eq:ERM_SC} as follows:
\begin{equation*}
  \min\limits_{x\in Q}  \left[\bar{f}(x):=\frac{1}{m}\sum_{k=1}^m \bar{f}_k(x):=\frac{1}{m}\sum_{k=1}^m \frac{1}{s} \sum_{j=1}^s\ f(x,\xi^{k,j})\right].
\end{equation*}
If we apply standard accelerated method \cite{nesterov2018lectures} assuming that $\bar{f}(x)$ is $\mu$-strongly convex in $2$-norm and has $L$-Lipscitz gradient, then the number of iterations (communications) will be $\tilde{\cO}\left(\sqrt{L/\mu}\right)$  (here and below in this section we skip all the logarithmic for for a better visibility) and the number of incremental gradient oracle calls at each node will be $\tilde{\cO}\left(s\sqrt{L/\mu}\right)$ (the number of $\nabla_x f(x,\xi^{k,j})$ calculation). 

Section~\ref{Sec:VR_short} gives a hope that this bound can be further improved due to the sum-type structure of the functions stored at each node. Indeed, there exist a distributed accelerated VR scheme \cite{li2020variance} with  $\tilde{\cO}\left(\sqrt{L/\mu}\right)$  communication complexity and $\tilde{\cO}\left(s + \sqrt{sL/\mu}\right)$  oracle complexity in each node, where $L$ in the last formula is a maximal Lipschitz gradient constant in $x$ in $2$-norm of functions $f(x,\xi^{k,j})$. This bound is optimal \cite{hendrikx2021optimal} if we do not use that $\left\{\xi^{k,j}\right\}$ are i.i.d. or do not use that among $\bar{f}_k(x)$ there exists some kind of similarity. In more details, if Lipschitz gradient constants of $\bar{f}(x) - \bar{f}_k(x)$ are bounded in $2$-norm by $l$ ($l\ll L$) than we may expect better communication complexity $\tilde{\cO}\left(\sqrt{l/\mu}\right)$, which corresponds to the lower bound under similarity \cite{arjevani2015communication}. 

To use similarity we describe \textbf{Accelerated gradient sliding} for unconstrained composite optimization problem:
$$\min\limits_{x\in\R^n}\left[\bar{f}(x):= g(x) + h(x)\right],$$
where $g(x)$ has $L_g$-Lipschitz continuous gradient, $h(x)$ is convex and has $L_h$-Lipschitz continuous gradient ($L_g \le L_h$); $\bar{f}(x)$ is $\mu$-strongly convex function in $2$-norm. Note that we do not assume $g(x)$ to be convex!
The algorithm looks as follows \cite{kovalev2022sliding}:
$$\tilde{x}^t = \tau x^t + (1-\tau) x_f^t,$$
\begin{equation}\label{eq:aux_prob}
    x_f^{t+1} \approx \argmin_{x \in \R^n}\left[ A^t(x) := g(\tilde{x}^t) + \langle\nabla g(\tilde{x}^t),x - \tilde{x}^t\rangle + L_g\|x - \tilde{x}^t\|_2^2 + h(x)\right],
\end{equation}
   which means
\begin{equation}\label{eq:criteria_aux}
   \|\nabla A^t(x_f^{t+1})\|_2^2 \leq  \frac{L_g^2}{3}\left\|\tilde{x}^t- \arg\min_{x \in \R^n} A^t(x)\right\|_2^2,
\end{equation} 
$$x^{t+1} = x^t + \eta\mu (x_f^{t+1}  - x^t)- \eta \nabla \bar{f}(x_f^{t+1}),$$

 where   
    $$\tau = \min\left\{1,\frac{\sqrt{\mu}}{2\sqrt{L_g}}\right\}, \quad  \eta = \min \left\{\frac{1}{2\mu},\frac{1}{2\sqrt{\mu L_g}}\right\}.$$
  This algorithm (with output point $x^N$) has an iteration complexity 
  $$\tilde{\cO}\left( \sqrt{\frac{L_g}{\mu}}\right)$$
and solves several tasks at once:
 \begin{itemize}
     \item \textbf{(simple acceleration)} If $h(x)\equiv 0$ this algorithm becomes an ordinary accelerated method with 
     $$x_f^{t+1} = \tilde{x}^t - \frac{1}{2L_p}\nabla g(\tilde{x}^t);$$
     \item \textbf{(Catalyst)} If $g(x)\equiv 0$ this algorithm becomes a Catalyst-type proximal envelop \cite{lin2015universal}, but less sensitive to the accuracy of the solution of \eqref{eq:aux_prob};\footnote{From Catalyst technique one can obtain \eqref{eq:VR_rate} based on \eqref{eq:VR_var}, restarts (see the proof of Theorem~\ref{Th:r_online}) and accelerated batched algorithm, see Section~\ref{Sec:Acc_Batch}. Note also the paper \cite{carmon2022recapp}, where the authors independently proposed stable version (to the accuracy of the solution of \eqref{eq:aux_prob}) of Catalyst. Both of these versions are <<logarithmic-free>> (do not introduce additional logarithmic multipliers compared to direct acceleration), rather than initial one \cite{lin2015universal}.}
     \item \textbf{(Sliding)} If we apply to \eqref{eq:aux_prob} Accelerated gradient sliding with $g(x):=h(x)$ and $h(x):= g(\tilde{x}^t) + \langle\nabla g(\tilde{x}^t),x - \tilde{x}^t\rangle + L_g\|x - \tilde{x}^t\|_2^2$ then obtain the total complexity of $\nabla h(x)$ oracle as $$\tilde{\cO}\left( \sqrt{\frac{L_g}{\mu}}\right)\cdot\cO\left( \sqrt{\frac{L_g + L_h}{L_g}}\right) = \tilde{\cO}\left( \sqrt{\frac{L_h}{\mu}}\right).$$
    That is, we have split the complexity of considered composite problem to the complexities correspond to the separate problems:
    $$\tilde{\cO}\left( \sqrt{\frac{L_g}{\mu}}\right) \quad \text{for~~}\#\nabla g(x) \quad \text{and}\quad \tilde{\cO}\left( \sqrt{\frac{L_h}{\mu}}\right)\quad \text{for~~}\#\nabla h(x).$$ 
 \end{itemize}
Let us rewrite the empirical problem as follows
\begin{equation}\label{eq:simil_trick}
    \min\limits_{x\in\R^n}\left[\bar{f}(x):= \ \left(\bar{f}(x) - \bar{f}_1(x)\right) +  \bar{f}_1(x) \right].
\end{equation}
Denoting the first sum as $g(x)$ and the second one as $h(x)$ we can use Sliding trick to split the complexities. Note that we significantly use the fact that in this scheme $g(x)$ is not necessarily convex!  So it remains only to notice that described Accelerated gradient sliding under this choice of $g(x)$ and $h(x)$ has a natural distributed interpretation.\footnote{Indeed, we can assign the node number 1 to be a master node that minimize at each iteration  \eqref{eq:aux_prob} with $g(x):=\bar{f}(x) - \bar{f}_1(x)$ and $h(x):=\bar{f}_1(x)$. It is obvious, that $h(x)$ is available to the master node and $\nabla g(\tilde{x}^t)$ can be available due to communications of the master node with the other ones. At each round of communications $k$-th node sends $\nabla \bar{f}_k(\tilde{x}^t)$ to the master node and receive in return $x_f^{t+1}$, which is calculated at the master node.}  It gives at the end a distributed algorithm that works according to the lower bounds for communications and oracle calls per node complexities under similarity \cite{arjevani2015communication}. Due to the statistical (i.i.d.) nature of $\left\{\xi^{k,j}\right\}$ (statistical similarity) one may expect that \cite{hendrikx2020statistically}: $L_g \propto s^{-1/2}$ or even $L_g \propto s^{-1}$ in some special cases. 

Thus, the number of communications for the developed algorithm is proportional to $\propto \sqrt{1/\sqrt{s}\mu}$ and the number of incremental gradient oracle calls at each node remains the same as for ordinary accelerated method $\tilde{\cO}\left(s\sqrt{L/\mu}\right)$. It means that we indeed improve the communication complexity by using statistical similarity. But at the same time we have worsened the oracle complexity per node in comparison with VR accelerated method, which uses sum-type structure of the terms stored in each nodes. An open question is to build an <<intermediate>> algorithm -- some kind of convex combination of VR and Sliding with statistical similarity. The parameter of this convex combination is determined in practice by the ratio of arithmetic complexities of one oracle call to one communication. 

Note that in \eqref{eq:simil_trick} instead of $\bar{f}_1(x)$
we can take an arbitrary smooth convex functions. In particular we can take Taylor series expansion 
$$\tilde{f}_1(x):= \bar{f}_1(\tilde{x}^t) + \langle \nabla \bar{f}_1(\tilde{x}^t), x - x^t \rangle + \frac{1}{2!}\langle \nabla^2 \bar{f}_1(\tilde{x}^t)(x - \tilde{x}^t ), x - x^t \rangle.$$
Note that $\tilde{f}_1(x)$ -- convex function, rather than $\bar{f}(x) - \tilde{f}_1(x)$. Under the third-order smoothness assumption one may expect that
$\tilde{f}_1(x)$ has a close hessian to the hessian of $\bar{f}_1(x)$ in the vicinity of $\tilde{x}^t$. Thus we may expect this method to be required only few communication steps
 when the number of iteration $t$ is large.
Note that in this approach we not only have similarity on higher iterations, but also have a quadratic structure for auxiliary problem  \eqref{eq:aux_prob}. In case of stochastic (randomized) oracle this structure allows to use accelerated one-shoot local methods for \eqref{eq:aux_prob}, which strength the effect of communications saving. 

In this section we consider distributed centralized algorithms. Some of the results mentioned above have analogues also in decentralized setup, see \cite{gorbunov2020recent} and references there in.

\subsection{Accelerated tensor methods}\label{Sec:Acc_Tens}
Starting with the work \cite{nesterov2006cubic} the interest in tensor methods (i.e. the methods that used high-order derivatives) in convex optimization began to grow steadily. In particular, an optimal\footnote{See \cite{kornowski2020high,garg2021near} and references there in for lower bounds.}  (up to a logarithmic complexity factor for line-search procedure) second-order method was proposed in \cite{monteiro2013accelerated} and an optimal (also, up to a logarithmic factor) high-order method was proposed in \cite{gasnikov2019near}. In \cite{nesterov2021implementable} it was shown that second and third-order tensor methods are implementable -- complexity of each iteration is roughly the same as for Newton method. Optimal methods without line-search (that work according to the lower bounds up to a constant factor) were recently proposed in \cite{kovalev2022first,carmon2022optimal} and \cite{chen2025solvingconvexconcaveproblemstildemathcaloepsilon47} for saddle-point problems. Thus the deterministic theory of tensor methods for convex (unconstrained) problems seems to be close to the final point. In Section~\ref{Sec:Acc_Batch} we have demonstrated the profit of acceleration in online approach for smooth problems. Fortunately, we can additionally improve the results of Section~\ref{Sec:Acc_Batch} by using accelerated tensor methods. For that we need to develop sensitivity analysis of these methods. Such an analysis was made in \cite{agafonov2024inexact} for accelerated tensor methods according to Nesterov-type of acceleration under high-order smoothness assumption \cite{nesterov2021implementable}. This acceleration is a little bit worse than the best one Monteiro--Svaiter acceleration \cite{monteiro2013accelerated,gasnikov2019near,kovalev2022first}. By using the results of \cite{agafonov2024inexact} and batching technique one can improve the number of subsequent iterations in online approach from Section~\ref{Sec:Acc_Batch} \cite{agafonovadvancing}. If $n$ is not too big then such improvement can be valuable also in terms of arithmetic complexity. 

For offline approach the main motivation to use tensor methods is coming from the similarity approach, see Section~\ref{Sec:distr_1}. Where the reduced auxiliary problem \eqref{eq:aux_prob} 
$$\min\limits_{x\in\R^n}\left[\langle \nabla\bar{f}(\tilde{x}^t) - \nabla\bar{f}_1(\tilde{x}^t), x - \tilde{x}^t \rangle + l\|x-\tilde{x}^t\|_2^2 + \frac{1}{s} \sum_{j=1}^s\ f(x,\xi^{1,j})\right]$$
is a sum type problem with the reduced number of terms $s$ ($s \ll N$). If  $s\simeq n$ we have that for Newton-type methods the complexity of one iteration is upper bounded by the Hessian-matrix inversion, rather than the complexity of Hessian calculation by itself. In other words, in this case second and third-order tensor methods do not <<feel>> the sum-type structure of the problem and work with almost the same complexities as if $s = 1$. This idea reduces the number of subsequent iterations of second and  third-order methods for inner (auxiliary) problems and simultaneously alleviates the main drawback of tensor methods related with expansive iterations \cite{dvurechensky2021hyperfast}.\footnote{Since we have to calculate the sum the iteration must be expensive independently of the order of the method we use. This observation opens up the possibility to increase the order of the method by conserving the complexity of iteration.}

\subsection{Saddle-point problems and variational inequalities}
Offline approach to the stochastic Saddle-point problems (SPP) developed in \cite{lei2021stability,zhang2021generalization,dvinskikh2022relations,ozdaglar2022good}, see also \cite{kovalev2022VR} for distributed approach. 
Online approach to the stochastic Variational Inequalities (VI) -- and as a consequence for saddle-point problems -- developed in
\cite{juditsky2011solving,gorbunov2022stochastic,gorbunov2022clipped,beznosikov2023smooth}.

Roughly speaking, all the results for both of the approaches look very similar to the results mentioned in the previous sections for the stochastic optimization problems except absence of acceleration. But there still exist open problems for SPP and VI that were closed for optimization problems. For example, randomized VR algorithms for (strongly) convex problems match the lower complexity bound (see Section~\ref{Sec:VR_short}), rather than its SPP and VI analogues \cite{alacaoglu2021stochastic,han2021lower}, see also \cite{pichugin2024method} for non-euclidean prox setup.

\subsection{Wasserstein barycenter example}
Wasserstein barycenter (WB) problem and its dual entropy-smoothing version is an extremely interesting example in many ways at once. First of all, stochastic optimization (population) WB problem formulation  comes from Statistics, but is not due to the principle of maximum likelihood
\cite{boissard2015distribution,bigot2018characterization}. So we may consider this example to be intermediate in terms of Section~\ref{Sec:StatisticsMotiv} and Section~\ref{sec:ML}. Secondly, the empirical WB problem as a convex optimization problem has an efficient saddle-point and dual representations \cite{dvinskikh2021decentralized}. For example, when WB problem solved on the space of finite-support measures (on $n$ points) the complexity of primal gradient oracle is $\tilde{\cO}\left(n^3\right)$ a.o. (arithmetic operations) and the complexity of dual gradient oracle is $\cO\left(n^2\right)$ a.o. Moreover, dual gradient oracle has a natural stochastic unbiased estimation with the complexity $\cO\left(n\right)$ a.o.
For some real-world applications $n \simeq 10^6$. Hence mentioned above computational observations play an important role \cite{dvinskikh2021decentralized}. Thirdly, the possibility to use dual oracle appears only in offline approach. To make this approach correct we need proper regularization \cite{dvinskikh2021stochastic}, see also Theorem~\ref{Th:conv_reg} for \dm{the} \dm{E}uclidean case. This regularization should be non-\dm{E}uclidean, since we have simplex constraint -- barycenter is a measure, that is an element of probability simplex $S_n(1)$. Fourthly, WB problem is non-smooth, but strongly convex in $2$-norm on $S_n(1)$ if we consider dual entropy-smoothing version \cite{dvinskikh2021decentralized}. Since the problem is non-smooth it is impossible to use batch-parallelization in online approach, see Section~\ref{Sec:Acc_Batch}. But due to the strong convexity (comes from regularization or/and from dual entropy-smoothing) the dual problem (in offline approach) is smooth \cite{rockafellar1970convex} and we can apply distributed (batched-parallelized) accelerated methods to solve it \cite{dvinskikh2021decentralized}. To conclude, WB problem is an interesting example of the problem for which offline approach motivated not only the ability to distribute calculations across nodes (what is typical of the offline approach in general), but also the possibility to solve dual problem with better properties: cheaper oracle and better iteration-complexities bounds, since smoothness without strong convexity (for dual problem) is better than strong convexity without smoothness (for primal problem). 

With this remark, we wanted to demonstrate that despite the huge progress made in the last decade in convex stochastic programming, there are still a lot of open problems that looks like a minor generalization of already solved ones. Apparently, solving such problems will require the involvement of new ideas.
\section{Historical Notes}
Stochastic optimization has began to take shape in an independent field of knowledge for about 70 years ago starting with the seminal paper of H. Robbins and S. Monro \cite{robbins1951stochastic}. This field was actively developed along with the usual optimization. In particular, in an outstanding  book of A.S. Nemirovski and D.B. Yudin \cite{nemirovski1983problem} (original version of the book was dated by 1979) the complexity theory of modern convex optimization was build. This theory included stochastic gradient oracle. So we may consider 1979 as a second (theoretical) birth of stochastic optimization. The third significant wave of the interest happened for about 20 years ago in accordance with Data Science applications. It is already impossible to imagine modern data analysis without stochastic optimization. For the moment many books were written around Stochastic optimization \cite{ermoliev1988numerical,birge2011introduction,schneider2007stochastic,prekopa2013stochastic,spall2012stochastic,shapiro2021lectures}. In some books and surveys one can find Data Science applications of Stochastic Optimization \cite{fomin1984recurent,sra2012optimization,sridharan2012learning,shalev2014understanding,rakhlin2014statistical,wright2016optimization,duchi2018introductory,bottou2018optimization,bach2021learning,wright2022optimization}.

The results of Section~\ref{Sec:StatisticsMotiv} are rather standard and can be mainly find in \cite{ibragimov2013statistical,spokoiny2015basics}. An example of Vadim V. Mottl was taken from \cite{krasotkina2015bayesian}. Non-asymptotic results can be found in \cite{spokoiny2012parametric,spokoiny2013bernstein}. Polyak--Juditsky--Ruppert averaging was separately proposed  in \cite{ruppert1988efficient} and \cite{polyak1990new,polyak1992acceleration}. Online analogue of
Theorem~\ref{Th:leCam1}
was developed in \cite{polyak1979adaptive,polyak1980optimal}. 

The results of Section~\ref{sec:ML} were motivated by the papers \cite{kakade2008generalization,sridharan2008fast,shalev2009stochastic,shalev2010learnability}. Online learning is well presented in \cite{cesa2006prediction,hazan2016introduction,orabona2019modern,cesa2021online}. Note that for the convex case (not strongly convex) the described results can be generalized to non-euclidean proximal setup. The most interesting applications related with unit simplex $Q=S_n(1)$ \cite{cesa2006prediction}. Note that in this section we started to use the notion of (unbiased) stochastic subgradient $\nabla_x f(x,\xi)$ without accurate definition of this subject in the non-smooth case. The problems appear when the subgradient is not unique. In this case we understand under $\nabla_x f(x,\xi)$ some kind of measurable selector (no matter what kind of selector). More accurate definitions and properties of stochastic gradient one can find in \cite{shapiro2021lectures}.

The results of Section~\ref{Sec:NonConv_Conv} were taken from \cite{shapiro2005complexity,nemirovski2009robust,shapiro2021lectures}.
The tight lower bound for online case was obtained in \cite{agarwal2012information}. The tight lower bound for offline case (for smooth convex problems) was obtained in \cite{feldman2016generalization}.

Online results of Section~\ref{Sec:SCR} corresponds to \cite{juditsky2014deterministic}. Offline results of Section~\ref{Sec:SCR} corresponds to \cite{shalev2009stochastic,shalev2014understanding}. High-probability bounds investigated in \cite{feldman2019high,klochkov2021stability}. Tikhonov's regularization was accurately developed in \cite{tikhonov1977solutions}.
For non-euclidean case offline results were generalized in \cite{dvinskikh2021stochastic,dvinskikh2022relations}.

Online results of Section~\ref{Sec:rgrwt} were taken from \cite{shapiro2005complexity,shapiro2021lectures}. Offline results of Section~\ref{Sec:rgrwt} were taken from \cite{juditsky2014deterministic} for the case $s=2$ ($s$ is growth parameter). For the case $s=1$ (sharp minimum \cite{poljak1982sharp}) this result was obtained earlier in a different manner \cite{juditsky1993stochastic}. The idea of restarts for strongly convex problems goes back to \cite{nemirovski1983problem,nemirovski1985optimal}. For the stochastic optimization problems it was developed in \cite{juditsky2014deterministic}. For a sharp minimum and deterministic optimization convex optimization problems restarts was developed in \cite{roulet2017sharpness}.

\end{fulltext}

\bibliographystyle{abbrvnat}
\bibliography{Chapter_1/ch1_refs}

\end{document}